\newcommand{\Rmnum}[1]{\expandafter\@slowromancap\romannumeral#1@}
\newtheorem{lemma}{Lemma}[section]
\newtheorem{theorem}{Theorem}[section]
\numberwithin{equation}{section}
\DeclareMathOperator{\dive}{div}
\title[Compressible Navier-Stokes-Poisson System]
{Stability of planar shock wave for the 3-dimensional compressible Navier-Stokes-Poisson  equations}%
\author[X. Wu]{}
\email{xcwu22@csu.edu.cn}
\subjclass[2000]{35Q35, 35C07, 35B35, 35B40.} 
 \keywords{Compressible Navier-Stokes-Poisson equations, viscous shock waves, stability, decay rate}
\begin{document}

\maketitle \centerline{\scshape  Xiaochun Wu}
\medskip
{\footnotesize
   \centerline{$\ \!$School of Mathematics and Statistics, HNP-LAMA, Central South University }
\centerline{Changsha 410083, China}
}

\medskip
\bigskip
\begin{abstract}
This paper is concerned with the stability of planar viscous shock wave for the 3-dimensional compressible Navier-Stokes-Poisson (NSP) system in the domain $\Omega:=\mathbb{R}\times \mathbb{T}^2$ with $\mathbb{T}^2=(\mathbb{R}/\mathbb{Z})^2$. The stability problem of viscous shock under small 1-dimensional perturbations was solved in Duan-Liu-Zhang \cite{Duan-Liu-Zhang}. In this paper, we prove the viscous shock is still stable under small 3-d perturbations. 
Firstly, we decompose the perturbation into the zero mode and non-zero mode. Then we can show that both the perturbation and zero-mode time-asymptotically tend to zero by the anti-derivative technique and crucial estimates on the zero-mode. Moreover, we can further prove that  the non-zero mode tends to zero with exponential decay rate. The key point is to estimate the non-zero mode of nonlinear terms involving electronic potential, see Lemma \ref{L:6.1} below.
\end{abstract}

\section{Introduction}
\subsection{Background}

The 3-d two fluid Navier-Stokes-Poisson system is a typical model describing the transport of charged particles affected by the self-consistent electrostatic potential force in collisional dusty plasma, which is written as
\begin{align}\label{1}
	\left\{\begin{array}{l}
		\partial_t \rho_i+\dive\left(\rho_i {\bf u}_i\right)=0, \\
	 m_i\rho_i\left(\partial_t {\bf u}_i+{\bf u}_i\cdot \nabla {\bf u}_i\right)+T_i\nabla \rho_i=\mu_i \Delta {\bf u}_i+\rho_i \nabla E, \\
		\partial_t \rho_e+\dive\left(\rho_e {\bf u}_e\right)=0, \\
	 m_e\rho_e\left(\partial_t {\bf u}_e+{\bf u}_e\cdot \nabla {\bf u}_e\right)+T_e\nabla \rho_e=\mu_e \Delta {\bf u}_e-\rho_e \nabla E,  \\
		\lambda^2\Delta E=\rho_i-\rho_e.
	\end{array}\right.
\end{align}
Here, the variables $\rho_\alpha=\rho_\alpha(x, t): \Omega\times \mathbb{R}^+  \rightarrow \mathbb{R}^+$ and ${\bf u}_\alpha={\bf u}
_\alpha(x, t): \Omega\times \mathbb{R}^+  \rightarrow \mathbb{R}^3$ denote the density and velocity, respectively, of ions $(\alpha=i)$ and electrons $(\alpha=e)$, while $E=E(x, t)$ refers to the self-consistent potential. The constants $m_\alpha, T_\alpha$ and $\mu_\alpha$ are used to represent the mass, absolute temperature, and viscosity coefficient of the fluid of ions $(\alpha=i)$ and electrons $(\alpha=e)$, as well as $\lambda>0$ stands for Debye length. 

 Since the electrons in plasma approach equilibrium faster than the heavier ions, cf. \cite{Chen, Krall},
we consider the case of single ions flow under the Boltzmann relation. This model exhibits many interesting phenomena, such as traveling waves \cite{Kundu-Ghosh-Chatterjee-Das}, which have also attracted  considerable attentions.
As shown in \cite{ Guo-Pausader}, the Boltzmann relation $\rho_e=e^{-\frac{E}{T_e}}$ can be formally derived from the two-fluid model by assuming a zero velocity for the electrons. And then  
the 3-d normalized NSP system for ions reads (dropping the subscript $i$)
\begin{align}\label{E:1.1}
	 \left\{\begin{array}{l}
	 \partial_t \rho+\operatorname{div} \bf m=0, \\
	 \partial_t {\bf m}+\operatorname{div}\left(\frac{\bf m \otimes \bf m}{\rho}\right)+T\nabla \rho=\mu \Delta {\bf u}-\rho \nabla E, \quad x\in \Omega, \,t>0, \\
	 -\lambda^2 \Delta E=\rho-e^E,
		\end{array}\right.
	\end{align}
where the unknown variable ${\bf m}=\rho{\bf u}$ is the momentum. The initial values are given by 
\begin{align}\label{E:1.1b}
	\begin{cases}
		\rho(x,0)=\rho_0(x)\rightarrow \rho_{ \pm}>0 \quad \mbox{as} \quad x_1\rightarrow \pm \infty,\\
		{\bf m}(x,0)={\bf m}_0(x)\rightarrow (m_{1\pm}, 0, 0)^T \quad \mbox{as} \quad x_1\rightarrow \pm \infty,
	\end{cases}
\end{align} 
and the far-field data of $E$ is given by 
\begin{equation}\label{E:1.12b}
	\lim\limits_{x_1\rightarrow \pm \infty}E(x, t)=E_{\pm}
\end{equation}
under the quasi-neutral condition $E_{\pm}=\ln \rho_{ \pm}$ at $x_1\rightarrow \pm \infty$. In this paper, we are concerned with the Cauchy problem of 3-d NSP system \eqref{E:1.1} in 
 $x=(x_1, x_2, x_3)^T\in \Omega:=\mathbb{R}\times \mathbb{T}^2$ with $\mathbb{T}^2=(\mathbb{R}/\mathbb{Z})^2$. Without loss of generality, we assume that $\mu=\lambda=1$ throughout the paper.


 For the  multi-dimensional compressible NSP system \eqref{E:1.1}, 
Hao-Li \cite{Hao-Li} proved the global well-posedness of the smooth solution for the Cauchy problem of NSP in the Besov type space. 
Then, Li et al. \cite{Li-Matsumura-Zhang} not only provided the well-posedness in Sobolev space, but also described its asymptotic behavior. 
Indeed, they showed the smooth solution converges to its constant equilibrium state and obtained the optimal decay rate. Later, Zhang et al. \cite{Zhang-Li-Zhu} extended the result to the case of non-isentropic NSP system. For the works related to weak solution, we refer to 
\cite{Chae, D-F-P-S, Zhang-Tan, Kobayashi-Suzuki} and references therein.
For the two-fluid NSP system \eqref{1}, Li et al. \cite{Li-Yang-Zou} and Hsiao et al. \cite{Hsiao-Li-Yang-Zou} obtained the optimal decay rate for the isentropic and non-isentropic cases respectively.
More works related to the one-fluid/two-fluid NSP system, see \cite{Hong-Shi-Wang, Donatelli-Marcati, Wang-Wu, Tan-Yang-Zhao-Zou, Cai-Tan, Tan-Wang-Wang,  Jiang-Lai-Yin-Zhu, Cui-Gao-Yin-Zhang, Jang-Tice, Liu-Luo-Zhong}.

Note that the asymptotic profile mentioned above of multi-dimensional NSP system most is the stationary state, next we introduce some nonlinear stability of wave patterns for 1-dimensional. Duan-Yang \cite{Duan-Yang} firstly proved the stability of rarefaction wave and boundary layer for outflow problem of the two-fluid NSP system. 
Then, Li-Zhu \cite{Li-Zhu} investigated the outflow problem of NSP system for ions and showed the asymptotic stability, toward a nonlinear wave which is the superposition of a stationary solution and a rarefaction wave. 
 However, to the best of our knowledge, so far there are few mathematical results of the nonlinear stability of wave patterns for multi-dimensional NSP system. This is exactly the motivation of our paper.

\subsection{The problem and main Result}

As it well-known, the technical obstacle in the shock theory for multiple dimensions is the negative sign of $(\tilde u_1^s)^{\prime}<0$, which can not been overcome due to the loss of the anti-derivatives method. Recently, Yuan \cite{Yuan} introduced a decomposition idea: for any function $f\in L^{\infty}(\Omega)$ that is periodic in $x^{\prime}=(x_2, x_3)\in \mathbb T^2$, it can be decomposed into the 1-dimensional zero mode $\bar f$ and the multi-dimensional non-zero mode $f^{\neq}$ 
and still made a good use of the anti-derivative technique for multi-dimensional case in $\Omega=\mathbb R \times \mathbb T^2$, where 
\begin{align}\label{E:1.18}
\bar f(x_1):=\int_{{\mathbb T}^2}f(x_1, x^{\prime})dx^{\prime} \quad \mbox{and}\quad f^{\neq}(x):=f(x)-\bar f(x_1).
\end{align}
  Motivated by this, we will use the above decomposition to consider the stability of planar shock wave to 3-d NSP system.
  
For 1-d NSP system, \eqref{E:1.1}-\eqref{E:1.12b} can be replaced by 
\begin{align}\label{E:1.1c}
	\left\{\begin{array}{l}
		\partial_t\tilde \rho+\tilde m_{1x_1}=0,\\
		\tilde m_{1t}+\left(\frac{\tilde m_1^2}{\tilde \rho}\right)_{x_1}+T\tilde \rho_{x_1}=\tilde u_{1x_1x_1}-\tilde \rho \tilde E_{x_1},\\
		-\tilde E_{x_1x_1}=\tilde \rho-e^{\tilde E}
	\end{array}\right.
\end{align}
with 
\begin{align}\label{E:1.2c}
	\begin{cases}
		\tilde \rho(x_1,0)=\tilde \rho_0(x_1)\rightarrow \rho_{ \pm}>0 \quad \mbox{as} \quad x_1\rightarrow \pm \infty,\\
		\tilde m_1(x_1,0)=\tilde m_{10}(x_1)\rightarrow m_{1\pm} \quad \mbox{as} \quad x_1\rightarrow \pm \infty
	\end{cases}
\end{align}
and 
\begin{equation}\label{E:1.3cc}
	\lim\limits_{x_1\rightarrow \pm \infty}\tilde E(x_1, t)=E_{\pm}.
\end{equation}
As shown in \cite{Duan-Liu}, the two characteristics of the corresponding quasineutral Euler system of \eqref{E:1.1c}-\eqref{E:1.3cc} are 
\begin{align*}
	\lambda_{\pm}(\rho, u_1)=u_1\pm \sqrt{T+1} \quad \mbox{with} \quad u_1=\frac{ m_1}{ \rho}.
\end{align*}
 When the small-amplitude $|\rho_{-}-\rho_{+}|\ll 1$,  Duan et al. \cite{Duan-Liu-Zhang} showed that if the constants $[\rho_{ \pm}, u_{1\pm}]$ satisfy the Rankine-Hugoniot conditions
 \begin{align}\label{E:1.9}
 	\left\{\begin{array}{l}
 		-s\left(\rho_{+}-\rho_{-}\right)+\rho_+u_{1+} -\rho_-u_{1-}=0, \\
 		-s\left(\rho_+u_{1+} -\rho_-u_{1-}\right)+ \rho_+u_{1+}^2- \rho_-u_{1-}^2+(T+1)\left(\rho_{+}-\rho_{-}\right)=0,
 	\end{array}\right.
 \end{align}
 and the Lax shock condition (corresponding the second family, namely, 2-shock)
 \begin{align}\label{E:1.10}
 	\lambda_+(\rho_{+}, u_{1+})<s<	\lambda_+(\rho_{-}, u_{1-})\quad \mbox{and}\quad s>\lambda_-(\rho_{-}, u_{1-}),
 \end{align}
 then \eqref{E:1.1c}-\eqref{E:1.3cc} admits a  viscous 2-shock wave solution $(\tilde \rho, \tilde m_1, \tilde E)(x_1, t):=(\tilde \rho^s, \tilde m_1^s, \tilde E^s)(\xi)$ with $\xi=x_1-st$ satisfying 
\begin{equation*}
	\lim\limits_{x_1\rightarrow \pm \infty} (\tilde \rho, \tilde m_1, \tilde E)(x_1, t)=(\rho_{ \pm}, m_{1\pm}, E_{\pm}).
\end{equation*} 
Then it follows from \eqref{E:1.9}-\eqref{E:1.10} (see \cite{Smoller}) that 
\[\rho_->\rho_+ \quad \mbox{and}\quad u_{1-}>u_{1+}.\]
Define $\tilde {\bf m}(x,t)=(\tilde m_1(x_1,t), 0, 0)^T$, then such $(\tilde \rho,\tilde {\bf m},\tilde E)(x,t)$ are called the planar shock wave to the Cauchy problem of the 3-d system \eqref{E:1.1}-\eqref{E:1.12b}.


Define 
\begin{align}\label{E:1.7}
	(z_0, {\bf r}_0)(x)=(z_0, (r_{10}, r_{20}, r_{30})^T)(x):=(\rho-\tilde \rho, {\bf m}-\tilde {\bf m})(x, 0)
\end{align}
and 
\begin{align}\label{E:1.8}
	(Z_0, R_0)(x_1)=\int_{-\infty}^{x_1}\int_{\mathbb{T}^2}(z_0, r_{10})(y_1, x^{\prime})dx^{\prime}dy_1, \quad \quad x_1\in \mathbb{R}.
	\end{align}
 Then, we state the main theorem.
\begin{theorem}\label{theorem1.1}
	Assume that \eqref{E:1.9}-\eqref{E:1.10} hold. Then there exist $\delta>0$ and $\nu_1>0$ such that if $|\rho_{-}-\rho_{+}|\leq \delta$ and \\
	1. the perturbation $(z_0, {\bf r}_0)$ is periodic in the the transverse variables $x^{\prime}$ on $\mathbb{T}^2$; moreover, the pairs \eqref{E:1.7} and \eqref{E:1.8} satisfy that 
	\begin{equation}\label{zxx}
		\nu_0:=\|z_0, {\bf r}_0\|_{H^4(\Omega)}+\|Z_0, R_0\|_{L^2(\mathbb{R})}\leq \nu_1;
	\end{equation}
2. the following zero-mass type condition holds:
\begin{equation}\label{zc}
\int_{\Omega} z_0 d x=0 \quad \text { and } \quad \int_{\Omega} {\bf r}_0 d x={\bf 0}.
\end{equation}
Then the Cauchy problem \eqref{E:1.1}-\eqref{E:1.12b} admits a unique classical solution $(\rho, {\bf m}, E)(x,t)$ globally in time, which is periodic in $x^{\prime} \in \mathbb{T}^2$ and satisfies that
\begin{align}\label{E:1.14}
	(\rho, {\bf m})-(\tilde \rho, \tilde {\bf m}) \in C(0, +\infty; H^4(\Omega)), \quad E_t-\tilde E_t\in C(0, +\infty; H^5(\Omega))\quad \mbox{and}\quad E-\tilde E \in C(0, +\infty; H^6(\Omega)).
\end{align}
Moreover, it holds that 
\begin{align}\label{E:1.15}
	\|(\rho, {\bf m})-(\tilde \rho, \tilde {\bf m})\|_{W^{2, \infty}(\mathbb{R}^3)}+\|E_t-\tilde E_t\|_{W^{3, \infty}(\mathbb{R}^3)}+\|E-\tilde E\|_{W^{4, \infty}(\mathbb{R}^3)}\rightarrow 0 \quad \mbox{as} \quad t\rightarrow +\infty;
\end{align}
and the non-zero mode of the solution, 
\begin{align*}
	(\rho^{\neq}, {\bf m}^{\neq}, E^{\neq}, E_t^{\neq}):=(\rho, {\bf m}, E, E_t)(x,t)-\int_{\mathbb{T}^2} (\rho, {\bf m}, E, E_t)(x_1, x^{\prime}, t)d x^{\prime}, \quad x\in \Omega, \, t>0
\end{align*}
satisfies that 
\begin{align}\label{E:1.17}
	\|(\rho^{\neq}, {\bf m}^{\neq})(\cdot, t)\|_{W^{1, \infty}(\mathbb{R}^3)}+\|E_t^{\neq}(\cdot, t)\|_{W^{2, \infty}(\mathbb{R}^3)}+\|E^{\neq}(\cdot, t)\|_{W^{3, \infty}(\mathbb{R}^3)}\leq C\nu_0 e^{-ct}, \quad  \forall \,t>0
\end{align}
with some constants $C>0$ and $c>0$.
\end{theorem}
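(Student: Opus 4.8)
The plan is to work in the shock-fixed frame $\xi = x_1 - st$, introduce the perturbation $(z,\mathbf r,\chi) := (\rho,\mathbf m, E) - (\tilde\rho,\tilde{\mathbf m},\tilde E)$, and run a continuation argument resting on a closed a priori $H^4$-estimate together with a local existence result (the latter being standard). The first preparatory step is to control the electrostatic potential by the hydrodynamic unknowns. Writing the Poisson equation for the perturbation as $-\Delta\chi = z - (e^{\tilde E+\chi}-e^{\tilde E})$ and isolating the linear part $e^{\tilde E}\chi$, one obtains the coercive elliptic problem $-\Delta\chi + e^{\tilde E}\chi = z + N(\chi)$ with $N(\chi)=O(\chi^2)$ and $e^{\tilde E}>0$ bounded below. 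Elliptic regularity on $\Omega=\mathbb R\times\mathbb T^2$ then yields $\|\chi\|_{H^{s+2}}\lesssim \|z\|_{H^s}+\text{(higher order)}$; differentiating the equation in $t$ gives the analogous estimate for $\chi_t$ with one fewer derivative. This accounts simultaneously for the two-derivative gain of $E-\tilde E$ and the one-derivative gain of $E_t-\tilde E_t$ asserted in \eqref{E:1.14}, and lets me treat $\chi$ and $\chi_t$ as slaved to $(z,\mathbf r)$ throughout.

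Next I would split every quantity into its zero mode $\bar f(x_1)=\int_{\mathbb T^2}f\,dx'$ and non-zero mode $f^{\neq}=f-\bar f$ via \eqref{E:1.18}, and estimate the two pieces by genuinely different mechanisms. For the \emph{zero mode}, averaging the perturbed system over $\mathbb T^2$ produces an effectively one-dimensional system for $(\bar z,\bar r_1)$ to which the classical anti-derivative method applies: the zero-mass condition \eqref{zc} forces $\int_{\mathbb R}\bar z_0\,dx_1=0$, so the anti-derivatives $(\Phi,\Psi)$, whose initial data are exactly $(Z_0,R_0)$ from \eqref{E:1.8}, are well defined and solve a viscous system whose linearization around the $2$-shock is dissipative. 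Here the compressive structure $\tilde\rho_\xi<0$, $(\tilde u_1^s)'<0$, which obstructs the anti-derivative method for the full multidimensional problem, is harmless because it is used in its original one-dimensional setting; a weighted energy estimate in the spirit of Matsumura--Nishihara and Goodman then gives the time-asymptotic decay of $(\bar z,\bar r_1,\bar\chi)$ to zero.

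For the \emph{non-zero mode} I would close a separate $H^4$ energy estimate and upgrade the viscous dissipation to coercivity via the Poincaré inequality on the torus: since $\int_{\mathbb T^2}f^{\neq}\,dx'=0$, the transverse Laplacian has a spectral gap and $\|f^{\neq}\|_{L^2}^2\le c^{-1}\|\nabla_{x'}f^{\neq}\|_{L^2}^2$, so the dissipation furnished by $\Delta\mathbf u$ controls the undifferentiated non-zero mode; a Gronwall argument then produces the \emph{exponential} rate $e^{-ct}$ in \eqref{E:1.17}, in contrast to the zero mode, for which the anti-derivative method yields convergence without an explicit rate. The main obstacle is the nonlinear Poisson coupling: although the full potential perturbation $\chi$ is controlled elliptically by $z$, the non-zero mode $(e^E)^{\neq}$ of the electronic term mixes zero and non-zero modes (schematically $(e^E)^{\neq}\sim e^{\bar E}\chi^{\neq}+\text{products}$), and in the non-zero-mode energy estimate one must bound it, with its derivatives, by the non-zero-mode norms $\|\chi^{\neq}\|_{H^s}$ and $\|z^{\neq}\|_{H^s}$ alone, without picking up a zero-mode factor that survives as $t\to\infty$. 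This is precisely the content of Lemma \ref{L:6.1}, on which the whole exponential-decay argument rests.

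Finally I would assemble the two families of estimates into a single a priori bound, close the continuation argument to obtain the global classical solution with the regularity \eqref{E:1.14}, and convert the $H^4$-type decay of the zero and non-zero modes into the stated $W^{k,\infty}$ convergence \eqref{E:1.15} and exponential decay \eqref{E:1.17} through the embedding $H^{s}(\Omega)\hookrightarrow W^{k,\infty}(\Omega)$. The delicate point in this last assembly is that the zero-mode anti-derivative estimate and the non-zero-mode energy estimate are coupled through the quadratic interaction terms, so the smallness $\nu_0$ must be propagated for both simultaneously rather than sequentially.
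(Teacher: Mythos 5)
Your plan reproduces the architecture of the paper's proof: the zero/non-zero mode splitting \eqref{E:1.18}, the anti-derivative treatment of the averaged one-dimensional system for $(\bar z,\bar r_1)$ under the zero-mass condition \eqref{zc}, the Poincar\'e-type inequality $\|f^{\neq}\|_{L^2(\Omega)}\lesssim\|\nabla_{x^{\prime}}f^{\neq}\|_{L^2(\Omega)}$ as the source of the exponential rate for the non-zero mode, and the identification of Lemma \ref{L:6.1} as the crux for the nonlinear Poisson terms. The one genuine divergence is your treatment of the potential. You propose to slave $H=E-\tilde E$ and $H_t$ to $(z,\mathbf{r})$ by pure elliptic regularity for $-\Delta+e^{\tilde E}$, whereas the paper derives coupled energy estimates by testing the momentum equation with $\rho\nabla H$ and $\rho\nabla H_t$ and exploiting the identities in \eqref{nb1} (see \eqref{E:5.22}). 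For $H$ itself your shortcut is legitimate and arguably cleaner: $\|H\|_{H^2(\Omega)}\lesssim\|z\|_{L^2(\Omega)}$, the time integral $\int_0^t\|z\|_{L^2(\Omega)}^2d\tau$ is recovered from the dissipation through \eqref{poq}, and the same slaving yields the regularity gain in \eqref{E:1.14} directly.

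The point you gloss over, and the one place your route could actually fail, is the time-integrated bound on $H_t$, needed for instance to absorb the term $(\delta+\nu)\int_0^t\|H_t\|_{L^2(\Omega)}^2d\tau$ on the right of the zero-mode estimate \eqref{E:4.11}. Differentiating the Poisson equation in $t$ and substituting $z_t=-\dive \mathbf{r}$ gives, after integration by parts, $\|H_t\|_{H^1(\Omega)}^2\lesssim\|\mathbf{r}\|_{L^2(\Omega)}\|\nabla H_t\|_{L^2(\Omega)}+\cdots$, and $\int_0^t\|\mathbf{r}\|_{L^2(\Omega)}^2d\tau$ is \emph{not} controlled in this scheme: only $r_1$ is given an anti-derivative, so the transverse zero modes $\bar r_2,\bar r_3$ are merely bounded, not square-integrable in time. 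The estimate survives only because $\nabla\bar H_t$ points in the $x_1$-direction, so the zero-mode pairing collapses to $\int_{\mathbb{R}}\bar r_1\,\partial_1\bar H_t\,dx_1$ with $\bar r_1=\partial_1R$ dissipated, while the non-zero mode of $\mathbf{r}$ is controlled by gradients; this is precisely the content of \eqref{E:4.37}, and your sketch must invoke it (or an equivalent orthogonality observation) explicitly. Relatedly, your two-piece scheme omits the full three-dimensional energy estimate (Lemmas \ref{L:5.4}--\ref{L:5.6}) that supplies $\int_0^t\|\nabla z,\nabla\mathbf{w}\|_{L^2(\Omega)}^2d\tau$ including the zero-mode gradients, which the anti-derivative $L^2$ estimate alone does not provide; you would have to replace it by higher-order estimates on $(Z,R)$. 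Neither point is fatal, but both must be addressed before the continuation argument closes.
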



We now sketch the main strategy. Denote the perturbation variables by $(z, {\bf r}, H)=(\rho, {\bf m}, E)-(\tilde \rho, \tilde {\bf m}, \tilde E)$ and ${\bf w}={\bf u}-\tilde {\bf u}$. For 3-d perturbation system $(z, {\bf w}, H)$, we can only derive the following estimate due to the failure of the anti-derivative technique:
	\begin{align}\label{E:5.30n}
		&\|z, {\bf w},  H\|_{H^1(\Omega)}^2+\int_0^t\|\nabla z, \nabla {\bf w}, \nabla^2 {\bf w}\|_{L^2(\Omega)}^2d\tau
		\notag\\
		\lesssim &\mathbb{N}^2(0)+(\delta+\nu)\int_0^t\|z, w_1\|_{L^2(\Omega)}^2d\tau+(\delta+\nu)\int_0^t\|H\|_{H^1(\Omega)}^2d\tau
		\end{align}
under the {\it a priori} assumption \eqref{ass}. Here, the last two terms on the right hand side of \eqref{E:5.30n} are difficult to estimate.
	
To deal with the second term on the right hand side of \eqref{E:5.30n}, we need decompose the perturbation into the zero mode and non-zero mode. For the zero mode $(\bar z, \bar r_1, \bar H)$, using the zero-mass type condition \eqref{zc} we can define the following anti-derivates variables
\begin{align*}
(Z, R)(x_1, t)
=\int_{-\infty}^{x_1} (\bar z, \bar r_1)\left(y_1, t\right) d y_1.
	\end{align*}
Then, the perturbation system for zero mode $(Z, R, \bar H)$ is applied to get
		\begin{align}\label{sa}
		&\|R\|_{L^2(\mathbb{R})}^2+\|Z,\bar H\|_{H^1(\mathbb{R})}^2+\int_0^t \left(\|\sqrt{-\partial_1\tilde u_1}R,\partial_1R,\partial_1Z\|_{L^2(\mathbb{R})}^2+\|\bar H\|_{H^2(\mathbb{R})}^2\right)d\tau
		\notag\\
		\lesssim &\mathbb{N}^2(0)+\nu \|\nabla H\|_{L^2(\Omega)}^2
	 +(\delta+\nu) \int_0^t\left(\|z, r_1, H_t \|_{L^2(\Omega)}^2+\|H\|_{H^1(\Omega)}^2\right) d \tau.
	\end{align}
Thanks to the property of $\bar f$ shown in Lemma \ref{l1} and the relation between $w_1$ and $r_1$ shown in Lemma \ref{L:5.1n}, we get
\begin{align}\label{poq}
\|z, w_1, r_1\|_{L^2(\Omega)}^2\lesssim \|\partial_1 Z, \partial_1 R\|_{L^2(\mathbb{R})}^2+\|\nabla z, \nabla {\bf w}\|_{L^2(\Omega)}^2.
\end{align}
Thus, adding \eqref{E:5.30n} and \eqref{sa} up and using \eqref{poq} yields that
	\begin{align}\label{E:5.30nbm}
		&\|R\|_{L^2(\mathbb{R})}^2+\|Z,\bar H\|_{H^1(\mathbb{R})}^2+\|z, {\bf w},  H\|_{H^1(\Omega)}^2+\int_0^t \left(\|\partial_1R,\partial_1Z\|_{L^2(\mathbb{R})}^2+\|\bar H\|_{H^2(\mathbb{R})}^2\right)d\tau
		\notag\\
		&+\int_0^t\|\nabla z, \nabla {\bf w}, \nabla^2 {\bf w}\|_{L^2(\Omega)}^2d\tau
		\lesssim \mathbb{N}^2(0)+(\delta+\nu)\int_0^t\left(\|H_t\|_{L^2(\Omega)}^2+\|H\|_{H^1(\Omega)}^2\right)d\tau.
		\end{align}

It suffices to handle the term involving the perturbation electrostatic potential $H$ on the right hand side of \eqref{E:5.30nbm}. Based on the key observation from the structures of perturbation system that $\dive (\rho{\bf w})$ can be transformed into the derivatives of potential $H$:
\begin{equation}
\dive (\rho {\bf w})=-\partial_t z -\dive (z\tilde {\bf u}) \quad \mbox{and}\quad z=- \Delta H+N_1,\nonumber
\end{equation}
we choose some suitable weighted function to obtain the estimates of potential as follows:
\begin{align}\label{sa1}
		&\|H\|_{H^2(\Omega)}^2+\|H_t \|_{H^1(\Omega)}^2+\int_0^t\|H, H_t\|_{H^2(\Omega)}^2d\tau
	\lesssim \mathbb{N}^2(0)+\|\bar H\|_{L^2(\mathbb{R})}^2+\delta\|z\|_{L^2(\Omega)}^2
	\notag\\
	&+\int_0^t\|\bar H, \bar H_t\|_{L^2(\mathbb{R})}^2d\tau
	 +\left(\delta+\nu\right)\int_0^t\left(\|z\|_{H^1(\Omega)}^2+\|w_1, \nabla {\bf w}\|_{L^2(\Omega)}^2\right)d\tau.
\end{align} 
In addition, we can verify that 
	\begin{align}\label{E:4.37q}
	\|\bar H_t\|_{H^1(\mathbb{R})}^2\lesssim \|\partial_1 R\|_{L^2(\mathbb{R})}^2+\delta \|\bar H\|_{L^2(\mathbb{R})}^2+\nu\|\nabla H_t\|_{L^2(\Omega)}^2.
	\end{align}
Thus, taking the procedure as $c_0\eqref{E:5.30nbm}+\eqref{sa1}$ with some large constant $c_0>0$, we use  \eqref{poq} and \eqref{E:4.37q} to get 
\[\left\|Z, R, \bar H\right\|_{L^2(\mathbb{R})}+\|z, {\bf w}, H_t\|_{H^1(\Omega)}+\|H\|_{H^2(\Omega)}\lesssim C_0.\]
Moreover, we can further derive \eqref{E:1.15} after establishing the higher-order estimates of perturbation variables $(z,{\bf w}, H)$.	

Next, we will show the exponential decay rate of non-zero mode. It is worth to pointing out that the non-zero mode  satisfies
\begin{align}\label{qop}
	\| z^{\neq}, {\bf w}^{\neq}, H^{\neq}, H_t^{\neq}\|_{L^2(\Omega)}^2\lesssim\|\nabla z^{\neq}, \nabla {\bf w}^{\neq}, \nabla H^{\neq}, \nabla H_t^{\neq}\|_{L^2(\Omega)}^2,
\end{align}
which is the main reason why the anti-derivates method is not necessary and the desired decay rate can be derived. 
However, for the perturbation system of the non-zero mode, it is not trivial to show that the nonlinear terms, such as $\left(\frac{z}{\rho}\right)^{\neq}, \left(e^H-1-H\right)^{\neq}$, can be controlled  by $z^{\neq}, {\bf w}^{\neq}$ and $H^{\neq}$ due to the effect of operator $(\cdot)^{\neq}$.
To this end, we establish Lemma \ref{L:6.1}. And then we use \eqref{qop} and
Lemma \ref{L:6.1} to obtain
 \begin{align}\label{sa12}
		&\|z^{\neq}, {\bf w}^{\neq}\|_{H^1(\Omega)}^2+\int_0^t\left(\|z^{\neq}\|_{H^1(\Omega)}^2+\|{\bf w}^{\neq}\|_{H^2(\Omega)}^2\right)d\tau
	\lesssim \mathbb{N}^2(0)+\int_0^t\|\nabla H^{\neq}\|_{L^2(\Omega)}^2d\tau.
\end{align} 

To deal with the last term on the right hand side of \eqref{sa12}, we need to use the following structures of perturbation system for the non-zero mode:
\begin{equation}\label{nb1}
\dive (\tilde \rho {\bf w}^{\neq})=-\partial_t z^{\neq} -\dive {\bf J}_1 \quad \mbox{and}\quad z^{\neq}=- \Delta H^{\neq}+e^{\tilde E}H^{\neq}+N_2^{\neq}.
\end{equation}
And then by choosing suitable weighted function and using Lemma \ref{L:6.1} we obtain
 \begin{align}\label{sa11}
		&\|H^{\neq}\|_{H^2(\Omega)}^2+\|H_t^{\neq}\|_{H^1(\Omega)}^2+\int_0^t\left(\|H^{\neq}, H_t^{\neq}\|_{H^2(\Omega)}^2\right)d\tau
	\lesssim \mathbb{N}^2(0)+(\delta+\nu)\| \nabla z^{\neq}, \nabla {\bf w}^{\neq}, \nabla^2{\bf w}^{\neq}\|_{L^2(\Omega)}^2.
\end{align} 
Combining \eqref{sa12} with \eqref{sa11} leads to 
 \begin{align}\label{sa11j}
		&\|H^{\neq}\|_{H^2(\Omega)}^2+\|z^{\neq}, {\bf w}^{\neq}, H_t^{\neq}\|_{H^1(\Omega)}^2+\int_0^t\left(\|z^{\neq}\|_{H^1(\Omega)}^2+\|{\bf w}^{\neq}, H^{\neq}, H_t^{\neq}\|_{H^2(\Omega)}^2\right)d\tau
	\lesssim \mathbb{N}^2(0).
\end{align} 
Similarly, we can further obtain a second-order estimate
 \begin{align}\label{sa13}
		&\|\nabla^2z^{\neq}, \nabla^2{\bf w}^{\neq}, \nabla^3 H^{\neq}, \nabla^2 H_t^{\neq}\|_{L^2(\Omega)}^2+\int_0^t\|\nabla^2z^{\neq}, \nabla^3 {\bf w}^{\neq}, \nabla^3 H^{\neq}, \nabla^3 H_t^{\neq}\|_{L^2(\Omega)}^2d\tau
	\lesssim \mathbb{N}^2(0).
\end{align} 
From \eqref{sa11j}-\eqref{sa13} we get
\begin{align*}
	\|H^{\neq}\|_{H^3(\Omega)}^2+\|H_t^{\neq}, z^{\neq}, {\bf w}^{\neq}\|_{H^2(\Omega)}^2\lesssim C_0e^{-ct},
\end{align*}
together with the fact that $H^{\neq}$ satisfies the Poisson equation \eqref{nb1}, which implies \eqref{E:1.17}.

\

The arrangement of the present paper is as follows. In Section \ref{sp}, we just list some useful lemmas will be needed in what follows.  Section \ref{sw} is devoted to showing the local existence and define {\it a priori} assumption. In Section \ref{s2}, we will establish the estimates of 1-d zero mode $(Z, R, \bar H)(x_1, t)$. Section \ref{s3} is devoted to deriving the estimates of  $(z, {\bf w}, H, H_t)(x, t)$. The exponential decay rate of 3-d non-zero mode $(z^{\neq}, {\bf w}^{\neq}, H^{\neq}, H_t^{\neq})(x, t)$ is obtained in Section \ref{s4}.

\

\noindent{\bf Notations}. \ \ Throughout this paper, the symbol $A\lesssim B$ means that there is a positive constant $C$ such that $A\leq C B$, as well as  the symbol $A\gtrsim B$ means that there is a positive constant $c$ such that $A\geq c B$.
In addition, for any nonnegative multi-index $\alpha=(\alpha_1,\alpha_2, \alpha_3)$ with order $|\alpha|=\alpha_1+\alpha_2+\alpha_3$, define $\partial
_x^\alpha f(x)=\partial
_{x_1}^{\alpha_1}\partial
_{x_2}^{\alpha_2}\partial_{x_3}^{\alpha_3}f(x)$. If $k$ is a nonnegative integer,
we define $\nabla^kf(x) :=\{ \partial^{\alpha}f(x), \forall \,|\alpha| = k\}$ and $|\nabla^kf|=\displaystyle(\sum_{|\alpha| = k} |\partial^{\alpha}f|^2)^{1/2}$.

\section{Preliminaries}\label{sp}
Let $(\tilde \rho, \tilde m_1, \tilde E)(x_1, t):=(\tilde \rho^s, \tilde m_1^s, \tilde E^s)(\xi)$ with $\xi=x_1-st$ be the viscous 2-shock wave solution to the Cauchy problem \eqref{E:1.1c}-\eqref{E:1.3cc} for 1-d NSP. Then we have 
\begin{lemma}[\cite{Duan-Liu-Zhang}]\label{nl}
Let $T \geq 0$. For given data $\left[\rho_{-}, u_{-}\right]$ with $\rho_->0$, there exist positive constants $\delta>0, \bar{C}$ and $\underline{C}$, such that if $\left[\rho_{+}, u_{+}\right]$ satisfies  \eqref{E:1.9}-\eqref{E:1.10} and
$$
|\rho_{-}-\rho_{+}|\leq \delta,
$$
the problem \eqref{E:1.1c}-\eqref{E:1.3cc} has a unique (up to a shift in $\xi$ ) solution $(\tilde \rho^s, \tilde {u}_1^s, \tilde E^s)$ satisfying
\begin{align}\label{e:2.1}
\underline{C} \tilde E^s_{\xi} \leq \tilde \rho^s_{\xi}=\frac{(\tilde \rho^s)^2 \tilde u^s_{1\xi}}{\rho_{-}\left|u_{1-}-s\right|} \leq \bar{C} \tilde E^s_{\xi}<0, \quad \forall \, \xi \in \mathbb{R}.
\end{align}
Moreover, by a suitable choice of the shift, the solution satisfies
$$
\left|\frac{\mathrm{d}^k}{\mathrm{~d} \xi^k}\left[\tilde{\rho}-\rho_{ \pm}, \tilde{u}_1-u_{1 \pm}, \tilde E-E_{ \pm}\right](\xi)\right| \leq C_k\left|\rho_{+}-\rho_{-}\right|^{k+1} e^{-\theta\left|\left(\rho_{+}-\rho_{-}\right) \xi\right|}
$$
for $\xi \lessgtr 0$ and $k=0,1, \cdots$, where each $C_k$ and $\theta$ are generic positive constants.
\end{lemma}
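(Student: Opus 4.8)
The plan is to construct the profile by reducing the traveling-wave problem to a finite-dimensional autonomous ODE and then analyzing it perturbatively in the shock strength $\epsilon:=|\rho_--\rho_+|$.

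First I would insert the ansatz $(\tilde\rho,\tilde m_1,\tilde E)(x_1,t)=(\tilde\rho^s,\tilde m_1^s,\tilde E^s)(\xi)$ with $\xi=x_1-st$ into \eqref{E:1.1c}. The mass equation integrates immediately to $\tilde m_1^s=s\tilde\rho^s+j$ with $j:=\rho_\pm(u_{1\pm}-s)$, the two values agreeing precisely because of the first Rankine--Hugoniot relation in \eqref{E:1.9}; since $s>\lambda_+(\rho_+,u_{1+})=u_{1+}+\sqrt{T+1}$ in \eqref{E:1.10} and the mass relation excludes $s\in(u_{1+},u_{1-})$, the Lax condition forces $s>u_{1-}$, so $-j=\rho_-|u_{1-}-s|>0$. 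Because $\tilde u_1^s=s+j/\tilde\rho^s$, differentiation yields the algebraic identity $(\tilde\rho^s)^2\tilde u_{1\xi}^s=-j\,\tilde\rho_\xi^s$, which is exactly the middle equality asserted in \eqref{e:2.1}. Using the Poisson equation $\tilde E_{\xi\xi}^s=e^{\tilde E^s}-\tilde\rho^s$ and the identity $\tilde\rho^s\tilde E_\xi^s=(e^{\tilde E^s})_\xi-\tfrac12((\tilde E_\xi^s)^2)_\xi$, I would integrate the momentum equation once (the second relation in \eqref{E:1.9} making the far fields consistent) to obtain an expression for $\tilde u_{1\xi}^s$ that is algebraic in $(\tilde\rho^s,\tilde E^s,\tilde E_\xi^s)$; via the middle identity this determines $\tilde\rho_\xi^s$ algebraically, and together with the Poisson equation the profile solves an autonomous system on the three-dimensional phase space $(\tilde\rho^s,\tilde E^s,\tilde E_\xi^s)$, whose rest points are exactly the end states $(\rho_\pm,E_\pm,0)$.

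Next I would establish existence, uniqueness up to translation, and monotonicity. The key observation is that in the quasineutral regime the term $\tilde E_{\xi\xi}^s=e^{\tilde E^s}-\tilde\rho^s$ is cubically small in $\epsilon$, so the leading-order system is the classical compressible Navier--Stokes shock ODE with effective pressure $(T+1)\tilde\rho$, consistent with the sound speed $\sqrt{T+1}$ appearing in $\lambda_\pm$. For this reduced scalar equation the Lax condition \eqref{E:1.10} renders the $\xi\to-\infty$ state a source and the $\xi\to+\infty$ state a sink, so a monotone heteroclinic orbit of Burgers/Riccati ($\tanh$-)type connects them. Rescaling $\eta=\epsilon\xi$ shows the profile has amplitude $O(\epsilon)$ and decays like $e^{-\theta\epsilon|\xi|}$; since each $\xi$-derivative carries an extra factor $\epsilon$, this produces exactly the bound $|\tfrac{d^k}{d\xi^k}[\cdots]|\le C_k\,\epsilon^{k+1}e^{-\theta\epsilon|\xi|}$.

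Then I would reinstate the Poisson coupling, writing the full profile as the reduced one plus a correction and solving by a contraction mapping (or implicit function theorem) in a space carrying the weight $e^{\theta\epsilon|\xi|}$; I expect the correction and all its derivatives to be of higher order in $\epsilon$, so that monotonicity and the decay estimates survive. Finally, the comparison \eqref{e:2.1} follows by differentiating $\tilde\rho^s=e^{\tilde E^s}-\tilde E_{\xi\xi}^s$ to get $\tilde\rho_\xi^s/\tilde E_\xi^s=e^{\tilde E^s}-\tilde E_{\xi\xi\xi}^s/\tilde E_\xi^s$: the profile is decreasing, so $\tilde\rho_\xi^s,\tilde E_\xi^s<0$ (here $E_\pm=\ln\rho_\pm$ and $\rho_->\rho_+$), the main term $e^{\tilde E^s}$ lies in a compact positive interval near $[\rho_+,\rho_-]$, and the remainder is higher order, which pins the ratio between two positive constants $\underline C,\bar C$.

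The main obstacle is precisely the Poisson coupling: the $\tilde E_{\xi\xi}^s$ term raises the order of the ODE and destroys the clean two-dimensional phase plane of the Navier--Stokes case, so one must verify that the stable and unstable manifolds of the two nearly coincident rest points still intersect transversally and, crucially, that this holds uniformly as $\epsilon\to0$. Controlling the Poisson correction as genuinely higher order in the exponentially weighted norm --- equivalently, carrying out the geometric singular-perturbation or center-manifold reduction near the double equilibrium --- is the delicate step on which the uniform scaling $\epsilon^{k+1}e^{-\theta\epsilon|\xi|}$ ultimately rests.
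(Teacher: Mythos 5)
First, a point of reference: the paper you are working from does not prove this lemma at all --- it is imported verbatim from \cite{Duan-Liu-Zhang}, so there is no in-paper argument to compare against. Your outline is nevertheless the right strategy and is essentially the construction carried out in that reference: integrate the mass equation to get $\tilde\rho^s(\tilde u_1^s-s)=j$ with $j=\rho_\pm(u_{1\pm}-s)<0$ (your sign argument via \eqref{E:1.9}--\eqref{E:1.10} is correct), deduce the algebraic identity $(\tilde\rho^s)^2\tilde u^s_{1\xi}=-j\,\tilde\rho^s_\xi$, which is exactly the middle equality of \eqref{e:2.1}, use $\tilde\rho^s\tilde E^s_\xi=\bigl(e^{\tilde E^s}-\tfrac12(\tilde E^s_\xi)^2\bigr)_\xi$ to integrate the momentum equation once, and treat the resulting three-dimensional autonomous system in $(\tilde\rho^s,\tilde E^s,\tilde E^s_\xi)$ perturbatively about the quasineutral Burgers-type profile in the slow variable $\eta=\epsilon\xi$, which is what produces the scaling $\epsilon^{k+1}e^{-\theta\epsilon|\xi|}$.

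Two steps, however, are asserted where the actual work lies. (i) The linearization of the Poisson component at the rest points has hyperbolic eigenvalues of size $O(1)$ (roughly $\pm\sqrt{\rho_\pm}$), while the shock dynamics lives on an $O(\epsilon)$-slow direction; showing that the heteroclinic orbit stays on the slow manifold uniformly as $\epsilon\to 0$, i.e.\ that $e^{\tilde E^s}-\tilde\rho^s=\tilde E^s_{\xi\xi}$ really is of cubic order pointwise, is precisely the center-manifold/geometric singular-perturbation argument you name but do not carry out; without it the contraction you propose has no small parameter. (ii) The two-sided comparison $\underline{C}\,\tilde E^s_\xi\leq\tilde\rho^s_\xi\leq\bar C\,\tilde E^s_\xi<0$ cannot be deduced from the upper bounds $|\tilde E^s_{\xi\xi\xi}|\leq C\epsilon^4e^{-\theta\epsilon|\xi|}$ and $|\tilde E^s_\xi|\leq C\epsilon^2e^{-\theta\epsilon|\xi|}$ alone: the quotient $\tilde E^s_{\xi\xi\xi}/\tilde E^s_\xi$ in your identity $\tilde\rho^s_\xi/\tilde E^s_\xi=e^{\tilde E^s}-\tilde E^s_{\xi\xi\xi}/\tilde E^s_\xi$ is controlled only if one also has a pointwise \emph{lower} bound on $|\tilde E^s_\xi|$ of the same exponential type. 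This forces the fixed-point argument to be run in a norm measuring the correction relative to the derivative of the leading Burgers profile (which behaves like $\epsilon^2\cosh^{-2}(\theta\epsilon\xi)$), not merely in an exponentially weighted sup norm. With those two points supplied the proof closes; as written it is a correct skeleton with the hard analysis left open.
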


As shown in \cite{Yuan}, the 3-d functions that are periodic in $x^{\prime}=(x_2, x_3)\in \mathbb T^2$ satisfies the following Gagliardo-Nirenberg (G-N) inequalities relevant to the unbounded domain $\Omega=\mathbb R \times \mathbb T^2$, which is different from the G-N inequality in general.
\begin{lemma}[\cite{Yuan}]\label{de}
Assume that $u(x)$ is periodic in $x^{\prime}=\left(x_2, x_3\right)$ and belongs to the $L^q(\Omega)$ space with $\nabla^m u \in L^r(\Omega)$, where $1 \leqslant q, r \leqslant+\infty$ and $m \geqslant 1$. Then there exists a decomposition $\displaystyle u(x)=\sum_{k=1}^3 u^{(k)}(x)$ such that
1) each $u^{(k)}$ satisfies that
$$
\left\|\nabla^j u^{(k)}\right\|_{L^p(\Omega)} \lesssim\left\|\nabla^j u\right\|_{L^p(\Omega)},
$$
where $j \geqslant 0$ is any integer and $p \in[1,+\infty]$ is any number;
2) each $u^{(k)}$ satisfies the $k$-dimensional $G$ - $N$ inequality, namely,
$$
\left\|\nabla^j u^{(k)}\right\|_{L^p(\Omega)} \lesssim\left\|\nabla^m u\right\|_{L^r(\Omega)}^{\theta_k}\|u\|_{L^q(\Omega)}^{1-\theta_k},
$$
where $0 \leqslant j<m$ is any integer and $1 \leqslant p \leqslant+\infty$ is any number, satisfying
$$
\frac{1}{p}=\frac{j}{k}+\left(\frac{1}{r}-\frac{m}{k}\right) \theta_k+\frac{1}{q}\left(1-\theta_k\right) \quad \text { with } \quad \frac{j}{m} \leqslant \theta_k \leqslant 1.
$$
Moreover, it holds that
$$
\left\|\nabla^j u\right\|_{L^p(\Omega)} \lesssim \sum_{k=1}^3\left\|\nabla^m u\right\|_{L^r(\Omega)}^{\theta_k}\|u\|_{L^q(\Omega)}^{1-\theta_k}.
$$
\end{lemma}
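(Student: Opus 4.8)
The plan is to build the decomposition $u = u^{(1)}+u^{(2)}+u^{(3)}$ explicitly from the transverse Fourier series together with a single longitudinal frequency cutoff, and then to prove the three Gagliardo--Nirenberg estimates separately by matching each piece to the dimension in which it genuinely lives. Writing the transverse Fourier expansion $u(x_1,x')=\sum_{n\in\mathbb{Z}^2}u_n(x_1)e^{2\pi i n\cdot x'}$, the zero transverse mode is $\bar u(x_1)=u_0(x_1)$, and on the non-zero part $u^{\neq}:=u-\bar u$ all transverse frequencies satisfy $|n|\ge 1$ (a spectral gap). I fix once and for all a smooth even cutoff $\chi$ with $\chi\equiv 1$ on $\{|\eta|\le 1\}$ and $\chi\equiv 0$ on $\{|\eta|\ge 2\}$, and let $P_{\le}$, $P_{>}=I-P_{\le}$ be the longitudinal Fourier multipliers with symbols $\chi(\eta)$, $1-\chi(\eta)$ acting in $x_1$. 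I then set
$$u^{(1)}:=\bar u,\qquad u^{(2)}:=P_{\le}u^{\neq},\qquad u^{(3)}:=P_{>}u^{\neq},$$
so that $u=u^{(1)}+u^{(2)}+u^{(3)}$.

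I would establish part (1) first, as it is the easy half. The averaging operator $u\mapsto\bar u$ is an $L^p$-contraction for every $p\in[1,\infty]$ by Jensen/Minkowski, while $P_{\le}$ and $P_{>}$ are convolutions in $x_1$ against fixed Schwartz (in particular $L^1$) kernels, hence bounded on every $L^p(\Omega)$, $1\le p\le\infty$, including the endpoints; all three operators commute with $\partial_x^\alpha$. This yields $\|\nabla^j u^{(k)}\|_{L^p(\Omega)}\lesssim\|\nabla^j u\|_{L^p(\Omega)}$ for all $j$ and all $p\in[1,\infty]$, which is exactly part (1) and, taking $j=0$ and $j=m$, gives $\|u^{(k)}\|_{L^q}\lesssim\|u\|_{L^q}$ and $\|\nabla^m u^{(k)}\|_{L^r}\lesssim\|\nabla^m u\|_{L^r}$.

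For part (2) I handle the pieces by effective dimension. For $u^{(1)}=\bar u$, which depends on $x_1$ only, the classical one-dimensional G--N inequality on $\mathbb{R}$ gives the $k=1$ estimate, lifted to $\Omega$ for free since $|\mathbb{T}^2|=1$ and $\|\partial_1^m\bar u\|_{L^r(\mathbb{R})}\le\|\nabla^m u\|_{L^r(\Omega)}$ by Minkowski. For $u^{(3)}$, the frequency support lies in $\{|\eta|\gtrsim 1\}\cap\{|n|\ge 1\}$, so its total frequency is bounded below; at such scales the transverse torus is indistinguishable from $\mathbb{R}^2$ for Bernstein's inequality, and the standard Littlewood--Paley proof of the homogeneous $3$-d G--N inequality goes through verbatim (there are no low frequencies, so only the high-frequency tail must be summed, convergent in the admissible range $j/m\le\theta_3\le 1$). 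The combined inequality in the ``Moreover'' clause then follows from $\nabla^j u=\sum_k\nabla^j u^{(k)}$, the triangle inequality, the $k$-dimensional estimate for each $u^{(k)}$, and part (1) to pass from $u^{(k)}$-norms back to $u$-norms on the right.

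The main obstacle is the middle piece $u^{(2)}=P_{\le}u^{\neq}$, which must obey the genuinely $2$-dimensional inequality even though it is a function on the $3$-dimensional domain $\Omega$ with an unbounded $x_1$-direction. The key is that $u^{(2)}$ is band-limited in $x_1$ (longitudinal frequency $\lesssim 1$) while carrying all transverse frequencies $|n|\ge 1$, so the estimate should be driven by the transverse $\mathbb{T}^2$-scaling with the longitudinal direction contributing no scaling. Concretely I would: (i) for a.e.\ fixed $x_1$, apply the homogeneous $2$-d G--N inequality on $\mathbb{T}^2$ to the mean-zero slice $u^{(2)}(x_1,\cdot)$ (valid thanks to the transverse spectral gap), controlling lower transverse derivatives by $\nabla'^{m}$ via Bernstein on $\mathbb{T}^2$ since $|n|\ge 1$, where $\nabla'=(\partial_{x_2},\partial_{x_3})$; (ii) recombine in $x_1$ by Minkowski's integral inequality; and (iii) crucially, absorb the non-compact $x_1$-integration by invoking \emph{Bernstein's inequality in} $x_1$, namely $\|f\|_{L^p(\mathbb{R})}\lesssim\|f\|_{L^{p_0}(\mathbb{R})}$ for $p\ge p_0$ and $f$ band-limited, with $1/p_0=\theta_2/r+(1-\theta_2)/q$. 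A short computation from the $2$-d scaling relation gives $1/p-1/p_0=\tfrac12(j-m\theta_2)\le 0$ precisely because $\theta_2\ge j/m$, so $p\ge p_0$ and the longitudinal Bernstein upgrade is exactly what compensates for the ``missing'' dimension and reconciles the transverse G--N exponent with the longitudinal Hölder interpolation. Verifying this matching, and checking the admissibility constraints $j/m\le\theta_k\le 1$ together with the endpoint cases $p,q,r\in\{1,\infty\}$ throughout, is where the real work lies.
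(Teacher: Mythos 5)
The paper does not prove this lemma at all: it is quoted verbatim from Yuan \cite{Yuan} and used as a black box, so there is no in-paper argument to compare yours against. Judged on its own merits, your construction is sound and, as far as I can tell, close in spirit to Yuan's: the split into the transverse zero mode $\bar u$, a longitudinally band-limited non-zero mode $P_{\le}u^{\neq}$, and a remainder $P_{>}u^{\neq}$ is the natural one, part (1) is immediate from the $L^1$-kernel bounds you cite, and the key mechanism for the middle piece --- slicewise $2$-d G--N on the mean-zero torus slices, H\"older in $x_1$ landing in the mixed norm $L^{p_0}_{x_1}L^p_{x'}$ with $1/p_0=\theta_2/r+(1-\theta_2)/q$, then Minkowski and longitudinal Bernstein to upgrade to $L^p(\Omega)$ using exactly $1/p-1/p_0=(j-m\theta_2)/2\le 0$ --- is correct and is the genuinely clever step. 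Your exponent arithmetic checks out.

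Two points you should tighten if this were to be written out. First, for $u^{(2)}$ the left-hand side involves the full gradient $\nabla^j$, not just $\nabla'^j$; you need to first strip the $\partial_1^a$ factors of a mixed derivative $\partial_1^a\nabla'^b$ by longitudinal Bernstein and then raise $\nabla'^b$ to $\nabla'^j$ by the transverse spectral-gap Bernstein, and the latter must be justified at $p=1,\infty$ as well (it holds because the relevant negative-order multipliers on mean-zero functions on $\mathbb{T}^2$ have integrable kernels, but this deserves a sentence). Second, your claim that the $3$-d Littlewood--Paley argument for $u^{(3)}$ ``goes through verbatim'' on the whole range $j/m\le\theta_3\le 1$ glosses over the standard endpoint exclusions of Gagliardo--Nirenberg (e.g.\ $\theta=1$ with $1<r<\infty$ and $m-j-3/r$ a nonnegative integer, and the borderline cases where the dyadic sums only converge up to logarithms); the lemma as quoted omits these exclusions too, so this is an imprecision inherited from the statement rather than a flaw specific to your argument, but it is where the remaining work sits.
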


For any function $f\in L^{\infty}(\Omega)$ that is periodic in $x^{\prime}=(x_2, x_3)\in \mathbb T^2$, the zero mode $\bar f$ and non-zero mode $f^{\neq}$ defined in \eqref{E:1.18} possess the following properties.
\begin{lemma}[\cite{Yuan}]\label{l1}
For any $p \in[1,+\infty]$, it holds that
\begin{align*}
\left\|\bar f\right\|_{L^p(\mathbb{R})} & \lesssim\|f\|_{L^p(\Omega)}, 
\notag\\
\left\|f^{\neq}\right\|_{L^p(\Omega)} & \lesssim\|f\|_{L^p(\Omega)}+\left\|\bar f\right\|_{L^p(\mathbb{R})} \lesssim\|f\|_{L^p(\Omega)}
\end{align*}
and
\begin{align*}
\left\|f^{\neq}\right\|_{L^p(\Omega)} \lesssim\left\|\nabla_{x^{\prime}} f^{\neq}\right\|_{L^p(\Omega)}. 
\end{align*}
\end{lemma}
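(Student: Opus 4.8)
The plan is to establish the three bounds in sequence, using throughout the splitting $x=(x_1,x')$ with $x'\in\mathbb{T}^2$ and the fact that the flat torus $\mathbb{T}^2=(\mathbb{R}/\mathbb{Z})^2$ carries a probability measure, $|\mathbb{T}^2|=1$.

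First I would prove $\|\bar f\|_{L^p(\mathbb{R})}\lesssim\|f\|_{L^p(\Omega)}$ by viewing $\bar f(x_1)=\int_{\mathbb{T}^2}f(x_1,x')\,dx'$ as an average over the probability space $\mathbb{T}^2$. For $p\in[1,\infty)$, Jensen's inequality for the convex function $t\mapsto|t|^p$ gives $|\bar f(x_1)|^p\le\int_{\mathbb{T}^2}|f(x_1,x')|^p\,dx'$ pointwise in $x_1$; integrating in $x_1$ and applying Fubini yields $\|\bar f\|_{L^p(\mathbb{R})}^p\le\|f\|_{L^p(\Omega)}^p$, so in fact the constant is $1$. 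For $p=\infty$ one simply bounds the average by the supremum.

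Next I would treat the chain of inequalities for $f^{\neq}$. Since $f^{\neq}=f-\bar f$, the triangle inequality gives $\|f^{\neq}\|_{L^p(\Omega)}\le\|f\|_{L^p(\Omega)}+\|\bar f\|_{L^p(\Omega)}$. Because $\bar f$ depends only on $x_1$ and $|\mathbb{T}^2|=1$, the transverse integration is trivial and one has the exact identity $\|\bar f\|_{L^p(\Omega)}=\|\bar f\|_{L^p(\mathbb{R})}$, which delivers the first stated inequality. The second inequality $\|f\|_{L^p(\Omega)}+\|\bar f\|_{L^p(\mathbb{R})}\lesssim\|f\|_{L^p(\Omega)}$ is then immediate from the bound just proved in the previous paragraph.

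Finally, for the Poincar\'e-type estimate $\|f^{\neq}\|_{L^p(\Omega)}\lesssim\|\nabla_{x'}f^{\neq}\|_{L^p(\Omega)}$, the crucial observation is that for each fixed $x_1$ the fiber $f^{\neq}(x_1,\cdot)$ has zero mean on $\mathbb{T}^2$, since $\int_{\mathbb{T}^2}f^{\neq}(x_1,x')\,dx'=\bar f(x_1)-|\mathbb{T}^2|\,\bar f(x_1)=0$. The Poincar\'e--Wirtinger inequality on the compact manifold $\mathbb{T}^2$ then supplies a constant $C=C(p)$, \emph{independent of} $x_1$, such that $\|f^{\neq}(x_1,\cdot)\|_{L^p(\mathbb{T}^2)}\le C\|\nabla_{x'}f^{\neq}(x_1,\cdot)\|_{L^p(\mathbb{T}^2)}$; raising to the power $p$ and integrating in $x_1$ (or taking the supremum in $x_1$ when $p=\infty$) gives the claim. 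The hard part is really just this last step: one must invoke the mean-zero property on each fiber and check that the Poincar\'e constant can be chosen uniformly in $x_1$ and valid across the whole range $p\in[1,\infty]$. The first two inequalities are essentially bookkeeping built on Jensen's inequality and the normalization $|\mathbb{T}^2|=1$.
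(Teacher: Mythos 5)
Your proof is correct. The paper does not prove this lemma at all --- it is quoted directly from \cite{Yuan} --- so there is no in-paper argument to compare against; your self-contained route (Jensen's inequality with $|\mathbb{T}^2|=1$ for the zero mode, the identity $\|\bar f\|_{L^p(\Omega)}=\|\bar f\|_{L^p(\mathbb{R})}$ plus the triangle inequality for the chain, and the fiber-wise Poincar\'e--Wirtinger inequality on $\mathbb{T}^2$ applied to the mean-zero function $f^{\neq}(x_1,\cdot)$, with a constant independent of $x_1$ by translation invariance) is the standard and complete way to establish all three estimates, including the endpoint cases $p=1$ and $p=\infty$.
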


{\bf Galilean transformation.} For any constant $a \in \mathbb{R}$, one has 
 the Galilean transformation,
$$
\left(\rho^*, u_1^*, u_2^*, u_3^*, E^*\right)\left(x^*, t^*\right)=\left(\rho, u_1-a, u_2, u_3, E\right)\left(x_1^*+a t^*, x_2^*, x_3^*, t^*\right), \quad x^* \in \mathbb{R}^3, t^* \geq 0.
$$
Then the compressible NSP system \eqref{E:1.1}, the Rankine-Hugoniot conditions \eqref{E:1.9} and the entropy condition \eqref{E:1.10} are invariant under the Galilean transformation. Note that the transformed shock wave  $\left(\tilde{\rho}^{*}, \tilde{u}_1^{*}, \tilde{E}^{*}\right)\left(x_1^*, t^*\right)$ propagates along the $x_1^*$-axis with the shock speed $s^*:=s-a$ and connects the shock states $\left({\rho}_{ \pm}^*, {u}_{1\pm}^*, {E}_{ \pm}^*\right):=\left({\rho}_{ \pm}, {u}_{1\pm}-a, {E}_{ \pm}\right)$ as $x_1^* \rightarrow \pm \infty$. Then ${u}_{1-}^*=-{u}_{1+}^*$ follows as $a=\frac{{u}_{1-}+{u}_{1+}}{2}$.
Thus, without loss of generality we assume 
$$
{u}_{1-}=-{u}_{1+}>0
$$
in this paper. Then it follows from \eqref{E:1.9} that 
$$
{u}_{1-}=\left|{u}_{1+}\right| \lesssim \left|{\rho}_{+}-{\rho}_{-}\right|\leq \delta,
$$
together with $\tilde u^s_{1\xi}<0$ from \eqref{e:2.1}, which yields that
\begin{align}\label{cs}
\left|\tilde{u}_1(x_1, t)\right|=\left|\tilde{u}_1^s\left(\xi\right)\right| \leq \left|{u}_{1 \pm}\right| \lesssim \delta, \quad \forall \,x_1 \in \mathbb{R}, \, t>0.
\end{align}

\section{Reformulated problem}\label{sw}
It follows from \eqref{E:1.1c} that the planar viscous shock wave $(\tilde \rho,\tilde {\bf m},\tilde E)(x,t)$ satisfies the following system:
\begin{align}\label{E:1.2} 
	\left\{\begin{array}{l}
		\partial_t \tilde{\rho}+\operatorname{div} \tilde{\bf  m}=0, \\
		\partial_t \tilde{\bf m}+\operatorname{div}\left(\frac{\tilde{\bf m} \otimes \tilde{\bf m}}{\tilde{\rho}}\right)+T\nabla\tilde{\rho}=\Delta \tilde{{\bf u}}-\tilde{\rho} \nabla \tilde{E}, \\
		-\Delta \tilde{E}=\tilde{\rho}-e^{\tilde{E}}
	\end{array}\right.
\end{align}
with 
\begin{align}\label{E:1.2c}
	\begin{cases}
		\tilde \rho(x,0)=\tilde \rho_0(x_1)\rightarrow \rho_{ \pm}>0 \quad \mbox{as} \quad x_1\rightarrow \pm \infty,\\
		\tilde {\bf m}(x,0)=(\tilde m_{10}(x_1), 0, 0)^T\rightarrow (m_{1\pm}, 0, 0)^T \quad \mbox{as} \quad x_1\rightarrow \pm \infty
	\end{cases}
\end{align}
and 
\begin{equation}\label{bE:1.3c}
	\lim\limits_{x\rightarrow \pm \infty}\tilde E(x_1, t)=E_{\pm}.
\end{equation}
Set $z=\rho-\tilde {\rho}, {\bf w}={\bf u}-\tilde{{\bf u}}, H=E-\tilde{E}$. Then by \eqref{E:1.1}-\eqref{E:1.12b} and \eqref{E:1.2}-\eqref{bE:1.3c}, $(z,{\bf w}, H)(x, t)$ admits 
\begin{align}\label{E:5.3}
	\begin{cases}
		\partial_t z+\dive (\rho {\bf w}+z\tilde {\bf u})=0,\\
		\partial_t{\bf w}+\frac{T}{\rho}\nabla z-\frac{1}{\rho}\Delta {\bf w}+\nabla H=-{\bf h_1}-{\bf h_2},\\
		-\Delta H=z-N_1=z-e^{\tilde E}H-N_2,
	\end{cases}
\end{align}
with 
\begin{align}
	\begin{cases}
		z(x,0)=z_0(x)=(\rho_0-\tilde \rho_0)(x)\rightarrow 0 \quad \mbox{as} \quad x_1\rightarrow \pm \infty,\\
		{\bf w}(x,0)={\bf w}_0(x)=(u_{10}-\tilde u_{10}, u_{20}, u_{30})^T(x)\rightarrow {\bf 0} \quad \mbox{as} \quad x_1\rightarrow \pm \infty
	\end{cases}
\end{align}
and 
\begin{equation}\label{E:1.3c}
	\lim\limits_{x\rightarrow \pm \infty}H(x, t)=0,
\end{equation}
where 
\begin{align*}
	{\bf h_1}&={\bf u}\cdot \nabla {\bf w}=\tilde {\bf u}\cdot \nabla {\bf w}+{\bf w}\cdot \nabla {\bf w},\notag\\
	{\bf h_2}&=\left(\frac{1}{\rho}-\frac{1}{\tilde\rho}\right)\left(T\nabla \tilde \rho-\Delta \tilde {\bf u}\right)+\nabla \tilde {\bf u}\cdot {\bf w}=\left(\frac{1}{\rho}-\frac{1}{\tilde\rho}\right)\left(T\nabla \tilde \rho-\Delta \tilde {\bf u}\right)+w_1\partial_1 \tilde u_1,\notag\\
	N_1&=e^{\tilde E}(e^H-1), \quad N_2=e^{\tilde E}(e^H-1-H).
\end{align*}

It is noted that the potential term $\rho \nabla E$  can be converted into 
\begin{align*}
	\rho \nabla E 
	& =-\operatorname{div}(\nabla E \nabla E)+\frac{1}{2} \nabla(\nabla E)^2+\nabla \rho+ \nabla(\Delta E)
\end{align*}
and $\tilde \rho \nabla \tilde E$ has the same form.
Set the perturbation of momentum ${\bf r}=\left(r_1, r_2, r_3\right)^T={\bf m}-\tilde{{\bf m}}$, then from \eqref{E:1.1} and \eqref{E:1.2} we obtain the perturbation system of $(z, {\bf r}, H)(x,t)$ as follows:
%
\begin{align}\label{E:1.6}
\left\{\begin{array}{l}
	\partial_t z+\operatorname{div} {\bf r}=0, \\
	\partial_t {\bf r}+\operatorname{div}\left(\frac{{\bf m} \otimes {\bf m}}{\rho}-\frac{\tilde{\bf m} \otimes \tilde{{\bf m}}}{\tilde{\rho}}\right)+(T+1) \nabla z+\nabla\Delta H=\Delta {\bf w}+{\bf L}, \\
	-\Delta H=z-N_1=z-e^{\tilde E}H-N_2,
\end{array}\right.
\end{align}
with the initial data $z_0(x)$ and ${\bf r}_0(x)$,
where 
\begin{align}
	&{\bf L}=\operatorname{div}(\nabla E \nabla E-\nabla \tilde{E} \nabla \tilde{E})-\frac{1}{2}\nabla\left((\nabla E)^2-(\nabla \tilde{E})^2\right).\nonumber
\end{align}
It follows from \eqref{E:1.6} that
$$
\begin{array}{ll}
	\frac{d}{d t} \left(\int_{\Omega} z d x\right)=0 \quad \mbox{and}\quad  \frac{d}{d t} \left(\int_{\Omega} {\bf r} d x\right)={\bf 0},
\end{array}
$$
which yields that 
$$
\begin{array}{ll}
	\int_{\Omega} z(x, t) d x=\int_{\Omega} z(x, 0) d x\quad \mbox{and}\quad\int_{\Omega} {\bf r}(x, t) d x=\int_{\Omega} {\bf r}(x, 0) d x .
\end{array}
$$
Under the zero-mass type condition \eqref{zc}
 we have
$$
\int_{\Omega} z(x, t) d x=0 \quad \text { and }\quad \int_{\Omega} {\bf r}(x, t) d x={\bf 0}.
$$
Integrating \eqref{E:1.6} with respect to $x^{\prime}$ over $\mathbb{T}^2$ gives that
\begin{align*}
	\left\{\begin{array}{l}
		\partial_t \bar z+\partial_1 \bar r_1=0, \\
		\partial_t {\bf r}^{od}+\partial_1 \int_{\mathbb{T}^2}\left(\frac{{\bf m} m_1}{\rho}-\frac{\tilde{\bf m} \tilde{m}_1}{\tilde{\rho}}\right) d x^{\prime}+(T+1) \partial_1 \bar z {\bf e}_1+\partial_1^3\bar H{\bf e}_1=\partial_1^2 \bar{\bf w}+\bar {\bf L}, \\
		-\partial_1^2 \bar H=\bar z-e^{\tilde E}\bar H-\overline{N_2},
	\end{array}\right.
\end{align*}
where $\bar f$ is defined in \eqref{E:1.18} and ${\bf e}_1=(1,0,0)^T$.
Thus, $(\bar z, \bar r_1, \bar H)(x, t)$ satisfies
\begin{align}\label{E:2.2}
	\left\{\begin{array}{l}
		\partial_t \bar z+\partial_1 \bar r_1=0, \\
		\partial_t \bar r_1+\partial_1 \int_{\mathbb{T}^2}\left(\frac{m_1^2}{\rho}-\frac{ \tilde{m}_1^2}{\tilde{\rho}}\right) d x^{\prime}+(T+1) \partial_1 \bar z +\partial_1^3\bar H=\partial_1^2 w_1^{od}+\partial_1\overline{L_1}, \\
		-\partial_1^2 \bar H=\bar z-e^{\tilde E}\bar H-\overline{N_2},
	\end{array}\right.
\end{align}
where 
\begin{align}
	\overline{L_1}=&\frac{1}{2}\left[\int_{\mathbb{T}^2}\left((\partial_1E)^2-(\partial_1\tilde E)^2\right)dx^{\prime}-\int_{\mathbb{T}^2}\left((\partial_2H)^2+(\partial_3 H)^2\right)dx^{\prime}\right],\label{E:4.7-2}\\
	\overline{N_2}=&e^{\tilde E}\int_{\mathbb{T}^2}\left(e^H-1-H\right)dx^{\prime}.\label{E:4.7}
\end{align}
Let
\begin{align}
	& Z=\int_{-\infty}^{x_1} \int_{\mathbb{T}^2} z\left(y_1, x^{\prime}, t\right) d x^{\prime} d y_1=\int_{-\infty}^{x_1} \bar z\left(y_1, t\right) d y_1, \label{i1}\\
	& R=\int_{-\infty}^{x_1} \int_{\mathbb{T}^2} r_1\left(y_1, x^{\prime}, t\right) d x^{\prime} d y_1=\int_{-\infty}^{x_1} \bar r_1\left(y_1, t\right) d y_1. \label{i2}
\end{align}
Integrating $\eqref{E:2.2}_1$ and $\eqref{E:2.2}_2$ from $-\infty$ to $x_1$ gives that
\begin{align}\label{E:4.8}
	& \left\{\begin{array}{l}
		\partial_t Z+\partial_1 R=0, \\
		\partial_t R+2 \tilde{u}_1 \partial_1  R+\left(T+1-\tilde{u}_1^2\right) \partial_1 Z-\partial_1\left[\frac{1}{\tilde{\rho}}\left(\partial_1 R-\tilde{u}_1 \partial_1 Z\right)\right]+\partial_1^2 \bar H=\partial_1 f_2-f_1+\overline{L_1}, \\
		-\partial_1^2 \bar H=\bar z-e^{\tilde E}\bar H-\overline{N_2}
	\end{array}\right. 
\end{align}
with the initial data 
\begin{align}
(Z, R)(x_1, 0)=(Z_0, R_0)(x_1):=\int_{-\infty}^{x_1}(\bar z_0, \bar r_{10})(y_1)dy_1,\nonumber
\end{align}
where
\begin{align}
	&f_1=\int_{\mathbb{T}^2}\left(\frac{m_1^2}{\rho}-\frac{\tilde{m}_1^2}{\tilde{\rho}}-\frac{2\tilde m_1}{\tilde{\rho}}\left(m_1-\tilde{m}_1\right)+\frac{\tilde{m}_1^2}{\tilde{\rho}^2}(\rho-\tilde{\rho})\right) d x^{\prime}, \label{E:4.9}\\
&f_2=\int_{\mathbb{T}^2}\left(\frac{m_1}{\rho}-\frac{\tilde{m}_1}{\tilde{\rho}}-\frac{1}{\tilde{\rho}}\left(m_1-\tilde{m}_1\right)+\frac{\tilde{m}_1}{\tilde{\rho}^2}(\rho-\tilde{\rho})\right) d x^{\prime}.\label{E:4.10}
\end{align}

Motivated by \cite{Matsumura-Nishida}, we seek for the solution of \eqref{E:5.3}-\eqref{E:1.3c} in the following solution space
\begin{align*}
X_{t_0}=\Big\{(z, {\bf w}, H)(x,t) \Big|&(z, {\bf w}, H_t)\in C\left(0, t_0 ; H^4(\Omega)\right),   \,H \in C\left(0, t_0 ; H^5(\Omega)\right),
\notag\\
 &\nabla z \in L^2\left(0, t_0 ; H^3(\Omega)\right), \nabla {\bf w} \in L^2\left(0, t_0 ; H^4(\Omega)\right), \nabla H \in L^2\left(0, t_0 ; H^4(\Omega)\right), 
 \notag\\
 &\nabla H_t \in L^2\left(0, t_0 ; H^4(\Omega)\right) \quad\mbox{and}\quad \mathbb{N}_1(t_0)\leq E_0\Big\}
\end{align*}
for some positive constant $E_0$, where 
\begin{align*}
\mathbb{N}_1(t_0):=\sup_{t \in[0, t_0]}\left(\|z, {\bf w}, H_t\|_{H^4(\Omega)}+\|H\|_{H^5(\Omega)}\right).
\end{align*}

\begin{theorem}[Local existence]\label{le}
Under the assumptions of Theorem \ref{theorem1.1}, there exist $t_0>0, \delta_0>0$ and $\nu_0>0$ such that if
\begin{align}\label{u1}
\delta \leqslant \delta_0, \quad \left\|z_0, {\bf r}_0\right\|_{H^4(\Omega)}+\left\|Z_0, R_0\right\|_{L^2(\mathbb{R})} \leqslant \nu_0,
\end{align}
then the problem \eqref{E:5.3}-\eqref{E:1.3c} admits a unique solution $(z, {\bf w}, H)(x,t) \in X_{t_0}$. Moreover, the anti-derivative $(Z, R)(x_1, t)$ given as in \eqref{i1}-\eqref{i2} exists and $(Z, R)(x_1, t) \in C\left(0, t_0 ; H^5(\mathbb{R})\right)$ with 
\[\mathbb{N}_2(t_0):=\sup_{t \in[0, t_0]}\left\|Z, R\right\|_{L^2(\mathbb{R})}\leq E_1\]
for some positive constant $E_1$.
\end{theorem}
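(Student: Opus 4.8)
The plan is to prove Theorem \ref{le} by a linearization-and-iteration argument adapted to the mixed hyperbolic--parabolic--elliptic structure of \eqref{E:5.3}, and then to treat the anti-derivative variables $(Z,R)$ separately. The key structural observation is that the three equations decouple at the level of a single iteration step: the first equation is a linear transport equation for $z$ once the velocity field is frozen; the second is a uniformly parabolic system for ${\bf w}$ once $\rho=\tilde\rho+z$ and $H$ are frozen; and the third, with the nonlinearity $N_2=e^{\tilde E}(e^H-1-H)$ frozen, is the linear elliptic equation $-\Delta H+e^{\tilde E}H=z-N_2$, which is uniquely solvable and gains two derivatives by elliptic regularity on $\Omega=\mathbb{R}\times\mathbb{T}^2$.

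First I would set up the iteration: given $(z^n,{\bf w}^n,H^n)$ and $\rho^n=\tilde\rho+z^n$, define $(z^{n+1},{\bf w}^{n+1},H^{n+1})$ by
$$\partial_t z^{n+1}+\dive(\rho^n{\bf w}^n+z^{n+1}\tilde{\bf u})=0,\qquad -\Delta H^{n+1}+e^{\tilde E}H^{n+1}=z^{n+1}-N_2^n,$$
$$\partial_t{\bf w}^{n+1}+\tfrac{T}{\rho^n}\nabla z^{n+1}-\tfrac{1}{\rho^n}\Delta{\bf w}^{n+1}+\nabla H^{n+1}=-{\bf h}_1^n-{\bf h}_2^n,$$
with the nonlinear terms evaluated at the previous iterate. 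For $\delta_0,\nu_0$ small the coefficient $\rho^n$ stays close to the shock profile, hence bounded away from zero by Lemma \ref{nl}, so each linear problem is solvable in the class dictated by $X_{t_0}$, the elliptic problem being solved last using the just-obtained $z^{n+1}$. I would then derive uniform-in-$n$ a priori bounds by the standard energy method: energy estimates on the transport equation control $z^{n+1}$ in $C(0,t_0;H^4(\Omega))$, the parabolic dissipation $\tfrac{1}{\rho^n}\Delta{\bf w}^{n+1}$ yields the corresponding control of ${\bf w}^{n+1}$ together with its integrated gradient norms, and elliptic regularity $\|H^{n+1}\|_{H^{s+2}}\lesssim\|z^{n+1}-N_2^n\|_{H^s}$ with the Banach-algebra property of $H^s(\Omega)$ for $s>3/2$ controls $N_2^n$ and hence $H^{n+1}$. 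This produces a differential inequality $\tfrac{d}{dt}\mathbb{N}_1^2\lesssim P(\mathbb{N}_1)$ for a polynomial $P$, so that for $t_0$ and the data small every iterate satisfies $\mathbb{N}_1(t_0)\le E_0$ and stays in $X_{t_0}$.

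Next I would prove convergence by estimating the differences of consecutive iterates in a lower-order norm (for instance $L^2$ for $z$ and ${\bf w}$, with the elliptic gain for $H$); using the uniform bounds to control the coefficients and the differences of the nonlinearities shows the scheme is contractive after possibly shrinking $t_0$, and the limit $(z,{\bf w},H)$ lies in $X_{t_0}$ by the uniform high-order bounds and weak lower semicontinuity, solving \eqref{E:5.3}--\eqref{E:1.3c}; uniqueness follows from the same difference estimate applied to two solutions. For the anti-derivative variables, the zero-mass condition \eqref{zc} and Lemma \ref{l1} give $\int_{\mathbb{R}}\bar z\,dx_1=\int_\Omega z\,dx=0$ and likewise for $\bar r_1$, so $Z=\int_{-\infty}^{x_1}\bar z\,dy_1$ and $R=\int_{-\infty}^{x_1}\bar r_1\,dy_1$ are well defined and decay at both ends, with $(Z,R)(\cdot,0)\in L^2(\mathbb{R})$ by hypothesis; since $(Z,R)$ obeys the system \eqref{E:4.8}, whose coefficients and source terms are already controlled by $\mathbb{N}_1(t_0)$, a direct energy estimate on \eqref{E:4.8} closes $\mathbb{N}_2(t_0)\le E_1$ and yields $(Z,R)\in C(0,t_0;H^5(\mathbb{R}))$.

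The main obstacle is closing the uniform high-order estimates against the elliptic coupling: the potential enters the momentum balance through $\nabla H$, and the nonlinear Poisson source $N_2$ must be bounded in $H^4$ uniformly along the iteration. This is resolved by the two-derivative gain of the elliptic solve combined with the smallness of $\nu_0,\delta_0$, but it is exactly what forces the precise regularity bookkeeping in the definition of $X_{t_0}$---in particular the pairing $H\in H^5(\Omega)$ with $H_t\in H^4(\Omega)$, and the fact that $z$ and ${\bf w}$ are measured one derivative below $H$.
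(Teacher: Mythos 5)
Your treatment of the local solvability of $(z,\mathbf{w},H)$ --- a decoupled transport/parabolic/elliptic iteration with uniform high-order energy bounds and contraction in a lower norm --- is exactly what the paper compresses into ``standard (see Matsumura--Nishida)'', so that half of the argument is fine and matches the paper's route.

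The gap is in the anti-derivative part. You define $Z=\int_{-\infty}^{x_1}\bar z\,dy_1$ and $R=\int_{-\infty}^{x_1}\bar r_1\,dy_1$ for $t>0$, assert that the zero-mass condition \eqref{zc} makes them ``well defined and decay at both ends'', and then close $\mathbb{N}_2(t_0)\le E_1$ by ``a direct energy estimate on \eqref{E:4.8}''. But for $t>0$ the local solution only provides $\bar z(\cdot,t),\bar r_1(\cdot,t)\in H^4(\mathbb{R})$, and $H^4(\mathbb{R})$ does not embed into $L^1(\mathbb{R})$: the primitives need not converge, let alone belong to $L^2(\mathbb{R})$, and the identity $\int_{\mathbb{R}}\bar z(\cdot,t)\,dx_1=0$ is itself only meaningful once integrability is known. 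Moreover, the energy estimate on \eqref{E:4.8} presupposes $Z,R\in C(0,t_0;L^2(\mathbb{R}))$ in order to justify the integrations by parts and the finiteness of the quantities being estimated --- which is precisely the conclusion of the ``Moreover'' clause, so as written the step is circular. The paper avoids this by reversing the logic: it rewrites the zero-mode equations as \eqref{i3}, constructs $R$ directly as the solution of the linear parabolic Cauchy problem $\partial_t R-\partial_1\bigl(\tfrac{\partial_1 R}{\tilde\rho}\bigr)=K$ with the datum $R_0$, which lies in $L^2(\mathbb{R})$ by hypothesis \eqref{u1}, obtains $R\in C(0,t_0;L^2(\mathbb{R}))$ and the bound $\mathbb{N}_2(t_0)\le E_1$ from classical parabolic theory, and only afterwards identifies $\partial_1 R=\bar r_1$ by uniqueness; $Z$ is then produced from the hyperbolic equation $\partial_t Z+\partial_1 R=0$ with $Z_0\in L^2(\mathbb{R})$, and $\partial_1 Z=\bar z$ is verified a posteriori. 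To repair your argument you would need either to adopt this construction (following Section 6 of Yuan, as the paper does) or to give an independent proof that the primitives of $\bar z(\cdot,t)$ and $\bar r_1(\cdot,t)$ remain in $L^2(\mathbb{R})$ for $t>0$.
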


\begin{proof}
It is standard (see \cite{Matsumura-Nishida}) to prove that there exists some $t_0>0$ such that if the initial data satisfies \eqref{u1}, then the problem \eqref{E:5.3}-\eqref{E:1.3c} admits a local solution $(z, {\bf w}, H)(x,t) \in X_{t_0}$, which is periodic in the transverse directions $x^{\prime} \in \mathbb T^2$. 
Then it follows from ${\bf r}={\bf m}-\tilde{\bf m}=\rho {\bf w}+\tilde{\bf u}z$ that $(z, {\bf r}, H)(x,t) \in X_{t_0}$ is the smooth solution to the corresponding Cauchy problem of \eqref{E:1.6}. Thus, it suffices to show that the existence of anti-derivative $(Z, R) \in C\left(0, t_0 ; H^5(\mathbb{R})\right)$ with $N_2(t_0)\leq E_1$.

We rewrite the first two equations of \eqref{E:2.2} as 
\begin{align}\label{i3}
	\left\{\begin{array}{l}
		\partial_t \bar z+\partial_1 \bar r_1=0, \\
		\partial_t \bar r_1-\partial_1\left(\frac{\partial_1\bar r_1}{\tilde \rho}\right)=\partial_1K,
	\end{array}\right.
\end{align}
where 
\begin{align*}
		K=\int_{\mathbb T^2}\left(\partial_1w_1-\frac{\partial_1r_1}{\tilde \rho}\right)dx^{\prime}- \int_{\mathbb{T}^2}\left(\frac{m_1^2}{\rho}-\frac{ \tilde{m}_1^2}{\tilde{\rho}}\right) d x^{\prime}-(T+1) \bar z -\partial_1^2\bar H+\overline{L_1}.
\end{align*}
And from Lemma \ref{l1} and $\eqref{E:2.2}_3$ we get $\bar H \in C\left(0, t_0 ; H^6(\mathbb R)\right)$, which yields that $K\in C\left(0, t_0 ; H^3(\mathbb R)\right)$. Then following the framework of Section 6 in \cite{Yuan}, we can verify that $R \in C\left(0, t_0 ; H^4(\mathbb{R})\right)$ is the solution to a Cauchy problem for a linear parabolic equation and satisfies $\partial_1R= \bar r_1$, as well as $Z \in C\left(0, t_0 ; H^3(\mathbb{R})\right)$ is the solution to a Cauchy problem for a linear hyperbolic equation and $\partial_1Z=\bar z$,
where we omit the details. Thus it follows from $(z, {\bf w}, H)(x,t) \in X_{t_0}$ that $(Z,R) \in C\left(0, t_0 ; H^5(\mathbb{R})\right)$.
Furthermore, it follows from the classical parabolic theory that $\mathbb{N}_2(t_0)\leq E_1$ for some positive constant $E_1$. Thus, the proof is completed.
\end{proof}

Since the local existence of the
solution of \eqref{E:5.3}-\eqref{E:1.3c} has been shown in Theorem \ref{le}, it remains to establish
the {\it a priori} estimates for the solution in what follows.
For any $T\in (0,+\infty)$, define
\begin{align}\label{ass}
\mathbb{N}(T):=\sup_{t \in[0, T]}\left(\left\|Z, R\right\|_{L^2(\mathbb{R})}+\|z, {\bf w}, H_t\|_{H^4(\Omega)}+\|H\|_{H^5(\Omega)}\right)=\nu\ll 1,
\end{align}
then it follows from Lemma \ref{de} that
\begin{align}\label{E:3.1}
	 \sup_{t \in[0, T]}\|z, {\bf w}, H, H_t\|_{w^{2, \infty}(\Omega)} &\lesssim \sup_{t \in[0, T]}\left\{\|\nabla(z, {\bf w}, H, H_t)\|_{H^{2}(\Omega)}^{\frac{1}{2}}\|z, {\bf w}, H, H_t\|_{H^{2}(\Omega)}^{\frac{1}{2}}+\|\nabla(z, {\bf w}, H, H_t)\|_{H^{2}(\Omega)}\right. \notag\\
	&\quad\quad \quad\quad\quad\,\, \left.+\left\|\nabla^2(z, {\bf w}, H, H_t)\right\|_{H^{2}(\Omega)}^{\frac{3}{4}}\|z, {\bf w}, H, H_t\|_{H^{2}(\Omega)}^{\frac{1}{4}}\right\}
	\notag\\
	&\lesssim \nu.
\end{align}
Note that ${\bf r}={\bf m}-\tilde{{\bf m}}=\rho {\bf u}-\tilde{\rho} \tilde{{\bf u}}=\rho{\bf w}+z \tilde{{\bf u}}$, which leads to
$$
\sup_{t \in[0, T]}\|{\bf r}\|_{w^{2, \infty}(\Omega)} \lesssim \sup_{t \in[0, T]}\|z, {\bf w}\|_{w^{2, \infty}(\Omega)} \lesssim \nu.
$$
Combining Sobolev inequality and Lemma \ref{l1}, one get
\begin{align}
	\sup_{t \in[0, T]}\|Z, R, \bar H\|_{w^{4, \infty}(\mathbb{R})} &\lesssim \sup_{t \in(0, T)}\|Z, R, \bar H\|_{H^{5}(\mathbb{R})}
	\notag\\
	&\lesssim \sup_{t \in[0, T]}\left(\|Z, R, \bar H\|_{L^{2}(\mathbb{R})}+\|\bar z, \bar r_1, \partial_1 \bar H\|_{H^{4}(\mathbb{R})}\right)
	\notag\\
	&\lesssim \sup_{t \in[0, T]}\left(\|Z, R\|_{L^{2}(\mathbb{R})}+\|H\|_{L^{2}(\Omega)}+\|z, r_1, \partial_1 H\|_{H^{4}(\Omega)}\right)\lesssim \nu,
\end{align}
where we have used the fact $\partial_1 \bar H=\overline{\partial_1 H}$.

\section{the estimates of 1-d zero mode}\label{s2}
This section is devoted to establishing the estimates of 1-d zero mode $(Z, R, \bar H)(x_1, t)$ of the solution $(z, {\bf w}, H)(x, t)$ to the problem \eqref{E:5.3}-\eqref{E:1.3c} under the {\it a priori} assumption \eqref{ass}.
Recall that \eqref{cs}, one get 
$$
\alpha:=T+1-\tilde u_1^2\geq \frac12(T+1)>0.
$$
\begin{lemma}\label{L:4.1}
For any $T>0$, assume that $(z, {\bf w}, H)(x,t) \in X_{T}$ is the solution to the problem \eqref{E:5.3}-\eqref{E:1.3c}. If $\delta$ and $\nu$ are small, then it holds that
	\begin{align}\label{E:4.11}
		&\|Z,R\|_{L^2(\mathbb{R})}^2+\|\bar H\|_{H^1(\mathbb{R})}^2+\int_0^t \|\sqrt{-\partial_1\tilde u_1}R, \partial_1R\|_{L^2(\mathbb{R})}^2d\tau
		\notag\\
		\lesssim &\mathbb{N}^2(0)+\nu \|\nabla H\|_{L^2(\Omega)}^2
+(\delta+\nu) \int_0^t\left(\left\|\partial_1 Z\right\|_{L^2(\mathbb{R})}^2+\left\|\bar H\right\|_{H^{1}(\mathbb{R})}^2+\|z, r_1, H_t\|_{L^2(\Omega)}^2+\left\|H\right\|_{H^1(\Omega)}^2\right) d \tau. 
\end{align}
\end{lemma}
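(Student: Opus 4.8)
The plan is to run a weighted $L^2$ energy estimate on the anti-derivative system \eqref{E:4.8}, closing the potential coupling through the elliptic equation $\eqref{E:4.8}_3$. First I would test the second equation of \eqref{E:4.8} against $R$ and integrate over $\mathbb{R}$, using $\partial_t Z=-\partial_1 R$ from $\eqref{E:4.8}_1$ to pair the term $\alpha\,\partial_1 Z$ (with $\alpha=T+1-\tilde u_1^2$) against $\alpha Z\,\partial_t Z$, so that these combine into $\tfrac{d}{dt}\int\tfrac{\alpha}{2}Z^2$ up to a commutator $\tfrac12\int\partial_t\alpha\,Z^2$. The convection term yields $\int 2\tilde u_1 R\,\partial_1 R=-\int\partial_1\tilde u_1\,R^2$, which by the profile monotonicity $\partial_1\tilde u_1=\tilde u_{1\xi}<0$ of Lemma \ref{nl} is exactly the good shock dissipation $\|\sqrt{-\partial_1\tilde u_1}\,R\|_{L^2(\mathbb R)}^2$; the viscous term $-\partial_1\big[\tfrac1{\tilde\rho}(\partial_1 R-\tilde u_1\partial_1 Z)\big]$ gives, after one integration by parts, the dissipation $\|\tilde\rho^{-1/2}\partial_1 R\|_{L^2}^2$ together with a cross term $\int\tfrac{\tilde u_1}{\tilde\rho}\partial_1 R\,\partial_1 Z$ of size $O(\delta)$, which is split between the $\partial_1 R$ dissipation and the $(\delta+\nu)\int_0^t\|\partial_1 Z\|_{L^2}^2$ term on the right. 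Since $\alpha,\tilde\rho,\tilde u_1$ are functions of $\xi=x_1-st$, their derivatives are $O(\delta^2)$ by Lemma \ref{nl}, so all commutator terms are harmless.

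The crux is the potential contribution $\partial_1^2\bar H$ in $\eqref{E:4.8}_2$, which after testing against $R$ and integrating by parts equals $\int\partial_1 R\,\partial_1\bar H$; this is one derivative too strong to treat as a perturbation, and a bare Young estimate would cost an inadmissible $O(1)\int_0^t\|\partial_1\bar H\|^2$. Instead I would use $\partial_1 R=-\partial_t Z$ and $\partial_1 Z=\bar z$ to rewrite $\int\partial_1 R\,\partial_1\bar H=\int\partial_t\bar z\,\bar H$, and then substitute the elliptic relation $\bar z=-\partial_1^2\bar H+e^{\tilde E}\bar H+\overline{N_2}$ from $\eqref{E:4.8}_3$, turning the coupling into the perfect time derivative
\[
\int_{\mathbb R}\partial_t\bar z\,\bar H=\tfrac12\frac{d}{dt}\Big(\|\partial_1\bar H\|_{L^2}^2+\int_{\mathbb R}e^{\tilde E}\bar H^2\Big)+\tfrac12\int_{\mathbb R}(\partial_t e^{\tilde E})\bar H^2+\int_{\mathbb R}\partial_t\overline{N_2}\,\bar H,
\]
whose leftover terms are genuinely small because $\partial_t e^{\tilde E}=e^{\tilde E}\tilde E_t=O(\delta^2)$ and $\overline{N_2}$ is quadratic in $H$. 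Absorbing the time derivative into the energy produces the modified functional $\mathcal E=\int(\tfrac\alpha2 Z^2+\tfrac12 R^2)-\tfrac12\|\partial_1\bar H\|_{L^2}^2-\tfrac12\int e^{\tilde E}\bar H^2$. Testing $\eqref{E:4.8}_3$ against $\bar H$ gives $\|\partial_1\bar H\|_{L^2}^2+\int e^{\tilde E}\bar H^2=\int\bar z\bar H-\int\overline{N_2}\bar H=-\int Z\,\partial_1\bar H-\int\overline{N_2}\bar H$; hence $\|\bar H\|_{H^1(\mathbb R)}^2\lesssim\|Z\|_{L^2}^2+(\text{small quadratic})$, which supplies the left-hand $\|\bar H\|_{H^1}^2$ of \eqref{E:4.11}, and also $\mathcal E\gtrsim\tfrac{T-\tilde u_1^2}{2}\|Z\|_{L^2}^2+\tfrac12\|R\|_{L^2}^2$. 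Coercivity therefore rests on the reduced coefficient $\alpha-1=T-\tilde u_1^2>0$: the quasineutral coupling lowers the effective sound speed by one, but since $\tilde u_1^2=O(\delta^2)$ this stays positive.

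It then remains to control the nonlinear sources $\partial_1 f_2-f_1+\overline{L_1}$, with $f_1,f_2$ from \eqref{E:4.9}--\eqref{E:4.10} and $\overline{L_1},\overline{N_2}$ from \eqref{E:4.7-2}--\eqref{E:4.7}. For $\int R\,\partial_1 f_2$ I integrate by parts onto $\partial_1 R$ and use $|f_2|\lesssim\int_{\mathbb T^2}(|z|^2+|r_1|^2)\,dx'$; the delicate point is $\int R f_1$, where a direct Cauchy--Schwarz would leave an unclosable $\|R\|_{L^2}^2$ (there is no $\|R\|_{L^2}^2$ dissipation). I would instead pair in $L^\infty$--$L^1$, invoking the \emph{a priori} bound $\|R\|_{L^\infty}\lesssim\nu$ together with $\|f_1\|_{L^1(\mathbb R)}\lesssim\|z,r_1\|_{L^2(\Omega)}^2$, which yields a contribution $\lesssim\nu\|z,r_1\|_{L^2(\Omega)}^2$. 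The electric-field term $\overline{L_1}$ splits into a linear piece with coefficient $\partial_1\tilde E=O(\delta^2)$ and a quadratic piece $\tfrac12\int_{\mathbb T^2}(|\partial_1 H|^2-|\partial_2 H|^2-|\partial_3 H|^2)\,dx'$, and the latter, again paired against $\|R\|_{L^\infty}$, is what produces the $\nu\|\nabla H\|_{L^2(\Omega)}^2$ control on the right of \eqref{E:4.11}; the remaining quadratic remainders fall under the $(\delta+\nu)\int_0^t(\cdots)\,d\tau$ terms. Integrating the energy identity in time, using $\mathcal E(0)\lesssim\mathbb N^2(0)$ and the smallness of $\delta,\nu$ to absorb the $O(\delta+\nu)$ pieces of $\|\partial_1 R\|^2$, $\|\bar H\|_{H^1}^2$ and the commutators into the dissipation and the right-hand side, gives \eqref{E:4.11}.

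I expect the main obstacle to be precisely this potential coupling $\partial_1^2\bar H$: it can only be tamed by combining (i) the elliptic structure of $\eqref{E:4.8}_3$, which trades the two extra derivatives of $\bar H$ for $\bar z=\partial_1 Z$, with (ii) the perfect-time-derivative reorganization, which simultaneously reproduces the left-hand $\|\bar H\|_{H^1}^2$ and confines all residual potential terms to order $\delta+\nu$. The secondary subtle point is checking that the effective energy $\mathcal E$ stays coercive after the coupling shifts the $Z$-weight from $T+1-\tilde u_1^2$ down to $T-\tilde u_1^2$.
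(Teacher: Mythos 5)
Your overall strategy coincides with the paper's: an energy estimate on the anti-derivative system \eqref{E:4.8}, conversion of the coupling $\partial_1^2\bar H$ into a perfect time derivative through $\partial_1 R=-\partial_t Z$, $\partial_1Z=\bar z$ and the elliptic relation $\eqref{E:4.8}_3$, and $L^\infty$--$L^1$ pairing of $R$ against $f_1,f_2,\overline{L_1},\overline{N_2}$ using $\|f_1\|_{L^1(\mathbb R)}+\|f_2\|_{L^1(\mathbb R)}\lesssim\|z,r_1\|_{L^2(\Omega)}^2$. Your variant of the $\bar H$-bookkeeping (integrating by parts once more so the $\bar H$-energy enters with a minus sign, then restoring coercivity from the elliptic identity $\|\partial_1\bar H\|_{L^2}^2+\int e^{\tilde E}\bar H^2=-\int Z\partial_1\bar H-\int\overline{N_2}\bar H$ at the cost of lowering the $Z$-weight from $\alpha$ to $\alpha-1=T-\tilde u_1^2>0$) is a legitimate alternative to the paper's choice of keeping the cross term $\frac{\partial_1\bar H\,Z}{T+1}$ in the energy and checking a discriminant.

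There is, however, one genuine gap: your choice of multipliers ($R$ for $\eqref{E:4.8}_2$ and, implicitly, $\alpha Z$ for $\eqref{E:4.8}_1$) is not harmless. Pairing $\int\alpha\,\partial_1Z\cdot R$ with $\tfrac{d}{dt}\int\tfrac{\alpha}{2}Z^2$ generates the commutators $\tfrac12\int\alpha_t Z^2$ and $\int\partial_1\alpha\,ZR$, with $\partial_1\alpha=-2\tilde u_1\partial_1\tilde u_1$ and $\alpha_t=2s\,\tilde u_1\partial_1\tilde u_1$. These produce, after time integration, terms of the form $\delta\int_0^t\int_{\mathbb R}|\partial_1\tilde u_1|\,|Z|^2\,dx_1\,d\tau$ (the $ZR$ piece can only be half-absorbed into $\|\sqrt{-\partial_1\tilde u_1}R\|_{L^2}^2$, leaving the same weighted $Z^2$ integral). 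No dissipation of the form $\|\sqrt{-\partial_1\tilde u_1}\,Z\|_{L^2}^2$ or $\|Z\|_{L^2}^2$ is available, and the right-hand side of \eqref{E:4.11} admits only $(\delta+\nu)\int_0^t\|\partial_1Z\|_{L^2}^2$; bounding $\int|\partial_1\tilde u_1|Z^2\lesssim\delta\|Z\|_{L^2}\|\partial_1Z\|_{L^2}$ still leaves an unabsorbed $\int_0^t\|Z\|_{L^2}^2\,d\tau$, which grows linearly in $t$ under the \emph{a priori} bound and cannot be closed by Gronwall without losing uniformity in time. The paper sidesteps this entirely by multiplying $\eqref{E:4.8}_1$ by $Z$ and $\eqref{E:4.8}_2$ by $R/\alpha$, so that the $Z$--$R$ cross terms combine into the exact flux $\partial_1(-ZR)$ with \emph{no} commutator, and the only zeroth-order quadratic term is $\beta R^2$ with $\beta\gtrsim-\partial_1\tilde u_1>0$. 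Your argument goes through verbatim once you adopt that normalization; as written, the assertion that ``all commutator terms are harmless'' is the one step that fails.
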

\begin{proof}
	Multiplying $\eqref{E:4.8}_1$ and $\eqref{E:4.8}_2$ by $Z$ and $\frac{R}{\alpha}$ respectively and adding them up, we have
\begin{align}\label{E:4.15}
	&  \partial_t\left(\frac{Z^2}{2}+\frac{R^2}{2\alpha}\right)  +\beta R^2+\frac{1}{\alpha \tilde \rho}(\partial_1R)^2
	 -\frac{\partial_1 \bar H \partial_1 R}{T+1}
	 \notag\\
	 =&\partial_1(\cdots)+\underbrace{\frac{\partial_1\alpha}{\tilde{\rho}\alpha^2}R\left(\partial_1 R-\tilde{u}_1 \partial_1 Z\right)+\frac{\tilde u_1}{\alpha \tilde \rho}\partial_1 Z\partial_1 R-\partial_1 \bar H\left[\partial_1 R\left(\frac{1}{\alpha}-\frac{1}{T+1}\right)+R\partial_1\left(\frac{1}{\alpha}\right)\right]}_{I_1}
	 \notag\\
	 &\underbrace{-\left(f_2 \partial_1\left(\frac{R}{\alpha}\right)+f_1 \frac{R}{\alpha}\right)}_{I_2}+\overline{L_1} \frac{R}{\alpha},
\end{align}
where $(\cdots)=-Z R-\frac{\tilde{u}_1}{\alpha} R^2+\frac{1}{\tilde{\rho}}\left(\partial_1 R-\tilde{u}_1 \partial_1 Z\right)\frac{R}{\alpha}-\frac{R\partial_1 \bar H}{\alpha}+f_2\frac{R}{\alpha}$ and 
$
\beta=-\left(\frac{1}{2 \alpha}\right)_t-\partial_1\left(\frac{\tilde u_1}{\alpha}\right).
$
It follows from $\eqref{E:4.8}_1$ and $\eqref{E:4.8}_3$ that
\begin{align}\label{E:4.16}
-\frac{\partial_1 \bar H \partial_1 R}{T+1}
=&\left(\frac{\partial_1 \bar H Z}{T+1}\right)_t-\partial_1\left(\frac{\bar H_t Z}{T+1}\right)+\frac{\bar H_t }{T+1}\bar z
\notag\\
=&\left(\frac{\partial_1 \bar H Z}{T+1}\right)_t-\partial_1\left(\frac{\bar H_t Z}{T+1}\right)+\frac{\bar H_t }{T+1}\left(- \partial_1^2 \bar H+e^{\tilde{E}} \bar H+\overline{N_2}\right)
\notag\\
=&\left(\frac{\partial_1 \bar H Z}{T+1}+\frac{e^{\tilde E}(\bar H)^2 }{2(T+1)}+\frac{(\partial_1 \bar H)^2 }{2(T+1)}+\frac{\bar H\overline{N_2}}{T+1}\right)_t+\partial_1(\cdots)-\underbrace{\frac{e^{\tilde E}\tilde E_t(\bar H)^2 }{2(T+1)}}_{I_3}-\underbrace{\frac{\bar HN_{2t}^{od}}{T+1}}_{I_4},
\end{align}
where 
$(\cdots)=-\frac{\bar H_t Z}{T+1}-\frac{\bar H_t\partial_1\bar H}{T+1}$. Substituting \eqref{E:4.16} into \eqref{E:4.15} and integrating the resultant over $\mathbb{R}$ gives that
\begin{align}\label{E:4.19}
	&\quad \frac{d}{dt}\left[\int_{\mathbb{R}}\left(\frac{Z^2}{2}+\frac{R^2}{2 \alpha}+\frac{\partial_1 \bar H Z}{T+1}+\frac{e^{\tilde E}\left(\bar H\right)^2}{2(T+1)}+\frac{\left(\partial_1 \bar H\right)^2}{2(T+1)}+\frac{\bar H \overline{N_2}}{T+1}\right) d x_1\right]
	\notag\\
	&+\int_{\mathbb{R}}\beta R^2dx_1+\int_{\mathbb{R}}\frac{1}{\alpha \tilde \rho}(\partial_1R)^2dx_1\leq \sum_{i=1}^{4}\|I_i\|_{L^1(\mathbb{R})}+\Big\|\frac{\overline{L_1}R}{\alpha}\Big\|_{L^1({\mathbb{R}})}.
\end{align}
It is worth to pointing out that 
\begin{align}
\beta=\frac{\alpha_t}{2\alpha^2}+\frac{\tilde u_1\partial_1\alpha}{\alpha^2}-\frac{\partial_1\tilde u_1}{\alpha}=\frac{-\partial_1\tilde u_1}{\alpha}\left[1+\frac{\tilde u_1(2\tilde u_1-s)}{\alpha}\right]\gtrsim -\partial_1\tilde u_1>0.\nonumber
\end{align}

We now estimate each $I_i$ from $i=1$ to $4$. Firstly, by the smallness of $\|\tilde u_1\|_{W^{1,\infty}(\mathbb R)}$ in Lemma \ref{nl} we have
\begin{align}
\|I_1\|_{L^1(\mathbb{R})}+	\|I_3\|_{L^1(\mathbb{R})}
\lesssim \delta \|\sqrt{-\partial_1\tilde u_1}R, \partial_1R, \partial_1Z, \bar H, \partial_1\bar H\|_{L^2(\mathbb{R})}^2.\label{E:4.17}
\end{align}	
It follows from \eqref{E:4.7-2} and \eqref{E:4.9}-\eqref{E:4.10} that 
	\begin{align}\label{E:4.21}
		\|I_2\|_{L^1(\mathbb{R})}
		\leq &\int_{\mathbb{R}}|f_2|\Big|\partial_1\left(\frac{R}{\alpha}\right)\Big|dx_1+\int_{\mathbb{R}}\frac{|f_1R|}{\alpha}dx_1
		\lesssim \|R\|_{W^{1,\infty}(\mathbb{R})}\|z,r_1\|_{L^2(\Omega)}^2	\lesssim \nu\|z,r_1\|_{L^2(\Omega)}^2
	\end{align}	
	and
		\begin{align}
	\Big\|\frac{\overline{L_1}R}{\alpha}\Big\|_{L^1({\mathbb{R}})}=&\int_{\mathbb{R}}\frac{1}{2}\Big|\left[\int_{\mathbb{T}^2}\left((\partial_1E)^2-(\partial_1\tilde E)^2\right)dx^{\prime}-\int_{\mathbb{T}^2}\left((\partial_2H)^2+(\partial_3 H)^2\right)dx^{\prime}\right]\frac{R}{\alpha}\Big|dx_1
	\notag\\
	\lesssim& \|R\|_{L^{\infty}(\mathbb{R})}\|\nabla H\|_{L^2(\Omega)}^2+\int_{\mathbb{R}}\int_{\mathbb{T}^2}|\partial_1H|dx^{\prime}|\partial_1\tilde E R|dx_1
		\notag\\
	\lesssim& \|R\|_{L^{\infty}(\mathbb{R})}\|\nabla H\|_{L^2(\Omega)}^2+\int_{\mathbb{R}}\int_{\mathbb{T}^2}|\partial_1H|dx^{\prime}|\partial_1\tilde u_1 R|dx_1
			\notag\\
	\lesssim& \|R\|_{L^{\infty}(\mathbb{R})}\|\nabla H\|_{L^2(\Omega)}^2+\epsilon\|\sqrt{|\partial_1 \tilde u_1|}R\|_{L^2(\mathbb{R})}^2+|\partial_1 \tilde u_1|_{L^{\infty}}\|\partial_1 H\|_{L^2(\Omega)}^2
	\notag\\
		\lesssim& \epsilon\|\sqrt{|\partial_1 \tilde u_1|}R\|_{L^2(\mathbb{R})}^2+(\delta+\nu)\|\nabla H\|_{L^2(\Omega)}^2,
	\end{align}		
	where $\epsilon>0$ is a small constant. In addition, from \eqref{E:4.7} we have
	\begin{align}\label{E:4.22}
\|I_4\|_{L^1(\mathbb{R})}\lesssim & \int_{\mathbb{R}}|\bar H|\left[\Big|(e^{\tilde E})_t\Big|\int_{\mathbb{T}^2}(e^H-1-H)dx^{\prime}+e^{\tilde E}\int_{\mathbb{T}^2}|(e^H-1)H_t|dx^{\prime}\right]dx_1
\notag\\
\lesssim &\|\bar H\|_{L^{\infty}(\mathbb{R})}\left(\| H\|_{L^2(\Omega)}^2+\| H_t\|_{L^2(\Omega)}^2\right)
\lesssim \nu \| H, H_t\|_{L^2(\Omega)}^2.
\end{align}	
Substituting \eqref{E:4.17}-\eqref{E:4.22} into \eqref{E:4.19} and integrating the results over $[0,t]$ gives that 
	\begin{align}\label{E:4.12}
	& \int_{\mathbb{R}}\left(\frac{Z^2}{2}+\frac{R^2}{2 \alpha}+\frac{\partial_1 \bar H Z}{T+1}+\frac{e^{\tilde E}\left(\bar H\right)^2}{2(T+1)}+\frac{\left(\partial_1 \bar H\right)^2}{2(T+1)}+\frac{\bar H \overline{N_2}}{T+1}\right) d x_1 +\int_0^t \|\sqrt{-\partial_1\tilde u_1}R, \partial_1R\|_{L^2(\mathbb{R})}^2d\tau\notag\\
	\lesssim& \mathbb{N}^2(0)+(\delta+\nu) \int_0^t\left(\left\|\partial_1 Z\right\|_{L^2(\mathbb{R})}^2+\left\|\bar H\right\|_{H^{1}(\mathbb{R})}^2+\|z, r_1, H_t\|_{L^2(\Omega)}^2+\left\|H\right\|_{H^1(\Omega)}^2\right) d \tau,
\end{align}
where we have used the smallness of $\delta$ and $\nu$.
It follows from 
$2^2-4(T+1)<0$ that
\begin{equation}\label{E:4.13}
\int_{\mathbb{R}}\left(\frac{Z^2}{2}+\frac{\partial_1 \bar H Z}{T+1}+\frac{1}{2(T+1)}\left(\partial_1 \bar H\right)^2\right) d x_1\gtrsim \left\| Z\right\|_{L^2(\mathbb{R})}^2+\left\|\partial_1\bar H\right\|_{L^{2}(\mathbb{R})}^2
\end{equation}
and from \eqref{E:3.1} that
\begin{align}\label{E:4.14}
	\int_{\mathbb{R}}\frac{\bar H \overline{N_2}}{T+1} d x_1
	\lesssim \|\bar H\|_{L^{\infty}(\mathbb{R})}\|H\|_{L^2(\Omega)}^2	\lesssim \nu\|H\|_{L^2(\Omega)}^2.
\end{align}
Substituting \eqref{E:4.13}-\eqref{E:4.14} into \eqref{E:4.12} yields \eqref{E:4.11} directly. Thus the proof is completed.
\end{proof}

\begin{lemma}
For any $T>0$, assume that $(z, {\bf w}, H)(x,t) \in X_{T}$ is the solution to the problem \eqref{E:5.3}-\eqref{E:1.3c}. If $\delta$ and $\nu$ are small, then it holds that
\begin{align}\label{E:4.33}
	&\int_{\mathbb{R}}\left(R\partial_1Z+\frac{(\partial_1Z)^2}{2\tilde\rho}\right)dx_1+\int_0^t\|\partial_1Z\|_{L^2(\mathbb{R})}^2d\tau+\int_0^t\|\partial_1\bar H\|_{H^1(\mathbb{R})}^2d\tau
	\notag\\
	\lesssim&\mathbb{N}^2(0)+\int_0^t\|\partial_1R\|_{L^2(\mathbb{R})}^2d\tau+(\delta+\nu)\int_0^t(\|H\|_{H^1(\Omega)}^2+\|\bar H\|_{L^2(\mathbb{R})}^2)d\tau+\nu\int_0^t\|z, r_1\|_{L^2(\Omega)}^2d\tau.
	\end{align}
\end{lemma}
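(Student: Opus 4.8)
The plan is to multiply the momentum equation $\eqref{E:4.8}_2$ by $\partial_1 Z$ and integrate over $\mathbb R$, so as to produce the weighted dissipation $\|\partial_1 Z\|_{L^2(\mathbb R)}^2$ that Lemma \ref{L:4.1} could not see. First I would treat the time-derivative term through the continuity relation $\partial_t Z=-\partial_1 R$ from $\eqref{E:4.8}_1$, writing $\int_{\mathbb R}\partial_t R\,\partial_1 Z\,dx_1=\frac{d}{dt}\int_{\mathbb R}R\,\partial_1 Z\,dx_1-\|\partial_1 R\|_{L^2(\mathbb R)}^2$; this accounts both for the boundary density $R\,\partial_1 Z$ in the statement and for the $\int_0^t\|\partial_1 R\|_{L^2}^2\,d\tau$ appearing on the right. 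The viscous term $-\partial_1[\frac1{\tilde\rho}(\partial_1 R-\tilde u_1\partial_1 Z)]$, after one integration by parts and a second use of $\partial_t Z=-\partial_1 R$, generates the remaining energy density $\frac{(\partial_1 Z)^2}{2\tilde\rho}$ up to commutator terms carrying a factor $\partial_1\tilde\rho$, $\partial_t\tilde\rho$ or $\partial_1\tilde u_1$, each $O(\delta)$ by Lemma \ref{nl}. The convective factor $(T+1-\tilde u_1^2)\partial_1 Z=\alpha\partial_1 Z$ contributes the coercive term $\alpha\|\partial_1 Z\|_{L^2}^2$ with $\alpha\ge\frac12(T+1)$, while $2\tilde u_1\partial_1 R\,\partial_1 Z$ is $O(\delta)(\|\partial_1 R\|^2+\|\partial_1 Z\|^2)$.

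The main obstacle is the potential term $\int_{\mathbb R}\partial_1^2\bar H\,\partial_1 Z\,dx_1$, which must not destroy the coercivity of $\alpha\|\partial_1 Z\|^2$. Here I would invoke the Poisson equation $\eqref{E:4.8}_3$ in the form $\partial_1 Z=\bar z=-\partial_1^2\bar H+e^{\tilde E}\bar H+\overline{N_2}$ to rewrite
\[
\int_{\mathbb R}\partial_1^2\bar H\,\partial_1 Z\,dx_1=-\|\partial_1 Z\|_{L^2}^2+\int_{\mathbb R}e^{\tilde E}\bar H\,\partial_1 Z\,dx_1+\int_{\mathbb R}\overline{N_2}\,\partial_1 Z\,dx_1.
\]
The first term combines with $\alpha\|\partial_1 Z\|^2$ to leave the coefficient $\alpha-1=T-\tilde u_1^2\gtrsim T>0$ for small $\delta$, and this is exactly where the positivity of the temperature $T$, together with \eqref{cs}, is used. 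For the cross term I would substitute the Poisson equation once more and integrate by parts, obtaining $\int e^{\tilde E}\bar H\,\partial_1 Z=\int e^{\tilde E}(\partial_1\bar H)^2+\int e^{2\tilde E}\bar H^2$ modulo a factor $\partial_1 e^{\tilde E}=O(\delta^2)$ and the quadratic remainder $\overline{N_2}$; this is positive definite and supplies precisely the dissipation $\|\partial_1\bar H\|_{L^2}^2$ (and an extra favorable $\|\bar H\|_{L^2}^2$). Recognizing this hidden positivity, rather than crudely applying Young's inequality, which would leave an uncontrolled $O(1)$ multiple of $\|\bar H\|_{L^2}^2$ on the right, is the key point of the argument.

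It then remains to dispose of the inhomogeneities. The shock-profile terms all carry a derivative of $(\tilde\rho,\tilde u_1,\tilde E)$, hence a factor $O(\delta)$ by Lemma \ref{nl} and \eqref{cs}, and are absorbed into the coercive terms on the left or into $\int_0^t\|\partial_1 R\|_{L^2}^2\,d\tau$. The genuinely nonlinear terms $\partial_1 f_2-f_1$, $\overline{L_1}$ and $\overline{N_2}$ are quadratic in $(z,r_1,\nabla H)$; pairing them against $\partial_1 Z=\bar z$ and using the $L^\infty$ and $W^{1,\infty}$ bounds of \eqref{E:3.1} and Lemma \ref{l1}, together with $\|f_1\|_{L^1},\|f_2\|_{L^1}\lesssim\|z,r_1\|_{L^2}^2$, yields the contributions $\nu\|z,r_1\|_{L^2}^2$ and $(\delta+\nu)(\|H\|_{H^1(\Omega)}^2+\|\bar H\|_{L^2}^2)$, exactly in the spirit of the estimates \eqref{E:4.21}--\eqref{E:4.22}. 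Finally, to complete $\|\partial_1\bar H\|_{H^1}^2$ I would read off $\|\partial_1^2\bar H\|_{L^2}^2\lesssim\|\partial_1 Z\|_{L^2}^2+\|\bar H\|_{L^2}^2+\|\overline{N_2}\|_{L^2}^2$ directly from $\eqref{E:4.8}_3$. Collecting everything, choosing the auxiliary small parameter and then $\delta,\nu$ small, and integrating over $[0,t]$ gives \eqref{E:4.33}.
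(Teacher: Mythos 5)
Your proposal is correct and follows essentially the same route as the paper: test $\eqref{E:4.8}_2$ against $\partial_1 Z$, extract the energy density $R\partial_1Z+\frac{(\partial_1Z)^2}{2\tilde\rho}$ via $\partial_t Z=-\partial_1 R$, and use the Poisson equation to neutralize the cross term $\int_{\mathbb{R}}\partial_1^2\bar H\,\partial_1 Z\,dx_1$. The only cosmetic difference is that the paper adds $\eqref{E:4.8}_3$ tested against $-\partial_1^2\bar H$ and completes the square using $2^2-4\alpha<0$, whereas you substitute the Poisson equation into the cross term and use $\alpha-1=T-\tilde u_1^2>0$ (the same condition) to retain coercivity; both variants produce the dissipation of $\partial_1\bar H$, yours recovering $\|\partial_1^2\bar H\|_{L^2(\mathbb{R})}^2$ a posteriori from the Poisson equation.
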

\begin{proof}
By multiplying $\eqref{E:4.8}_2$ and $\eqref{E:4.8}_3$ by $\partial_1Z$ and $-\partial_1^2\bar H$ respectively and adding them up, we have
	\begin{align}\label{E:4.28}
	&\partial_t\left(R\partial_1Z+\frac{1}{2\tilde\rho}\left(\partial_1Z\right)^2\right)+\alpha\left(\partial_1Z\right)^2+\left(\partial_1^2\bar H\right)^2+2\partial_1^2\bar H\partial_1Z+e^{\tilde E}\left(\partial_1\bar H\right)^2
	\notag\\
	=&\partial_1(\cdots)+\left(\partial_1R\right)^2\underbrace{-\left(2\tilde u_1+\frac{\partial_1 \tilde{\rho}}{\tilde {\rho}^2}\right) \partial_1 R \partial_1 Z-\frac{1}{2}\left[\partial_1\left(\frac{\tilde u_1}{\tilde{\rho}}\right)-\partial_t\left(\frac{1}{\tilde  \rho}\right)\right]\left(\partial_1 Z\right)^2-\partial_1(e^{\tilde E})\bar H\partial_1\bar H}_{I_5}
	\notag\\
	&-\partial_1\bar H\partial_1\overline{N_2}-(f_2\partial_1^2Z+f_1\partial_1Z)+\overline{L_1}\partial_1Z,
	\end{align}
	where
	$(\cdots)=-R\partial_1R-\frac{\tilde u_1}{2 \tilde{\rho}} (\partial_1 Z)^2+e^{\tilde E}\bar H\partial_1\bar H+\overline{N_2}\partial_1\bar H+f_2\partial_1Z$ and we have used the fact from $\eqref{E:4.8}_1$ 
	that 
	\begin{align}
\partial_tR\partial_1Z=\partial_t\left(R\partial_1Z\right)-\left(\partial_1R\right)^2+\partial_1\left(R\partial_1R\right).\nonumber
\end{align}
	It follows from $2^2-4\alpha<0$ that 
	\begin{align}\label{E:4.31}
	\alpha\left(\partial_1Z\right)^2+\left(\partial_1^2\bar H\right)^2+2\partial_1^2\bar H\partial_1Z\gtrsim \left(\partial_1Z\right)^2+\left(\partial_1^2\bar H\right)^2.
	\end{align}
	The direct computations shows that 
	\begin{align}
	\|I_5\|_{L^1(\mathbb{R})}\lesssim \delta \left(\|\partial_1Z, \partial_1R\|_{L^{2}(\mathbb{R})}^2+\|\bar H\|_{H^{1}(\mathbb{R})}^2\right)
	\end{align}
		and 
	\begin{align}
	\|\partial_1\bar H\partial_1\overline{N_2}\|_{L^1(\mathbb{R})}\lesssim & \int_{\mathbb{R}}|\partial_1\bar H|\left[\Big|\partial_1(e^{\tilde E})\Big|\int_{\mathbb{T}^2}(e^H-1-H)dx^{\prime}+e^{\tilde E}\int_{\mathbb{T}^2}|(e^H-1)\partial_1H|dx^{\prime}\right]dx_1
	\notag\\
	\lesssim &\|\partial_1\bar H\|_{L^{\infty}(\mathbb{R})}\left(\| H\|_{L^2(\Omega)}^2+\| \nabla H\|_{L^2(\Omega)}^2\right)
	\lesssim \nu\| H\|_{H^1(\Omega)}^2.
	\end{align}	
	In addition, we have
	\begin{align}
	\|f_2\partial_1^2Z+f_1\partial_1Z\|_{L^1(\mathbb{R})}
	\lesssim \|\partial_1Z\|_{W^{1,\infty}(\mathbb{R})}\|z,r_1\|_{L^2(\Omega)}^2	\lesssim \nu\|z,r_1\|_{L^2(\Omega)}^2
	\end{align}	
	and
	\begin{align}\label{E:4.32}
	\int_{\mathbb{R}} |\overline{L_1}\partial_1Z|dx_1=&\int_{\mathbb{R}}\frac{1}{2}\Big|\left[\int_{\mathbb{T}^2}\left((\partial_1E)^2-(\partial_1\tilde E)^2\right)dx^{\prime}-\int_{\mathbb{T}^2}\left((\partial_2H)^2+(\partial_3 H)^2\right)dx^{\prime}\right]\partial_1Z\Big|dx_1
	\notag\\
	\lesssim& \|\partial_1Z\|_{L^{\infty}(\mathbb{R})}\|\nabla H\|_{L^2(\Omega)}^2+\int_{\mathbb{R}}\int_{\mathbb{T}^2}|\partial_1H|dx^{\prime}|\partial_1\tilde u_1 \partial_1Z|dx_1
	\notag\\
	\lesssim& \|\partial_1Z\|_{L^{\infty}(\mathbb{R})}\|\nabla H\|_{L^2(\Omega)}^2+|\partial_1 \tilde u_1|\left(\|\partial_1 H\|_{L^2(\Omega)}^2+\|\partial_1Z\|_{L^2(\mathbb{R})}\right)
	\notag\\
	\lesssim& (\delta+\nu)\left(\|\nabla H\|_{L^2(\Omega)}^2+\|\partial_1Z\|_{L^2(\mathbb{R})}\right).
	\end{align}	
	Integrating \eqref{E:4.28} over $\mathbb{R}\times [0,t]$ and using \eqref{E:4.31}-\eqref{E:4.32} gives \eqref{E:4.33}. Thus the proof is completed.
	\end{proof}

\begin{lemma}\label{L:4.3}
	For any $T>0$, assume that $(z, {\bf w}, H)(x,t) \in X_{T}$ is the solution to the problem \eqref{E:5.3}-\eqref{E:1.3c}. If $\delta$ and $\nu$ are small, then it holds that
		\begin{align}\label{E:4.40}
	\int_0^t\|\bar H\|_{H^1(\mathbb{R})}^2d\tau\lesssim \int_0^t\|\partial_1Z\|_{L^2(\mathbb{R})}^2d\tau+\nu \int_0^t\|\nabla H\|_{L^2(\Omega)}^2d\tau.
	\end{align}
		\end{lemma}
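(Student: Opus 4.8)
The plan is to exploit the elliptic, stationary-in-time structure of the Poisson equation for the zero mode. Since $\partial_1 Z=\bar z$ by \eqref{i1}, the third equation $\eqref{E:4.8}_3$ may be rewritten as
\[
-\partial_1^2\bar H+e^{\tilde E}\bar H=\partial_1 Z-\overline{N_2}.
\]
This is an identity at each fixed time, with no time derivative, so I would first derive a pointwise-in-$t$ bound for $\|\bar H\|_{H^1(\mathbb{R})}^2$ and then simply integrate over $[0,t]$. The natural test is to multiply by $\bar H$ and integrate over $\mathbb{R}$; using $\bar H(\cdot,t)\to 0$ as $x_1\to\pm\infty$, integration by parts gives
\[
\int_{\mathbb{R}}(\partial_1\bar H)^2\,dx_1+\int_{\mathbb{R}}e^{\tilde E}(\bar H)^2\,dx_1=\int_{\mathbb{R}}\partial_1 Z\,\bar H\,dx_1-\int_{\mathbb{R}}\overline{N_2}\,\bar H\,dx_1.
\]

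The crucial gain is the zeroth-order term $e^{\tilde E}\bar H$ in the Poisson equation: since $\tilde E$ is bounded by Lemma \ref{nl}, one has $e^{\tilde E}\ge c_0>0$, so the left-hand side controls the \emph{full} norm, $\int(\partial_1\bar H)^2+\int e^{\tilde E}(\bar H)^2\gtrsim\|\bar H\|_{H^1(\mathbb{R})}^2$. This supplies the $L^2$-coercivity of $\bar H$ that a pure Laplacian would not provide. The linear source is then harmless: by Young's inequality $\int\partial_1 Z\,\bar H\le\epsilon\|\bar H\|_{L^2(\mathbb{R})}^2+C_\epsilon\|\partial_1 Z\|_{L^2(\mathbb{R})}^2$, and the $\epsilon$-term is absorbed into the coercive left-hand side.

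The main work is the nonlinear term $\int\overline{N_2}\bar H$. Recalling \eqref{E:4.7} and the bound $|e^H-1-H|\lesssim|H|^2$ valid for $\|H\|_{L^\infty}\lesssim\nu$ small, I obtain the pointwise estimate $|\overline{N_2}(x_1)|\lesssim\int_{\mathbb{T}^2}|H|^2\,dx'$, whence $\int_{\mathbb{R}}|\overline{N_2}\bar H|\,dx_1\lesssim\|\bar H\|_{L^\infty(\mathbb{R})}\|H\|_{L^2(\Omega)}^2\lesssim\nu\|H\|_{L^2(\Omega)}^2$, using $\|\bar H\|_{L^\infty}\lesssim\nu$ from the a priori bounds established at the end of Section \ref{sw}. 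The key step is then to convert $\|H\|_{L^2(\Omega)}^2$ into the quantities on the right-hand side of \eqref{E:4.40}: I would use the orthogonal decomposition $\|H\|_{L^2(\Omega)}^2=\|\bar H\|_{L^2(\mathbb{R})}^2+\|H^{\neq}\|_{L^2(\Omega)}^2$ and invoke the transverse Poincaré-type inequality of Lemma \ref{l1}, namely $\|H^{\neq}\|_{L^2(\Omega)}\lesssim\|\nabla_{x'}H^{\neq}\|_{L^2(\Omega)}\le\|\nabla H\|_{L^2(\Omega)}$ (since $\nabla_{x'}\bar H=0$). The resulting $\nu\|\bar H\|_{L^2(\mathbb{R})}^2$ is absorbed into the left-hand side by the smallness of $\nu$, while $\nu\|\nabla H\|_{L^2(\Omega)}^2$ is exactly the second term in \eqref{E:4.40}.

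Collecting these estimates yields the pointwise bound $\|\bar H\|_{H^1(\mathbb{R})}^2\lesssim\|\partial_1 Z\|_{L^2(\mathbb{R})}^2+\nu\|\nabla H\|_{L^2(\Omega)}^2$, and integrating over $[0,t]$ gives \eqref{E:4.40}. The only delicate point is the final reduction: the nonlinearity naturally produces $\|H\|_{L^2(\Omega)}^2$ rather than $\|\nabla H\|$, which would be fatal, and it is precisely the transverse Poincaré inequality on $\mathbb{T}^2$ that lets the non-zero part be controlled by $\|\nabla H\|$ and the zero part be absorbed by the coercivity coming from the $e^{\tilde E}\bar H$ term.
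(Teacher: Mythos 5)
Your proposal is correct and follows essentially the same route as the paper: testing $\eqref{E:4.8}_3$ against $\bar H$, using the coercivity of the $e^{\tilde E}\bar H$ term, Young's inequality on $\int\partial_1 Z\,\bar H$, the quadratic bound on $\overline{N_2}$, and the splitting $\|H\|_{L^2(\Omega)}^2\lesssim\|\bar H\|_{L^2(\mathbb{R})}^2+\|\nabla H\|_{L^2(\Omega)}^2$ from Lemma \ref{l1} with absorption of the small $\bar H$ terms. The only (immaterial) difference is that you establish the bound pointwise in time before integrating, whereas the paper integrates over $\mathbb{R}\times[0,t]$ from the start.
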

\begin{proof}
	Multiplying $\eqref{E:4.8}_3$ by $-\bar H$ and integrating over $\mathbb{R}\times [0,t]$ leads to 
	\begin{align*}
\int_0^t\int_{\mathbb{R}}\left[\left(\partial_1\bar H\right)^2+e^{\tilde E}\left(\bar H\right)^2\right]
dx_1d\tau=\int_0^t\int_{\mathbb{R}}\partial_1Z\bar Hdx_1d\tau-\int_0^t\int_{\mathbb{R}}\overline{N_2}\bar Hdx_1d\tau.
	\end{align*}
	It follows that 
	\begin{align*}
\int_0^t\int_{\mathbb{R}}\partial_1Z\bar Hdx_1d\tau\leq \epsilon \int_0^t\|\bar H\|_{L^2(\mathbb{R})}^2d\tau+C_{\epsilon}\int_0^t\|\partial_1Z\|_{L^2(\mathbb{R})}^2d\tau
	\end{align*}
	and 
	\begin{align*}
\int_0^t\int_{\mathbb{R}}\overline{N_2}\bar Hdx_1d\tau\lesssim \|\bar H\|_{L^{\infty}}\int_0^t\|H\|_{L^2(\Omega)}^2d\tau\lesssim \nu \int_0^t\left(\|\bar H\|_{L^2(\mathbb{R})}^2+\|\nabla H\|_{L^2(\Omega)}^2\right)d\tau.
	\end{align*}
By choosing the constant $\epsilon>0$ small enough we get \eqref{E:4.40}. Thus the proof is completed.
	\end{proof}

Now, we take the procedure as $C_2(C_1\eqref{E:4.11}+\eqref{E:4.33})+\eqref{E:4.40}$ with some positive constants $C_1, C_2$ large enough and obtain the following lemma.
\begin{lemma}\label{L:4.4n}
For any $T>0$, assume that $(z, {\bf w}, H)(x,t) \in X_{T}$ is the solution to the problem \eqref{E:5.3}-\eqref{E:1.3c}. If $\delta$ and $\nu$ are small, then it holds that
\begin{align}\label{E:4.41}
		&\|R\|_{L^2(\mathbb{R})}^2+\|Z,\bar H\|_{H^1(\mathbb{R})}^2+\int_0^t \left(\|\sqrt{-\partial_1\tilde u_1}R,\partial_1R,\partial_1Z\|_{L^2(\mathbb{R})}^2+\|\bar H\|_{H^2(\mathbb{R})}^2\right)d\tau
		\notag\\
		\lesssim &\mathbb{N}^2(0)+\nu \|\nabla H\|_{L^2(\Omega)}^2
	 +(\delta+\nu) \int_0^t\left(\|z, r_1, H_t \|_{L^2(\Omega)}^2+\|H\|_{H^1(\Omega)}^2\right) d \tau. 
	\end{align}
\end{lemma}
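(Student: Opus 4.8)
The plan is to follow the route indicated just before the statement, namely to form the weighted combination $C_2\bigl(C_1\eqref{E:4.11}+\eqref{E:4.33}\bigr)+\eqref{E:4.40}$ and then absorb the coupling terms by choosing the constants in the correct order. The point is that none of the three preceding estimates is closed on its own: \eqref{E:4.11} controls $\|Z,R\|_{L^2(\mathbb R)}^2+\|\bar H\|_{H^1(\mathbb R)}^2$ together with $\int_0^t\|\sqrt{-\partial_1\tilde u_1}R,\partial_1 R\|_{L^2(\mathbb R)}^2\,d\tau$ but leaves $\int_0^t\|\partial_1 Z\|_{L^2(\mathbb R)}^2$ on the right with only a small factor $(\delta+\nu)$; \eqref{E:4.33} produces $\int_0^t\|\partial_1 Z\|_{L^2(\mathbb R)}^2$ and $\int_0^t\|\partial_1\bar H\|_{H^1(\mathbb R)}^2$ at the price of $\int_0^t\|\partial_1 R\|_{L^2(\mathbb R)}^2$ with coefficient one; and \eqref{E:4.40} supplies the missing $\int_0^t\|\bar H\|_{H^1(\mathbb R)}^2$ (in particular the zeroth-order piece $\int_0^t\|\bar H\|_{L^2(\mathbb R)}^2$) at the price of $\int_0^t\|\partial_1 Z\|_{L^2(\mathbb R)}^2$, again with coefficient one. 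Thus each coefficient-one ``price'' is a quantity that appears with a large multiplicative constant on the left-hand side of another estimate, which is exactly what lets the weighted sum close.

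First I would fix $C_1$: in $C_1\eqref{E:4.11}+\eqref{E:4.33}$ the left-hand side carries $C_1\int_0^t\|\partial_1 R\|_{L^2(\mathbb R)}^2$ while the right-hand side of \eqref{E:4.33} contributes $\int_0^t\|\partial_1 R\|_{L^2(\mathbb R)}^2$, so taking $C_1$ large leaves a net positive multiple of $\int_0^t\|\partial_1 R\|_{L^2(\mathbb R)}^2$. The same $C_1$ handles the instantaneous cross term $\int_{\mathbb R}\bigl(R\partial_1 Z+\tfrac{1}{2\tilde\rho}(\partial_1 Z)^2\bigr)dx_1$ from \eqref{E:4.33}: since $\tilde\rho$ is bounded above and below, Young's inequality gives $\int_{\mathbb R}\bigl(R\partial_1 Z+\tfrac{1}{2\tilde\rho}(\partial_1 Z)^2\bigr)dx_1\gtrsim \|\partial_1 Z\|_{L^2(\mathbb R)}^2-C\|R\|_{L^2(\mathbb R)}^2$, and the negative $\|R\|_{L^2(\mathbb R)}^2$ is dominated by the $C_1\|R\|_{L^2(\mathbb R)}^2$ available from \eqref{E:4.11} once $C_1$ is large, recovering the instantaneous $\|\partial_1 Z\|_{L^2(\mathbb R)}^2$, hence $\|Z\|_{H^1(\mathbb R)}^2$, on the left. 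Next I would fix $C_2$ large so that the coefficient-one term $\int_0^t\|\partial_1 Z\|_{L^2(\mathbb R)}^2$ fed to the right by \eqref{E:4.40} is absorbed into the $C_2\int_0^t\|\partial_1 Z\|_{L^2(\mathbb R)}^2$ now on the left. Finally, with $C_1,C_2$ frozen, I would take $\delta,\nu$ small (depending on $C_1,C_2$) to absorb the remaining $(\delta+\nu)$-terms in $\partial_1 Z$, $\partial_1\bar H$ and $\bar H$ into their left-hand counterparts; the genuinely three-dimensional terms $\|z,r_1,H_t\|_{L^2(\Omega)}^2$, $\|H\|_{H^1(\Omega)}^2$ and $\nu\|\nabla H\|_{L^2(\Omega)}^2$ are simply retained, as they cannot be bounded by one-dimensional zero-mode quantities and already match the right-hand side of \eqref{E:4.41}.

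One bookkeeping point worth isolating is the assembly of $\int_0^t\|\bar H\|_{H^2(\mathbb R)}^2$: the control of $\int_0^t\|\bar H\|_{H^1(\mathbb R)}^2$ comes from \eqref{E:4.40}, while \eqref{E:4.33} delivers $\int_0^t\|\partial_1\bar H\|_{H^1(\mathbb R)}^2=\int_0^t\bigl(\|\partial_1\bar H\|_{L^2(\mathbb R)}^2+\|\partial_1^2\bar H\|_{L^2(\mathbb R)}^2\bigr)\,d\tau$, and together they cover all of $\|\bar H\|_{L^2(\mathbb R)}^2+\|\partial_1\bar H\|_{L^2(\mathbb R)}^2+\|\partial_1^2\bar H\|_{L^2(\mathbb R)}^2$. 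I do not expect a deep difficulty; the real work is the ordering and the quantitative absorption, and the main thing to watch is that the two smallness mechanisms do not interfere — the coefficient-one prices must be killed by the \emph{large} constants $C_1,C_2$ \emph{before} $\delta,\nu$ are sent to zero, since $\delta,\nu$ may depend on $C_1,C_2$ but not conversely. Collecting the surviving terms and dividing by the overall positive constant then yields \eqref{E:4.41}.
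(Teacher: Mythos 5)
Your proposal is correct and is exactly the paper's argument: the paper "proves" this lemma with the single sentence preceding it, instructing the reader to take the combination $C_2\bigl(C_1\eqref{E:4.11}+\eqref{E:4.33}\bigr)+\eqref{E:4.40}$ with $C_1,C_2$ large, and you have filled in precisely the absorption bookkeeping that sentence leaves implicit (large constants first to kill the coefficient-one coupling terms, then $\delta,\nu$ small depending on $C_1,C_2$, Young's inequality on the cross term $R\,\partial_1Z$ to recover the instantaneous $\|\partial_1Z\|_{L^2(\mathbb{R})}^2$, and the assembly of $\|\bar H\|_{H^2(\mathbb{R})}^2$ from the two pieces). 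No gaps.
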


\section{the estimates of 3-d variables}\label{s3}
This section is devoted to establishing the estimates of the solution $(z, {\bf w}, H)(x, t)$ to the problem \eqref{E:5.3}-\eqref{E:1.3c} under the {\it a priori} assumption \eqref{ass}.
\begin{lemma}\label{L:5.1n}
For any $T>0$, assume that $(z, {\bf w}, H)(x,t) \in X_{T}$ is the solution to the problem \eqref{E:5.3}-\eqref{E:1.3c}. If $\delta$ and $\nu$ are small, then it holds that
	\begin{align}
&\|z, r_1\|_{L^2(\Omega)}^2\lesssim \|\partial_1 Z, \partial_1 R\|_{L^2(\mathbb{R})}^2+\|\nabla z, \nabla r_1\|_{L^2(\Omega)}^2, \label{E:5.1p}\\
		&\|\nabla r_1\|_{L^2(\Omega)} \lesssim(\delta+\nu)\left\|\partial_1 Z, \partial_1 R\right\|_{L^2(\mathbb{R})}+\left\|\nabla w_1\right\|_{L^2(\Omega)}+(\delta+v)\|\nabla z\|_{L^2(\Omega)}, \label{E:5.1}\\
		&\|w_1\|_{L^2(\Omega)} \lesssim\left\|\partial_1 Z, \partial_1 R\right\|_{L^2(\mathbb{R})}+\left\|\nabla w_1\right\|_{L^2(\Omega)}+(\delta+\nu)\|\nabla z\|_{L^2(\Omega)}. \label{E:5.2}
	\end{align}
\end{lemma}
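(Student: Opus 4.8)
The plan is to exploit the mode decomposition $f=\bar f+f^{\neq}$ together with the two facts from Lemma \ref{l1}: the $L^2$-bound $\|\bar f\|_{L^2(\mathbb R)}\lesssim\|f\|_{L^2(\Omega)}$ and, crucially, the Poincar\'e-type inequality $\|f^{\neq}\|_{L^2(\Omega)}\lesssim\|\nabla_{x^{\prime}}f^{\neq}\|_{L^2(\Omega)}\le\|\nabla f\|_{L^2(\Omega)}$ for the non-zero mode. The algebraic input is the pointwise identity ${\bf r}=\rho{\bf w}+z\tilde{\bf u}$, whose first component reads $r_1=\rho w_1+z\tilde u_1=\tilde\rho w_1+zw_1+z\tilde u_1$, together with the shock bounds $\|\tilde u_1\|_{L^{\infty}}+\|\partial_1\tilde\rho,\partial_1\tilde u_1\|_{L^{\infty}}\lesssim\delta$ from Lemma \ref{nl} and \eqref{cs}, where $\tilde\rho$ is bounded above and below. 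I would prove the three estimates in the order \eqref{E:5.1p}, \eqref{E:5.2}, \eqref{E:5.1}, since \eqref{E:5.1} relies on \eqref{E:5.2}.

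For \eqref{E:5.1p}, write $z=\bar z+z^{\neq}$. The two modes are $L^2(\Omega)$-orthogonal because $\int_{\mathbb T^2}z^{\neq}\,dx^{\prime}=0$, so $\|z\|_{L^2(\Omega)}^2=\|\bar z\|_{L^2(\Omega)}^2+\|z^{\neq}\|_{L^2(\Omega)}^2$. Since $\bar z$ is independent of $x^{\prime}$ and $|\mathbb T^2|=1$, one has $\|\bar z\|_{L^2(\Omega)}=\|\bar z\|_{L^2(\mathbb R)}=\|\partial_1 Z\|_{L^2(\mathbb R)}$ by \eqref{i1}, while the Poincar\'e inequality gives $\|z^{\neq}\|_{L^2(\Omega)}\lesssim\|\nabla z\|_{L^2(\Omega)}$. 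The same argument applied to $r_1$ (using $\partial_1R=\bar r_1$ from \eqref{i2}) and adding the two yields \eqref{E:5.1p}.

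The heart of the matter is \eqref{E:5.2}. Averaging $r_1=\tilde\rho w_1+zw_1+z\tilde u_1$ over $\mathbb T^2$ and solving for $\bar w_1$ gives $\bar w_1=\tilde\rho^{-1}(\bar r_1-\overline{zw_1}-\tilde u_1\bar z)$, with $\bar r_1=\partial_1R$ and $\bar z=\partial_1 Z$. The delicate term is the nonlinear average $\overline{zw_1}$, which I would split as $\overline{zw_1}=\bar z\,\bar w_1+\overline{z^{\neq}w_1^{\neq}}$, the mixed terms vanishing upon averaging. The first piece is controlled by $\nu\|\bar w_1\|_{L^2(\mathbb R)}$ using $\|\bar z\|_{L^{\infty}}\lesssim\nu$ from \eqref{E:3.1} and is absorbed on the left for small $\nu$; the second piece is the key, and here I would estimate $\|\overline{z^{\neq}w_1^{\neq}}\|_{L^2(\mathbb R)}\lesssim\|z^{\neq}w_1^{\neq}\|_{L^2(\Omega)}\le\|w_1^{\neq}\|_{L^{\infty}}\|z^{\neq}\|_{L^2(\Omega)}\lesssim\nu\|\nabla z\|_{L^2(\Omega)}$, where the non-zero-mode Poincar\'e inequality on $z^{\neq}$ produces exactly the $\|\nabla z\|_{L^2(\Omega)}$ term appearing in \eqref{E:5.2}. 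Combining this with the bound $\|w_1^{\neq}\|_{L^2(\Omega)}\lesssim\|\nabla w_1\|_{L^2(\Omega)}$ for the non-zero mode and with $|\tilde u_1|\lesssim\delta$ gives \eqref{E:5.2}.

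Finally, for \eqref{E:5.1} I differentiate $r_1=\tilde\rho w_1+zw_1+z\tilde u_1$ and estimate the resulting terms in $L^2(\Omega)$. The main term $\tilde\rho\nabla w_1$ is bounded by $\|\nabla w_1\|_{L^2(\Omega)}$; the terms $(\nabla z)w_1$, $z\nabla w_1$, $(\nabla z)\tilde u_1$ carry a factor $\|w_1\|_{L^{\infty}}\lesssim\nu$, $\|z\|_{L^{\infty}}\lesssim\nu$, or $\|\tilde u_1\|_{L^{\infty}}\lesssim\delta$ and hence contribute $(\delta+\nu)\|\nabla z\|_{L^2(\Omega)}+\|\nabla w_1\|_{L^2(\Omega)}$; the shock-derivative terms $(\partial_1\tilde\rho)w_1$ and $z\,\partial_1\tilde u_1$ contribute $\delta\|w_1\|_{L^2(\Omega)}+\delta\|z\|_{L^2(\Omega)}$, and I would rewrite $\delta\|w_1\|_{L^2(\Omega)}$ via the already-proved \eqref{E:5.2} and $\delta\|z\|_{L^2(\Omega)}$ via the decomposition $\|z\|_{L^2(\Omega)}\lesssim\|\partial_1 Z\|_{L^2(\mathbb R)}+\|\nabla z\|_{L^2(\Omega)}$ used in \eqref{E:5.1p}. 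Collecting everything yields \eqref{E:5.1}. The main obstacle throughout is the nonlinear average $\overline{zw_1}$ in \eqref{E:5.2}: one must split it by modes and invoke the Poincar\'e inequality for $z^{\neq}$ to recover a gradient, and one must order the proofs so that \eqref{E:5.2} is available before \eqref{E:5.1} to avoid circularity.
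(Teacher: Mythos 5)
Your proposal is correct, but it reorganizes the argument in a way that genuinely differs from the paper's proof of the last two estimates. For \eqref{E:5.1p} you and the paper do the same thing: the paper's one-line inequality \eqref{ebu} is exactly your orthogonal splitting $f=\bar f+f^{\neq}$ plus the Poincar\'e bound of Lemma \ref{l1}. The divergence is in the order and mechanism for \eqref{E:5.1} and \eqref{E:5.2}. The paper proves \eqref{E:5.1} \emph{first}: it expands $\nabla r_1$, estimates the zero-order terms by $(\delta+\nu)\|z,w_1\|_{L^2(\Omega)}\lesssim(\delta+\nu)\|z,r_1\|_{L^2(\Omega)}$, applies \eqref{E:5.1p} to convert this into $(\delta+\nu)\bigl(\|\partial_1 Z,\partial_1 R\|_{L^2(\mathbb R)}+\|\nabla z,\nabla r_1\|_{L^2(\Omega)}\bigr)$, and absorbs the resulting $(\delta+\nu)\|\nabla r_1\|_{L^2(\Omega)}$ into the left-hand side; \eqref{E:5.2} then follows by writing $\|w_1\|\lesssim\|r_1\|+\delta\|z\|$, invoking \eqref{E:5.1p} again, and substituting the already-proved \eqref{E:5.1}. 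You instead prove \eqref{E:5.2} first and independently, by averaging the momentum identity over $\mathbb T^2$, solving for $\bar w_1$, and mode-splitting the nonlinear average $\overline{zw_1}=\bar z\,\bar w_1+\overline{z^{\neq}w_1^{\neq}}$ so that the Poincar\'e inequality on $z^{\neq}$ produces the $\|\nabla z\|_{L^2(\Omega)}$ term; you then feed \eqref{E:5.2} into the proof of \eqref{E:5.1}. Both dependency orders are consistent (neither is circular), and the smallness hypotheses used are the same. The paper's route is slightly leaner — it never decomposes a product into modes, relying only on \eqref{ebu} and a self-absorption — while your route makes \eqref{E:5.2} self-contained and avoids the absorption step at the cost of the extra bookkeeping on $\overline{zw_1}$.
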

\begin{proof}
It follows from Lemma \ref{l1} that
\begin{align}\label{ebu}
\|f\|_{L^2(\Omega)}^2\leq\|\bar f\|_{L^2(\mathbb{R})}^2+\|f^{\neq}\|_{L^2(\Omega)}^2 \lesssim \|\bar f\|_{L^2(\mathbb{R})}^2+\|\nabla f\|_{L^2(\Omega)}^2.
\end{align}
Thus, \eqref{E:5.1p} follows. 
Since $r_1=m_1-\tilde{m}_1=\rho u_1-\tilde{\rho} \tilde{u}_1=\rho w_1+\tilde{u}_1 z$, we have $$\nabla r_1=\nabla(\tilde{\rho}+z) w_1+\rho \nabla w_1+\nabla \tilde{u}_1 z+\tilde{u}_1 \nabla z.$$ Then using \eqref{ebu} we get
	\begin{align*}
		\left\|\nabla r_1\right\|_{L^2(\Omega)} & \lesssim\left\|\nabla w_1\right\|_{L^2(\Omega)}+(\delta+\nu)\left\|z, w_1\right\|_{L^2(\Omega)}+\delta\|\nabla z\|_{L^2(\Omega)} \notag\\
		& \lesssim\left\|\nabla w_1\right\|_{L^2(\Omega)}+\delta\|\nabla z\|_{L^2(\Omega)}+(\delta+v)\|z, r_1\|_{L^2(\Omega)} \notag\\
		& \lesssim\left\|\nabla w_1\right\|_{L^2(\Omega)}+\delta\|\nabla z\|_{L^2(\Omega)}+(\delta+v)\left(\| \partial_1 Z, \partial_1R\right\|_{L^2(\mathbb{R})}+\left\| \nabla z, \nabla r_1 \|_{L^2(\Omega)}\right),
	\end{align*}
	which leads to \eqref{E:5.1}. Furthermore, it holds that
	\begin{align*}
		\left\|w_1\right\|_{L^2(\Omega)} & \lesssim\left\|r_1\right\|_{L^2(\Omega)}+\delta\|z\|_{L^2(\Omega)} \lesssim\left\|\partial_1 R\right\|_{L^2(\mathbb{R})}+\left\|\nabla r_1\right\|_{L^2(\Omega)}+\delta\left(\left\|\partial_1 Z\right\|_{L^2(\mathbb{R})}+\|\nabla z\|_{\left.L^2(\Omega)\right)}\right) \notag\\
		& \lesssim\left\|\partial_1 R, \partial_1 Z\right\|_{L^2(\mathbb{R})}+\left\|\nabla w_1\right\|_{L^2(\Omega)}+(\delta+\nu)\|\nabla z\|_{L^2(\Omega)}.
	\end{align*}
Thus the proof is completed.
\end{proof}
\subsection{the estimates of electric field}\label{section 3}
\begin{lemma}\label{L:5.2}
For any $T>0$, assume that $(z, {\bf w}, H)(x,t) \in X_{T}$ is the solution to the problem \eqref{E:5.3}-\eqref{E:1.3c}. If $\delta$ and $\nu$ are small, then it holds that
	\begin{align}\label{E:5.22}
		&\|H\|_{H^2(\Omega)}^2+\|H_t \|_{H^1(\Omega)}^2+\int_0^t\|H, H_t\|_{H^2(\Omega)}^2d\tau
	\lesssim \mathbb{N}^2(0)+\|\bar H\|_{L^2(\mathbb{R})}^2+\delta\|z\|_{L^2(\Omega)}^2
	\notag\\
	&+\int_0^t\|\bar H, \bar H_t\|_{L^2(\mathbb{R})}^2d\tau
	 +\left(\delta+\nu\right)\int_0^t\left(\|z\|_{H^1(\Omega)}^2+\|w_1, \nabla {\bf w}\|_{L^2(\Omega)}^2\right)d\tau.
\end{align} 
\end{lemma}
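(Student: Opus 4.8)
The plan is to treat the third equation of \eqref{E:5.3}, the nonlinear Poisson equation $-\Delta H+e^{\tilde E}H=z-N_2$, as the basic elliptic relation for the potential, and to import all time-integrated control from the fluid part through the two structural identities coming from the first equation of \eqref{E:5.3}: the continuity identity $\dive(\rho{\bf w})=-\partial_t z-\dive(z\tilde{\bf u})$ and the Poisson relation $z=-\Delta H+N_1$. Combining them yields the bridge $\Delta H_t=\dive(\rho{\bf w}+z\tilde{\bf u})+\partial_t N_1$, which expresses the dynamics of $\Delta H$ through the momentum flux $\rho{\bf w}$. Since the Poisson operator carries no dissipation of its own, it is exactly this bridge that lets one trade the viscous dissipation of ${\bf w}$ for the time-integrated bounds on $H$ and $H_t$ on the left of \eqref{E:5.22}.

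First I would derive the $L^2$--$H^1$ level bounds by testing the Poisson equation against $H$ and its time derivative $-\Delta H_t+e^{\tilde E}H_t=z_t-(e^{\tilde E})_tH-\partial_t N_2$ against a suitably weighted $H_t$. This produces the coercive pairs $\|\nabla H\|_{L^2}^2+\int e^{\tilde E}H^2$ and $\|\nabla H_t\|_{L^2}^2+\int e^{\tilde E}H_t^2$ together with the coupling $\int z_t H_t$. The genuinely new (non-circular) information is supplied by substituting $z_t=-\dive(\rho{\bf w}+z\tilde{\bf u})$ from continuity, rather than $z_t=-\Delta H_t+\partial_t N_1$ from Poisson, which would only reproduce the left-hand side. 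In the resulting term $\int\dive(\rho{\bf w}+z\tilde{\bf u})H_t$, instead of a direct Cauchy--Schwarz that yields the full $\|{\bf w}\|_{L^2}$ at order one, I would integrate by parts so that the transverse velocities $w_2,w_3$ are differentiated (producing only $\nabla{\bf w}$) while the longitudinal coupling is paired with the background $\tilde{\bf u}$ and hence carries the small factor $\delta$ from \eqref{cs}; this is how only $w_1$ and $\nabla{\bf w}$ with the small weight $(\delta+\nu)$ survive on the right of \eqref{E:5.22}. The nonlinear remainders $N_1,N_2$ are quadratically small and absorbed via \eqref{E:3.1}.

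Next I would upgrade to second order. Here the identity $z=-\Delta H+N_1$ trades $\|\nabla^2 H\|_{L^2}$ for $\|z\|_{L^2}$ modulo the quadratic remainder, and commuting $\nabla$ through the Poisson equation together with standard elliptic regularity on $\Omega=\mathbb{R}\times\mathbb{T}^2$ yields $\|H\|_{H^2}$, $\int_0^t\|H\|_{H^2}^2$ and $\int_0^t\|H_t\|_{H^2}^2$. To make the order-one density contributions acceptable, I would split every field into its zero and non-zero modes as in \eqref{E:1.18}. By $L^2$-orthogonality of the modes and the zero-mode Poisson relation $\bar z=-\partial_1^2\bar H+e^{\tilde E}\bar H+\overline{N_2}$, all order-one contributions of $z$ and $z_t$ collapse onto the one-dimensional quantities $\bar H,\bar H_t$, producing the $\|\bar H\|_{L^2}^2$ and $\int_0^t\|\bar H,\bar H_t\|_{L^2}^2$ terms; the non-zero part is handled by the Poincaré-type inequality $\|f^{\neq}\|_{L^p}\lesssim\|\nabla_{x'}f^{\neq}\|_{L^p}$ of Lemma \ref{l1}, which turns $\|z^{\neq}\|$ into $\|\nabla z\|$ and hence into the small $(\delta+\nu)\|z\|_{H^1}^2$ term.

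The main obstacle is precisely the coupling term $\int\dive(\rho{\bf w})H_t$ (equivalently $\int\rho{\bf w}\cdot\nabla H_t$): because its leading part $\tilde\rho\dive{\bf w}$ equals $-z_t$ up to lower-order terms, a careless treatment either reproduces $\|\nabla H_t\|_{L^2}^2$ circularly, reflecting the absence of intrinsic dissipation for the elliptic potential, or else leaves the full velocity at order one. The delicate point, and the reason a carefully chosen weighted multiplier is needed, is to arrange the integration by parts and the zero/non-zero mode splitting so that every velocity contribution appears either as $\nabla{\bf w}$ or with the small factor $(\delta+\nu)$, and every order-one appearance of $z$ is converted into the already-controlled zero-mode potential $\bar H$. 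Only then does \eqref{E:5.22} close once it is combined, as in the strategy sketched after \eqref{E:5.30nbm}, with the fluid and zero-mode estimates.
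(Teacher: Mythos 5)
Your overall plan is structurally different from the paper's, and it contains a gap that I do not see how to repair. The paper does not test the Poisson equation $\eqref{E:5.3}_3$ against $H$ and $H_t$; it tests the \emph{momentum} equation $\eqref{E:5.3}_2$ against the weighted multipliers $\rho\nabla H$ and $\rho\nabla H_t$. The point of that choice is a sign structure your scheme loses: in $\eqref{E:5.3}_2$ the terms $\frac{T}{\rho}\nabla z$ and $\nabla H$ appear together, so after pairing with $\rho\nabla H$ the coupling $T\int\nabla z\cdot\nabla H\,dx$ is rewritten through $\eqref{E:5.3}_3$ as $T\int[(\Delta H)^2+e^{E}(\nabla H)^2]\,dx$ plus quadratic remainders (see \eqref{O:5.8}); i.e.\ the density--potential coupling is \emph{coercive} and joins the dissipation rather than being a term to absorb. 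Likewise $-\Delta{\bf w}\cdot\nabla H$ produces, via \eqref{E:5.11new}, the time derivative of $(\Delta H)^2+(\nabla H)^2$. Every remaining fluid contribution then carries a factor $\delta$ (from the profile bounds of Lemma \ref{nl} and \eqref{cs}) or $\nu$ (from \eqref{E:3.1}), which is exactly how the crucial prefactor $(\delta+\nu)$ in front of $\int_0^t(\|z\|_{H^1}^2+\|w_1,\nabla{\bf w}\|_{L^2}^2)\,d\tau$ arises in \eqref{E:5.22}.

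Your route, by contrast, starts from the elliptic identity obtained by testing $-\Delta H+e^{\tilde E}H=z-N_2$ against $H$ (and its $t$-derivative against $H_t$). The only non-tautological way to use that identity is to bound the coupling $\int zH\,dx$ (resp.\ $\int z_tH_t\,dx$) from the fluid side, and this yields $\|H\|_{H^1}\lesssim\|z\|_{L^2}$ with an \emph{order-one} constant. Your proposed fix --- splitting into modes and invoking $\|z^{\neq}\|_{L^2}\lesssim\|\nabla_{x'}z^{\neq}\|_{L^2}$ from Lemma \ref{l1} --- does not help, because the Poincar\'e inequality carries no small constant: it converts $\|z^{\neq}\|$ into $\|\nabla z\|$ at order one, not into $(\delta+\nu)\|z\|_{H^1}$. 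The resulting estimate would read $\int_0^t\|H\|_{H^2}^2\,d\tau\lesssim\cdots+C\int_0^t\|\nabla z\|_{L^2}^2\,d\tau$ with $C=O(1)$, and this cannot be closed against Lemma \ref{L:5.4}, whose right-hand side \eqref{E:5.40} contains $\int_0^t\|\nabla H\|_{L^2}^2\,d\tau$ at order one: the mutual absorption in the proof of Lemma \ref{L:5.6} requires the product of the two coupling constants to be small, which is precisely what the $(\delta+\nu)$ prefactor in \eqref{E:5.22} guarantees and what your version would not. Your bridge identity $\Delta H_t=\dive(\rho{\bf w}+z\tilde{\bf u})+\partial_tN_1$ is indeed used in the paper, but only as an auxiliary substitution (namely \eqref{E:5.11new}) \emph{inside} the momentum-equation energy estimate, not as the primary source of the bound.
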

\begin{proof}
	Multiplying $\eqref{E:5.3}_2$ by $\cdot \rho\nabla H$ gives that
	\begin{align}\label{E:5.4}
		&\rho\partial_t{\bf w}\cdot \nabla H+T\nabla z\cdot \nabla H-\Delta {\bf w}\cdot \nabla H+\rho\left(\nabla H\right)^2
		=-({\bf h_1}+{\bf h_2})\cdot \rho\nabla H.
	\end{align}
It follows from $\eqref{E:5.3}_1$ and $\eqref{E:5.3}_3$ that 
\begin{align}\label{E:5.5b}
&\rho\partial_t{\bf w}\cdot \nabla H
\notag\\
=&\left(\dive (\rho{\bf w} H)\right)_t-\left(H\dive (\rho {\bf w})\right)_t-\dive (\rho{\bf w}H_t)+H_t\dive (\rho {\bf w})-\partial_t\rho{\bf w}\cdot \nabla H
\notag\\
=&\left(\dive (\rho{\bf w} H)\right)_t-\dive (\rho{\bf w}H_t)+\left[H(z_t+\dive (z \tilde {\bf u}))\right]_t-H_t(z_t+\dive (z \tilde {\bf u}))-\partial_t\rho{\bf w}\cdot \nabla H
\notag\\
=&\left(\dive (\rho{\bf w} H)\right)_t-\dive (\rho{\bf w}H_t)-\left[H(\Delta H_t-N_{1t}-\dive (z \tilde {\bf u}))\right]_t+H_t(\Delta H_t-N_{1t}-\dive (z \tilde {\bf u}))-\partial_t\rho{\bf w}\cdot \nabla H
\notag\\
=&Q_{1t}+\dive(\cdots)-I_{6}-I_{7},
\end{align}
where $(\cdots)=-H_t(\rho{\bf w}+z\tilde {\bf u})+H_t\nabla H_t-\tilde \rho_t{\bf w}H$, $Q_1=Q_{1,1}+Q_{1,2}$ and
\begin{align}
&Q_{1,1}=\dive\left(H(\rho{\bf w}+z\tilde {\bf u})-H\nabla H_t\right),\notag\\
&Q_{1,2}=\nabla H\cdot \nabla H_t+e^EHH_t+(e^{\tilde E})_tH(e^H-1)-\nabla H z\cdot \tilde {\bf u},\notag\\
&I_{6}=\left(\nabla H_t\right)^2+e^{E}(H_t)^2+\left(e^{\tilde E}\right)_t(e^H-1)H_t-\nabla H_t\cdot z\tilde {\bf u}, \notag\\
&I_{7}=z_t\nabla H\cdot {\bf w}-\partial_1\tilde \rho_tw_1H-\tilde \rho_tH\dive {\bf w}.\nonumber
\end{align}
The computations shows that 
\begin{align*}
	\|I_{6}\|_{L^1(\Omega)}\lesssim \|H_t, \nabla H_t \|_{L^2(\Omega)}^2+(\delta+\nu) \|H, z \|_{L^2(\Omega)}^2
\end{align*}
and
\begin{align*}
\|I_{7}\|_{L^1(\Omega)}
=&\|-\left(\nabla z\cdot {\bf w}+\partial_1 \tilde \rho w_1+\rho \dive {\bf w}+\dive(z \tilde {\bf u})\right)\nabla H\cdot {\bf w}-\partial_1\tilde \rho_tw_1H-\tilde \rho_tH\dive {\bf w}\|_{L^1(\Omega)}
\notag\\
\lesssim &(\delta+\|{\bf w}\|_{L^{\infty}(\Omega)})\|w_1, z, \nabla z, \nabla H, \dive {\bf w}\|_{L^2(\Omega)}^2
\lesssim (\delta+\nu)\|w_1, z, \nabla z, H, \nabla H, \nabla {\bf w}\|_{L^2(\Omega)}^2,
\end{align*}	
which yields that 
\begin{align}\label{E:5.5}
	\int_{\Omega}\rho\partial_t{\bf w}\cdot \nabla Hdx
\gtrsim &\frac{d}{dt}\int_{\Omega}Q_{1,2}dx
	-\|H_t\|_{H^1(\Omega)}^2-(\delta+\nu)\|w_1, z, \nabla z, H, \nabla H, \nabla {\bf w}\|_{L^2(\Omega)}^2.
\end{align}	
From $\eqref{E:5.3}_3$ we have 
\begin{align}\label{O:5.8}
\int_{\Omega}\nabla z\cdot \nabla Hdx
=&\int_{\Omega}\left[(\Delta H)^2+e^{E}(\nabla H)^2+\nabla \left(e^{\tilde E}\right)\cdot (e^H-1)\nabla H\right]dx
\notag\\
\gtrsim&\|\nabla H, \Delta H\|_{L^2(\Omega)}^2-\delta\|H\|_{L^2(\Omega)}^2.
\end{align}
It follows from $\eqref{E:5.3}_1$ and $\eqref{E:5.3}_3$ that 
\begin{align}\label{E:5.11new}
\dive {\bf w}=-\frac{1}{\rho}\left[z_t+\nabla \rho\cdot {\bf w}+\dive (z\tilde {\bf u})\right]=-\frac{1}{\rho}\left[-\Delta H_t+N_{1t}+\nabla \rho\cdot {\bf w}+\dive (z\tilde {\bf u})\right],
\end{align}
which yields that
\begin{align}\label{gw1}
-\Delta {\bf w}\cdot \nabla H=&\left[-\nabla \left(\dive{\bf w}\right)+\nabla\times \mbox{curl} {\bf w}\right] \cdot \nabla H\notag\\
	=&-\dive \left(\dive {\bf w}\nabla H\right)+\dive  {\bf w}\Delta H+\nabla \times \mbox{curl}{\bf w}\cdot \nabla H
	\notag\\
	=&\dive \left(\mbox{curl}{\bf w}\times\nabla H-\dive {\bf w}\nabla H\right)+\dive  {\bf w}\Delta H
	\notag\\
	=&\dive(\cdots)+\left(\frac{(\Delta H)^2}{2\rho}+\frac{e^E}{2\rho}(\nabla H)^2\right)_t+I_{8},
\end{align}
where $(\cdots)=\mbox{curl}{\bf w}\times\nabla H-\dive {\bf w}\nabla H-\frac{e^E}{\rho}H_t\nabla H$ and 
\begin{align*}
I_{8}=&-\left(\frac{1}{2\rho}\right)_t(\Delta H)^2-\left(\frac{e^E}{2\rho}\right)_t(\nabla H)^2+\nabla \left(\frac{e^E}{\rho}\right)H_t\cdot \nabla H-\frac{1}{\rho}\nabla z\cdot{\bf w}\Delta H-\frac{1}{\rho}\partial_1\tilde \rho w_1\Delta H
\notag\\
&-\frac{1}{\rho}\dive (z\tilde {\bf u})\Delta H-\frac{1}{\rho}(e^{\tilde E})_t(e^H-1)\Delta H.
\end{align*}
By the direct calculations we can verify that
\begin{align}
\|I_{8}\|_{L^1(\Omega)}\lesssim& \left(\delta+\|\rho_t, E_t, \nabla \rho, \nabla E, {\bf w}\|_{L^{\infty}(\Omega)}\right)\left(\|w_1, H_t\|_{L^2(\Omega)}^2+\|z\|_{H^1(\Omega)}^2+\|H\|_{H^2(\Omega)}^2\right)
\notag\\
\lesssim & \left(\delta+\|z, {\bf w}\|_{W^{1,\infty}(\Omega)}+\|H_t, \nabla H\|_{L^{\infty}(\Omega)}\right)\left(\|w_1, H_t\|_{L^2(\Omega)}^2+\|z\|_{H^1(\Omega)}^2+\|H\|_{H^2(\Omega)}^2\right)
\notag\\
\lesssim & \left(\delta+\nu\right)\left(\|w_1, H_t\|_{L^2(\Omega)}^2+\|z\|_{H^1(\Omega)}^2+\|H\|_{H^2(\Omega)}^2\right).\nonumber
\end{align}
Thus integrating \eqref{gw1} over $\Omega$ we get 
\begin{align}\label{E:5.7}
		-\int_{\Omega}\Delta {\bf w}\cdot \nabla Hdx\gtrsim&\frac{d}{dt}\int_{\Omega}\left(\frac{1}{2\rho}(\Delta H)^2+\frac{e^{E}}{2\rho}(\nabla H)^2\right)dx
	-\left(\delta+\nu\right)\left(\|w_1, H_t\|_{L^2(\Omega)}^2+\|z\|_{H^1(\Omega)}^2+\|H\|_{H^2(\Omega)}^2\right).
\end{align}
In addition, we have
\begin{align}\label{E:5.15}
&-\int_{\Omega}({\bf h_1}+{\bf h_2})\cdot\rho\nabla Hdx
\notag\\
=&-\int_{\Omega}\left[\tilde {\bf u}\cdot \nabla {\bf w}+{\bf w}\cdot \nabla {\bf w}+\left(\frac{1}{\rho}-\frac{1}{\tilde\rho}\right)\left(T\nabla \tilde \rho-\Delta \tilde {\bf u}\right)+w_1\partial_1\tilde u_1 \right]\cdot \rho\nabla Hdx
\notag\\
\lesssim& \left(\delta+\| {\bf w}\|_{L^{\infty}(\Omega)}\right)\|z, w_1, \nabla {\bf w}, \nabla H\|_{L^2(\Omega)}^2\lesssim \left(\delta+\nu\right)\|z, w_1, \nabla {\bf w}, \nabla H\|_{L^2(\Omega)}^2.
\end{align}
Then  integrating \eqref{E:5.4} over $\Omega\times [0,t]$ and  substituting \eqref{E:5.5}-\eqref{O:5.8} and \eqref{E:5.7}-\eqref{E:5.15} into the results gives that
\begin{align}\label{E:5.11}
&\|\nabla H, \Delta H \|_{L^2(\Omega)}^2
+\int_0^t\|\nabla H, \Delta H\|_{L^2(\Omega)}^2d\tau
\lesssim \mathbb{N}^2(0)+\|H\|_{L^2(\Omega)}^2+\|H_t \|_{H^1(\Omega)}^2+\delta\|z\|_{L^2(\Omega)}^2
\notag\\
&+\int_0^t\|H_t\|_{H^1(\Omega)}^2d\tau+\left(\delta+\nu\right)\int_0^t\left(\|z\|_{H^1(\Omega)}^2+\|w_1, H, \nabla{\bf w}\|_{L^2(\Omega)}^2\right)d\tau.
\end{align}
Recall that it follows from \eqref{ebu} that
\begin{align}
	\|H\|_{L^2(\Omega)}^2\lesssim \|\bar H\|_{L^2(\mathbb{R})}^2+\|\nabla H\|_{L^2(\Omega)}^2,\nonumber
\end{align}
then we use the fact that $\|\Delta H\|_{L^2(\Omega)}\geq c_0\|\nabla^2 H\|_{L^2(\Omega)}$ for some constant $c_0>0$ to transfer \eqref{E:5.11} into
\begin{align}\label{E:5.11b}
&\|H, \nabla^2 H\|_{L^2(\Omega)}^2+\int_0^t\|H\|_{H^2(\Omega)}^2d\tau\lesssim \mathbb{N}^2(0)+\|\bar H\|_{L^2(\mathbb{R})}^2+\|H_t, \nabla H, \nabla H_t\|_{L^2(\Omega)}^2+\delta\|z\|_{L^2(\Omega)}^2
\notag\\
&+\int_0^t\left(\|\bar H\|_{L^2(\mathbb{R})}^2+\|H_t\|_{H^1(\Omega)}^2\right)d\tau +\left(\delta+\nu\right)\int_0^t\left(\|z\|_{H^1(\Omega)}^2+\|w_1, \nabla {\bf w}\|_{L^2(\Omega)}^2\right)d\tau.
\end{align}

Multiplying $\eqref{E:5.3}_2$ by $\cdot \rho\nabla H_t$ gives that
\begin{align}\label{E:5.12}
	&\partial_t\left(\frac{\rho}{2}(\nabla H)^2\right)+\rho\partial_t{\bf w}\cdot \nabla H_t+T\nabla z\cdot \nabla H_t-\Delta {\bf w}\cdot \nabla H_t
	=\left(\frac{\rho}{2}\right)_t(\nabla H)^2-({\bf h_1}+{\bf h_2})\cdot \rho\nabla H_t.
\end{align}
By the similar argument in \eqref{E:5.5} we get
\begin{align}\label{E:5.13}
\rho\partial_t{\bf w}\cdot \nabla H_t=&\dive (H_t(\rho{\bf w})_t)-\partial_t\dive (\rho {\bf w})H_t-\rho_t{\bf w}\cdot \nabla H_t
\notag\\
=&\dive (H_t(\rho{\bf w})_t)+\partial_t(z_t+\dive (z\tilde {\bf u}))H_t-\rho_t{\bf w}\cdot \nabla H_t
\notag\\
=&(\dive(H_t(\rho{\bf w}+z\tilde {\bf u})))_t-\dive (H_{tt}(\rho{\bf w}+z\tilde {\bf u}))+(-\Delta H+N_1)_{tt}H_t-(z\tilde {\bf u})_t\cdot \nabla H_t-\rho_t{\bf w}\cdot \nabla H_t
\notag\\
=&Q_{2t}+\dive(\cdots)-I_{9}-I_{10}-z_t\nabla H_t\cdot {\bf u},
\end{align}	
where $(\cdots)=H_{tt}\nabla H_t-H_{tt}(\rho{\bf w}+z\tilde {\bf u})-\tilde \rho_t{\bf w}H_t$, $Q_2=Q_{2,1}+Q_{2,2}$ and
\begin{align}
&Q_{2,1}=\dive(H_t(\rho{\bf w}+z\tilde {\bf u})-H_t\nabla H_t),\notag\\
&Q_{2,2}=\frac12(\nabla H_t)^2+\frac{e^E}{2}(H_t)^2,\notag\\
&I_{9}=\left(\frac{e^E}{2}\right)_t\left(H_t\right)^2-\left[(e^{\tilde E})_{tt}(e^H-1)+2(e^{\tilde E})_te^HH_t+e^EH_t^2\right]H_t, \notag\\
&I_{10}=z\tilde {\bf u}_t\cdot \nabla H_t-\dive(z\tilde {\bf u}){\bf w}\cdot \nabla H_t-(\partial_1\tilde \rho_tw_1+\tilde \rho_t\dive {\bf w})H_t.\nonumber
\end{align}
By the direct calculation we have
\begin{align}
\|I_{9}\|_{L^1(\Omega)}\lesssim& (\delta+\|H_t\|_{L^{\infty}(\Omega)})\|H, H_t, \nabla H_t\|_{L^2(\Omega)}^2\lesssim (\delta+\nu)\|H, H_t, \nabla H_t\|_{L^2(\Omega)}^2,\label{E:5.20new}
\\
\|I_{10}\|_{L^1(\Omega)}\lesssim& (\delta+\|{\bf w}\|_{L^{\infty}(\Omega)})\left(\|z, H_t\|_{H^1(\Omega)}^2+\|w_1, \nabla {\bf w}\|_{L^2(\Omega)}^2\right)
\notag\\
\lesssim& (\delta+\nu)\left(\|z, H_t\|_{H^1(\Omega)}^2+\|w_1, \nabla {\bf w}\|_{L^2(\Omega)}^2\right)\label{E:5.21new}
\end{align}
and 
\begin{align}\label{E:5.22new}
\|z_t\nabla H_t\cdot {\bf u}\|_{L^1(\Omega)}=&\|(\rho\dive{\bf w}+\nabla z\cdot {\bf w}+\partial_1\tilde \rho w_1+\dive(z\tilde {\bf u}))\nabla H_t\cdot {\bf u}\|_{L^1(\Omega)}
\notag\\
\lesssim& (\delta+\|{\bf w}\|_{L^{\infty}(\Omega)})\|w_1, \nabla {\bf w}, z, \nabla z, \nabla H_t\|_{L^2(\Omega)}^2
 \notag\\
\lesssim& (\delta+\nu)\|w_1, \nabla {\bf w}, z, \nabla z, \nabla H_t\|_{L^2(\Omega)}^2.
\end{align}
Thus we substitute \eqref{E:5.20new}-\eqref{E:5.22new} into \eqref{E:5.13} to get
\begin{align}\label{E:5.23new}
\int_{\Omega}\rho\partial_t{\bf w}\cdot \nabla H_tdx\gtrsim &\frac{d}{dt}\int_{\Omega}Q_{2,2}dx-(\delta+\nu)\left(\|z, H_t\|_{H^1(\Omega)}^2+\|w_1, \nabla {\bf w}, H\|_{L^2(\Omega)}^2\right).
\end{align}

Similar to \eqref{gw1}, we use \eqref{E:5.11new} to get
\begin{align}
-\Delta {\bf w}\cdot \nabla H_t=&\left[-\nabla \left(\dive{\bf w}\right)+\nabla\times \mbox{curl}{\bf w}\right] \cdot \nabla H_t\notag\\
	=&-\dive \left(\dive {\bf w}\nabla H_t\right)+\dive  {\bf w}\Delta H_t+\nabla \times \mbox{curl}{\bf w}\cdot \nabla H_t
	\notag\\
	=&\dive \left(\mbox{curl}{\bf w}\times\nabla H_t-\dive {\bf w}\nabla H_t\right)-\frac{1}{\rho}\left[-\Delta H_t+N_{1t}+\nabla \rho\cdot {\bf w}+\dive (z\tilde {\bf u})\right]\Delta H_t
	\notag\\
	=&\dive(\cdots)+\frac{1}{\rho}(\Delta H_t)^2+\frac{e^E}{\rho}(\nabla H_t)^2-I_{11},\nonumber
\end{align}
where $(\cdots)=\mbox{curl}{\bf w}\times\nabla H_t-\dive {\bf w}\nabla H_t-\frac{e^E}{\rho}H_t\nabla H_t$ and 
\begin{align}
I_{11}=\frac{\Delta H_t}{\rho}\left[(e^{\tilde E})_t(e^H-1)+\nabla z\cdot {\bf w}+\partial_1\tilde \rho w_1+\dive (z\tilde {\bf u})\right]-\nabla \left(\frac{e^E}{\rho}\right)H_t\cdot \nabla H_t. \nonumber
\end{align}
It is easy to check that
\begin{align}
\|I_{11}\|_{L^1(\Omega)}\lesssim& \left(\delta+\|\nabla z, \nabla H, {\bf w}\|_{L^{\infty}(\Omega)}\right)\left(\|w_1, H\|_{L^2(\Omega)}^2+\|z\|_{H^1(\Omega)}^2+\|H_t\|_{H^2(\Omega)}^2\right)
\notag\\
\lesssim& \left(\delta+\nu\right)\left(\|w_1, H\|_{L^2(\Omega)}^2+\|z\|_{H^1(\Omega)}^2+\|H_t\|_{H^2(\Omega)}^2\right), \nonumber
\end{align}
together with $\|\Delta H_t\|_{L^2(\Omega)}\geq c_0\|\nabla^2 H_t\|_{L^2(\Omega)}$, which leads to 
\begin{align}\label{E:5.14}
-\int_{\Omega}\Delta {\bf w}\cdot \nabla H_tdx	\gtrsim&\|\nabla H_t\|_{H^1(\Omega)}^2
-\left(\delta+\nu\right)\left(\|w_1, H\|_{L^2(\Omega)}^2+\|z\|_{H^1(\Omega)}^2+\|H_t\|_{H^2(\Omega)}^2\right).
\end{align}
In addition, it follows from  $\eqref{E:5.3}_3$ that
\begin{align}
\nabla z\cdot \nabla H_t=-\dive (\Delta H \nabla H_t)+\frac{1}{2}\left(e^E(\nabla H)^2+(\Delta H)^2\right)_t-\left(\frac{e^E}{2}\right)_t(\nabla H)^2+\nabla (e^{\tilde E})(e^H-1)\cdot \nabla H_t,\nonumber
\end{align}
which leads to 
\begin{align}
	\int_{\Omega}T\nabla z\cdot \nabla H_tdx
	\gtrsim& \frac{T}{2}\frac{d}{dt}\int_{\Omega}\left[e^E(\nabla H)^2+(\Delta H)^2\right]dx-(\delta+\|H_t\|_{L^{\infty}(\Omega)})\|H, \nabla H, \nabla H_t\|_{L^2(\Omega)}^2
		\notag\\
	\gtrsim& \frac{T}{2}\frac{d}{dt}\int_{\Omega}\left[e^E(\nabla H)^2+(\Delta H)^2\right]dx-(\delta+\nu)\|H, \nabla H, \nabla H_t\|_{L^2(\Omega)}^2.
	\end{align}
	And by the straightforward computations we get
	\begin{align}\label{E:5.18}
		-\int_{\Omega}({\bf h_1}+{\bf h_2})\cdot \rho\nabla H_tdx
	\lesssim \left(\delta+\|{\bf w}\|_{L^{\infty}(\Omega)}\right)\| z, w_1, \nabla {\bf w}, \nabla H_t\|_{L^2(\Omega)}^2\lesssim \left(\delta+\nu\right)\| z, w_1, \nabla {\bf w}, \nabla H_t\|_{L^2(\Omega)}^2.
\end{align}
Thus,  integrating \eqref{E:5.12}  over $\Omega\times [0,t]$ and substituting \eqref{E:5.23new}-\eqref{E:5.18} into the resultants gives that
\begin{align}\label{E:5.28new}
	&\|H_t, \nabla H\|_{H^1(\Omega)}^2
	+\int_0^t\|\nabla H_t\|_{H^1(\Omega)}^2d\tau
	\lesssim \mathbb{N}^2(0)+\left(\delta+\nu \right)\int_0^t\left(\|w_1, \nabla {\bf w}, H_t\|_{L^2(\Omega)}^2+\|z, H\|_{H^1(\Omega)}^2\right)d\tau,
\end{align}
where we have used the fact that $\|\Delta H\|_{L^2(\Omega)}\geq c_0\|\nabla^2 H\|_{L^2(\Omega)}$ for some positive constant $c_0$.
Recall that it follows from \eqref{ebu} that \[\|H_t\|_{L^2(\Omega)}^2\lesssim \|\bar H_t\|_{L^2(\mathbb{R})}^2+\|\nabla H_t\|_{L^2(\Omega)}^2,\]
then \eqref{E:5.28new} can be converted into 
\begin{align}\label{E:5.19}
	&\|H_t, \nabla H\|_{H^1(\Omega)}^2
	+\int_0^t\|H_t\|_{H^2(\Omega)}^2d\tau
	\notag\\
	\lesssim& \mathbb{N}^2(0)+\int_0^t\|\bar H_t\|_{L^2(\mathbb{R})}^2d\tau+\left(\delta+\nu \right)\int_0^t\left(\|w_1, \nabla {\bf w}\|_{L^2(\Omega)}^2+\|z, H\|_{H^1(\Omega)}^2\right)d\tau.
\end{align}
Then taking the procedure as $C_3\eqref{E:5.19}+\eqref{E:5.11b}$ with some positive constant $C_3$ large enough leads to \eqref{E:5.22}.
Thus, the proof is completed.
\end{proof}

\begin{lemma}\label{L:5.2}
For any $T>0$, assume that $(z, {\bf w}, H)(x,t) \in X_{T}$ is the solution to the problem \eqref{E:5.3}-\eqref{E:1.3c}. If $\delta$ and $\nu$ are small, then it holds that
	\begin{align}\label{E:4.37}
	\|\bar H_t\|_{H^1(\mathbb{R})}^2\lesssim \|\partial_1 R\|_{L^2(\mathbb{R})}^2+\delta \|\bar H\|_{L^2(\mathbb{R})}^2+\nu\|\nabla H_t\|_{L^2(\Omega)}^2.
	\end{align}
\end{lemma}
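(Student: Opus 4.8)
The plan is to derive \eqref{E:4.37} as a coercive elliptic estimate for the one-dimensional function $\bar H_t$. Differentiating the zero-mode Poisson equation $\eqref{E:2.2}_3$ in time gives
\[
-\partial_1^2\bar H_t+e^{\tilde E}\bar H_t=\bar z_t-(e^{\tilde E})_t\bar H-\overline{N_2}_t ,
\]
where, from $\eqref{E:2.2}_1$ together with $\partial_1R=\bar r_1$, one has $\bar z_t=-\partial_1\bar r_1=-\partial_1^2R$. Multiplying by $\bar H_t$ and integrating over $\mathbb R$, the left-hand side equals $\int_{\mathbb R}\big((\partial_1\bar H_t)^2+e^{\tilde E}(\bar H_t)^2\big)\,dx_1$, which is coercive, $\gtrsim\|\bar H_t\|_{H^1(\mathbb R)}^2$, since $\tilde E$ ranges in the bounded interval between $E_-$ and $E_+$ so that $e^{\tilde E}\ge c>0$. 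It then remains to bound the three source terms, and every contribution of the form $\epsilon\|\bar H_t\|_{H^1(\mathbb R)}^2$ produced below will be absorbed into this coercive left-hand side.

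The first two source terms are routine. For $\int_{\mathbb R}\bar z_t\bar H_t\,dx_1$ I insert $\bar z_t=-\partial_1\bar r_1$ and integrate by parts once to reach $\int_{\mathbb R}\partial_1R\,\partial_1\bar H_t\,dx_1$, which Young's inequality bounds by $\epsilon\|\partial_1\bar H_t\|_{L^2(\mathbb R)}^2+C_\epsilon\|\partial_1R\|_{L^2(\mathbb R)}^2$; this delivers the first term of \eqref{E:4.37}. For $\int_{\mathbb R}(e^{\tilde E})_t\bar H\bar H_t\,dx_1$ I use $|(e^{\tilde E})_t|=|e^{\tilde E}\tilde E_t|\lesssim\delta$, which follows from the exponential decay of the profile derivatives in Lemma \ref{nl}; hence this term is $\lesssim\delta\|\bar H\|_{L^2(\mathbb R)}\|\bar H_t\|_{L^2(\mathbb R)}\le\epsilon\|\bar H_t\|_{L^2(\mathbb R)}^2+\delta\|\bar H\|_{L^2(\mathbb R)}^2$, giving the second term of \eqref{E:4.37}.

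The essential difficulty is the nonlinear term $\int_{\mathbb R}\overline{N_2}_t\bar H_t\,dx_1$, which couples the one-dimensional mode $\bar H_t$ to the full three-dimensional fields $H$ and $H_t$. Recalling \eqref{E:4.7} I split $N_{2t}=(e^{\tilde E})_t(e^H-1-H)+e^{\tilde E}(e^H-1)H_t$ and write $(e^H-1)H_t=HH_t+(e^H-1-H)H_t$. The device is to integrate in $x'$ and use the orthogonal decomposition $\int_{\mathbb T^2}HH_t\,dx'=\bar H\bar H_t+\int_{\mathbb T^2}H^{\neq}H_t^{\neq}\,dx'$, the cross terms vanishing because non-zero modes average to zero. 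The diagonal zero-mode part yields $\int_{\mathbb R}e^{\tilde E}\bar H(\bar H_t)^2\,dx_1\lesssim\nu\|\bar H_t\|_{L^2(\mathbb R)}^2$ by \eqref{E:3.1}, which is absorbed. The non-zero-mode part is the crucial one: pairing it with $\bar H_t$, using $\|H^{\neq}\|_{L^\infty(\Omega)}\lesssim\nu$ from \eqref{E:3.1} and then the Poincaré-type inequality $\|H_t^{\neq}\|_{L^2(\Omega)}\lesssim\|\nabla_{x'}H_t^{\neq}\|_{L^2(\Omega)}\lesssim\|\nabla H_t\|_{L^2(\Omega)}$ from Lemma \ref{l1}, I obtain a bound $\nu\|\bar H_t\|_{L^2(\mathbb R)}\|\nabla H_t\|_{L^2(\Omega)}\le\epsilon\|\bar H_t\|_{L^2(\mathbb R)}^2+\nu\|\nabla H_t\|_{L^2(\Omega)}^2$, i.e. precisely the third term of \eqref{E:4.37}. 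The remaining pieces all carry the quadratic factor $e^H-1-H$ and are therefore genuinely of higher order: they come either with the coefficient $|(e^{\tilde E})_t|\lesssim\delta$ or with an extra $\|H\|_{L^\infty(\Omega)}\lesssim\nu$, so after the same zero/non-zero splitting and the Poincaré inequality they reduce to the three listed terms up to negligibly small contributions that are harmless in the bootstrap.

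I expect this last term to be the main obstacle, and the decisive point is Lemma \ref{l1}: the coupling of $\bar H_t$ to $H_t^{\neq}$ can only be closed by trading the missing $L^2$ control of $H_t$ for a transverse derivative, replacing $\|H_t^{\neq}\|_{L^2(\Omega)}$ by $\|\nabla H_t\|_{L^2(\Omega)}$, which is exactly what allows the right-hand side of \eqref{E:4.37} to contain $\nu\|\nabla H_t\|_{L^2(\Omega)}^2$ and nothing worse.
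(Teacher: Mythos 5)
Your proposal is correct and takes essentially the same route as the paper: differentiate the zero-mode Poisson equation $\eqref{E:2.2}_3$ in time, test with $\bar H_t$, use the coercivity of $-\partial_1^2+e^{\tilde E}$, integrate by parts in the $\bar z_t=-\partial_1^2R$ term, and control the nonlinear source via Lemma \ref{l1}. The only cosmetic difference is that you split $\overline{N_{2t}}$ into zero and non-zero modes before estimating, whereas the paper first bounds it by $\|H\|_{L^{\infty}(\Omega)}\left(\|H_t\|_{L^2(\Omega)}^2+\|\bar H_t\|_{L^2(\mathbb{R})}^2\right)$ and then applies $\|H_t\|_{L^2(\Omega)}^2\lesssim\|\bar H_t\|_{L^2(\mathbb{R})}^2+\|\nabla H_t\|_{L^2(\Omega)}^2$; both treatments dispose of the purely quadratic remainder carrying $e^H-1-H$ with the same brief ``higher order, hence absorbable'' argument.
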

\begin{proof}
Differentiating $\eqref{E:2.2}_3$ with respect to $t$, and multiplying the resultant by $\bar H_t$ then integrating the resultant over $\mathbb{R}$ gives that
	\begin{align*}
	\int_{\mathbb{R}}(\partial_1 \bar H_t)^2dx_1+\int_{\mathbb{R}}e^{\tilde E}(\bar H_t)^2dx_1= \int_{\mathbb{R}}\partial_1 R\partial_1 \bar H_tdx_1-\int_{\mathbb{R}}\left(e^{\tilde E}\right)_t\bar H\bar H_tdx_1-\int_{\mathbb{R}}N_{1t}^{od}\bar H_tdx_1,
	\end{align*}
	which yields that
	\begin{align*}
		\| \bar H_t\|_{H^1(\mathbb R)}\lesssim& \|\partial_1 R\|_{L^2(\mathbb{R})}^2+\delta \|\bar H\|_{L^2(\mathbb{R})}^2+\|H\|_{L^{\infty}(\Omega)}\left(\|H_t\|_{L^2(\Omega)}^2+\|\bar H_t\|_{L^2(\mathbb{R})}^2\right)
		\notag\\
		\lesssim&\|\partial_1 R\|_{L^2(\mathbb{R})}^2+\delta \|\bar H\|_{L^2(\mathbb{R})}^2+\nu\left(\|\nabla H_t\|_{L^2(\Omega)}^2+\|\bar H_t\|_{L^2(\mathbb{R})}^2\right).
	\end{align*}
Then \eqref{E:4.37} follows due to the smallness of $\nu$.
 Thus, the proof is completed.
	\end{proof}
\subsection{the estimates of $z, {\bf w}$}
\begin{lemma}
	For any $T>0$, assume that $(z, {\bf w}, H)(x,t) \in X_{T}$ is the solution to the problem \eqref{E:5.3}-\eqref{E:1.3c}. If $\delta$ and $\nu$ are small, then it holds that
	\begin{align}\label{E:5.30}
		\|z, {\bf w},  H, \nabla H\|_{L^2(\Omega)}^2+\int_0^t\|\nabla {\bf w}\|_{L^2(\Omega)}^2d\tau\lesssim \mathbb{N}^2(0)+(\delta+\nu)\int_0^t\left(\|z, H\|_{H^1(\Omega)}^2+\|w_1\|_{L^2(\Omega)}^2\right)d\tau.
	\end{align}
\end{lemma}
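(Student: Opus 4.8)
The plan is to run a symmetrized $L^2$ energy estimate on the system \eqref{E:5.3}, testing the momentum equation $\eqref{E:5.3}_2$ against $\rho{\bf w}$ and the continuity equation $\eqref{E:5.3}_1$ against $\frac{Tz}{\rho}$, and adding. The two multipliers are chosen so that the pressure cross terms cancel: integration by parts turns $\int_\Omega T\nabla z\cdot{\bf w}\,dx$ into $-\int_\Omega Tz\,\dive{\bf w}\,dx$, while $\int_\Omega\frac{Tz}{\rho}\dive(\rho{\bf w})\,dx=\int_\Omega Tz\,\dive{\bf w}\,dx+\int_\Omega\frac{Tz}{\rho}{\bf w}\cdot\nabla\rho\,dx$, so the leading terms annihilate and only the error $\int_\Omega\frac{Tz}{\rho}{\bf w}\cdot\nabla\rho\,dx$ remains. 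The viscous term is clean because the weight $\rho$ cancels the $\frac1\rho$ in front of $\Delta{\bf w}$, namely $\int_\Omega\rho{\bf w}\cdot(-\frac1\rho\Delta{\bf w})\,dx=\int_\Omega|\nabla{\bf w}|^2\,dx$, which furnishes the dissipation on the left of \eqref{E:5.30}, while the time-derivative terms produce $\frac{d}{dt}\int_\Omega\big(\frac\rho2|{\bf w}|^2+\frac{T}{2\rho}z^2\big)\,dx$ together with $\rho_t$-weighted remainders.

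The key structural step is the electric coupling $\int_\Omega\rho\nabla H\cdot{\bf w}\,dx$, which I would treat exactly as in \eqref{E:5.5b}: integrating by parts and using $\dive(\rho{\bf w})=-\partial_t z-\dive(z\tilde{\bf u})$ from $\eqref{E:5.3}_1$ gives $\int_\Omega\rho\nabla H\cdot{\bf w}\,dx=\int_\Omega H\partial_t z\,dx+\int_\Omega H\,\dive(z\tilde{\bf u})\,dx$, and then substituting the Poisson relation $z=-\Delta H+e^{\tilde E}H+N_2$ from $\eqref{E:5.3}_3$ converts the first integral into $\frac{d}{dt}\frac12\int_\Omega(|\nabla H|^2+e^{\tilde E}H^2)\,dx$ up to terms carrying $\tilde E_t=O(\delta)$ or the quadratic nonlinearity $N_2$. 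Since $e^{\tilde E}\geq c>0$ and $\rho$ is bounded above and below, the resulting energy functional $\int_\Omega\big(\frac\rho2|{\bf w}|^2+\frac{T}{2\rho}z^2+\frac12|\nabla H|^2+\frac12 e^{\tilde E}H^2\big)\,dx$ is coercive and equivalent to $\|z,{\bf w},H,\nabla H\|_{L^2(\Omega)}^2$, producing precisely the left-hand side of \eqref{E:5.30}.

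Every remaining contribution — the $\rho_t$ and $\nabla\rho$ remainders above, the nonlinearities ${\bf h_1},{\bf h_2}$ on the right of $\eqref{E:5.3}_2$, the $\dive(z\tilde{\bf u})$ term, and the $N_2$- and $\tilde E_t$-pieces from the Poisson substitution — is at least quadratic in the perturbation and carries a small coefficient: either $\delta$, through the shock amplitude and the bounds of Lemma \ref{nl} together with \eqref{cs}, or $\nu$, through the a priori $W^{2,\infty}$ bound \eqref{E:3.1}. I would estimate all of them by $(\delta+\nu)$ times $\|z\|_{H^1(\Omega)}^2+\|H\|_{H^1(\Omega)}^2+\|{\bf w}\|_{L^2(\Omega)}^2+\|\nabla{\bf w}\|_{L^2(\Omega)}^2$, the $\|\nabla{\bf w}\|^2$ portion being absorbed into the viscous dissipation on the left thanks to its small prefactor. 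Integrating the resulting differential inequality over $[0,t]$ then leaves only the full velocity norm $\|{\bf w}\|_{L^2(\Omega)}^2$ to be dealt with.

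The main obstacle is exactly to replace this full velocity norm by the single component $\|w_1\|_{L^2(\Omega)}$ that appears in \eqref{E:5.30}. Here I would decompose each component via Lemma \ref{l1} and \eqref{ebu}, $\|w_j\|_{L^2(\Omega)}^2\lesssim\|\bar w_j\|_{L^2(\mathbb R)}^2+\|\nabla{\bf w}\|_{L^2(\Omega)}^2$: the non-zero modes feed into the dissipation, and $\|\bar w_1\|_{L^2(\mathbb R)}\lesssim\|w_1\|_{L^2(\Omega)}$ is the term deliberately kept on the right of \eqref{E:5.30}, to be closed later by the anti-derivative estimate of Lemma \ref{L:4.4n} through \eqref{poq}. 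The genuinely delicate point is the transverse zero modes $\bar w_2,\bar w_3$; I would exploit the special structure of the transverse momentum equations, in which the forcing terms $\partial_2 z,\partial_3 z,\partial_2 H,\partial_3 H$ are pure non-zero modes and the transverse components of ${\bf h_2}$ vanish because $\tilde u_2=\tilde u_3=0$ and $\tilde\rho$ depends on $x_1$ alone, so that $\bar w_2,\bar w_3$ obey a damped convection--diffusion system driven only by quadratic and shock-localized (hence $\sqrt{-\partial_1\tilde u_1}$-weighted) source terms. Controlling these transverse zero modes without appealing to transverse anti-derivatives is what I expect to be the crux of the argument; once it is in place, the coercivity of the energy functional and the smallness of $\delta+\nu$ give \eqref{E:5.30}.
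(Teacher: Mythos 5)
Your energy identity is exactly the paper's: test $\eqref{E:5.3}_2$ against $\rho{\bf w}$, convert $\int_\Omega\rho\nabla H\cdot{\bf w}\,dx$ through $\dive(\rho{\bf w})=-\partial_tz-\dive(z\tilde{\bf u})$ and the Poisson equation into $\frac{d}{dt}\frac12\int_\Omega\bigl(e^EH^2+|\nabla H|^2\bigr)dx$ plus small remainders, and kill the pressure cross term by testing $\eqref{E:5.3}_1$ against $\frac{Tz}{\rho}$; this reproduces \eqref{E:5.26}--\eqref{E:5.28}. The problem is the last step. You first concede a bound by $(\delta+\nu)\|{\bf w}\|_{L^2(\Omega)}^2$ and then declare that reducing the full velocity norm to $\|w_1\|_{L^2(\Omega)}^2$ --- in particular controlling the transverse zero modes $\bar w_2,\bar w_3$ by a separate damped convection--diffusion argument --- is ``the crux,'' which you do not carry out. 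As written this is a genuine gap: the lemma is not proved until that step is supplied.

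The gap is, however, self-inflicted: if you track the error terms precisely, the full $\|{\bf w}\|_{L^2(\Omega)}^2$ never appears, and no auxiliary analysis of $\bar w_2,\bar w_3$ is needed. The convection term ${\bf h_1}\cdot\rho{\bf w}=\frac{\rho}{2}{\bf u}\cdot\nabla|{\bf w}|^2$ cancels exactly against the $-\frac{\rho_t}{2}|{\bf w}|^2$ produced by $\rho\partial_t{\bf w}\cdot{\bf w}$, using $\dive(\rho{\bf u})=-\rho_t$, so it contributes only a divergence. Every remaining undifferentiated velocity factor is multiplied by a derivative of the planar shock profile, and since $\nabla\tilde\rho=\partial_1\tilde\rho\,{\bf e}_1$, $\Delta\tilde{\bf u}=\partial_1^2\tilde u_1\,{\bf e}_1$ and $\nabla\tilde{\bf u}\cdot{\bf w}=w_1\partial_1\tilde u_1\,{\bf e}_1$, the vector ${\bf h_2}$ is parallel to ${\bf e}_1$; hence ${\bf h_2}\cdot\rho{\bf w}=\rho\partial_1\tilde u_1w_1^2+\tfrac{\partial_1^2\tilde u_1}{\tilde\rho}zw_1-\tfrac{T\partial_1\tilde\rho}{\tilde\rho}zw_1$ involves only $w_1$. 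The other velocity occurrences (e.g.\ $\nabla z\cdot{\bf w}\,\frac{Tz}{\rho}$ in $I_{13}$) enter only through $\|{\bf w}\|_{L^\infty(\Omega)}\lesssim\nu$ as prefactors of $\|z\|_{H^1(\Omega)}^2$. This is precisely the bound \eqref{E:5.29}, and it closes \eqref{E:5.30} with only $\|w_1\|_{L^2(\Omega)}^2$ on the right.
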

\begin{proof}
	Multiplying  $\eqref{E:5.3}_2$ by $\cdot \rho{\bf w}$ yields that
	\begin{align} \label{E:5.26}
		& \frac12\left(\rho |{\bf w}|^2+e^{E} H^2+(\nabla H)^2\right)_t+|\nabla {\bf w}|^2+T \nabla z \cdot {\bf w}  = \operatorname{div}(\cdots)+I_{12}-{\bf h_2}\cdot \rho{\bf w},
	\end{align}
where $(\cdots)=\frac{1}{2}\nabla \left(|{\bf w}|^2\right)-H(\rho {\bf w}+z \tilde {\bf u})+H\nabla H_t-\frac{\rho{\bf u}}{2}|{\bf w}|^2$
and 
\begin{align}
I_{12}=\nabla H \cdot z \tilde{{\bf u}}+\left(\frac{e^E}{2}\right)_tH^2-(e^{\tilde E})_t(e^H-1)H.\nonumber
\end{align}
To deal with the term $T \nabla z \cdot {\bf w}$, we multiply $\eqref{E:5.3}_1$ by $\frac{Tz}{\rho}$ to get
\begin{align}\label{E:5.27}
	\left(\frac{T}{2 \rho} z^2\right)_t+T z \dive{\bf w}=\left(\frac{T}{2 \rho}\right)_tz^2-[\nabla z\cdot {\bf w}+\partial_1\tilde \rho w_1+\dive(z\tilde {\bf u})]\frac{Tz}{\rho}=:\left(\frac{T}{2 \rho}\right)_tz^2+I_{13}.
\end{align}
Adding \eqref{E:5.26} and \eqref{E:5.27} up gives that 
	\begin{align} \label{E:5.28}
	& \frac12\left(\rho |{\bf w}|^2+e^{{E}} H^2+(\nabla H)^2+\frac{T}{\rho} z^2\right)_t+|\nabla {\bf w}|^2  = \operatorname{div}(\cdots)+I_{12}+I_{13}-{\bf h_2}\cdot \rho{\bf w}.
\end{align}
It follows from 
\[{\bf h_2}\cdot \rho{\bf w}=\rho \partial_1 \tilde u_1w_1^2+\frac{\partial_1^2 \tilde u_1zw_1}{\tilde \rho}-\frac{T\partial_1\tilde \rho zw_1}{\tilde \rho}\]
that 
\begin{align}\label{E:5.29}
\|I_{12}+I_{13}-{\bf h_2}\cdot \rho{\bf w}\|_{L^1(\Omega)}\lesssim& (\delta+\|{\bf w}, H_t\|_{L^{\infty}(\Omega)})\left(\|z, H\|_{H^1(\Omega)}^2+\|w_1\|_{L^2(\Omega)}^2\right)
\notag\\
\lesssim&(\delta+\nu)\left(\|z, H\|_{H^1(\Omega)}^2+\|w_1\|_{L^2(\Omega)}^2\right).
\end{align}
Thus integrating \eqref{E:5.28} over $\Omega\times [0,t]$ and using \eqref{E:5.29} we conclude \eqref{E:5.30}. Therefore, the proof is completed.
	\end{proof}
\begin{lemma}\label{L:5.4}
For any $T>0$, assume that $(z, {\bf w}, H)(x,t) \in X_{T}$ is the solution to the problem \eqref{E:5.3}-\eqref{E:1.3c}. If $\delta$ and $\nu$ are small, then it holds that
				\begin{align}\label{E:5.40}
		&\|\nabla z, \nabla {\bf w}\|_{L^2(\Omega)}^2+\int_0^t\|\nabla z, \nabla^2 {\bf w}\|_{L^2(\Omega)}^2d\tau
		\notag\\
		\lesssim& \mathbb{N}^2(0)+\| {\bf w}\|_{L^{2}(\Omega)}^2+\int_0^t	\| \nabla {\bf w}, \nabla H\|_{L^{2}(\Omega)}^2 d\tau+(\delta+\nu)\int_0^t\|z, w_1\|_{L^2(\Omega)}^2d\tau.
	\end{align}
\end{lemma}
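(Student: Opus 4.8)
\emph{Proof strategy.} The plan is to run two weighted energy estimates on the momentum equation $\eqref{E:5.3}_2$ and combine them so that the two dissipations $\|\nabla^2{\bf w}\|^2$ and $\|\nabla z\|^2$ both appear on the left-hand side. Throughout I would use the continuity equation $\eqref{E:5.3}_1$ written as $\dive{\bf w}=-\frac{1}{\rho}\left[z_t+\nabla\rho\cdot{\bf w}+\dive(z\tilde{\bf u})\right]$, the Poisson equation $\eqref{E:5.3}_3$, and the smallness bounds $\|\tilde{\bf u}\|_{W^{1,\infty}(\mathbb{R})}\lesssim\delta$ and $\|z,{\bf w},H\|_{W^{2,\infty}(\Omega)}\lesssim\nu$ from \eqref{cs} and \eqref{E:3.1}.

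First I would test $\eqref{E:5.3}_2$ with $-\Delta{\bf w}$ and integrate over $\Omega$. The viscous term yields $\int_\Omega\frac{1}{\rho}|\Delta{\bf w}|^2\gtrsim\|\nabla^2{\bf w}\|_{L^2(\Omega)}^2$ thanks to $\|\Delta{\bf w}\|_{L^2(\Omega)}\geq c_0\|\nabla^2{\bf w}\|_{L^2(\Omega)}$, the time term integrates by parts in space to $\frac12\frac{d}{dt}\|\nabla{\bf w}\|_{L^2(\Omega)}^2$, while the pressure term $\int_\Omega\frac{T}{\rho}\nabla z\cdot\Delta{\bf w}$ and the potential term $\int_\Omega\nabla H\cdot\Delta{\bf w}$ are split by Young's inequality into $\epsilon\|\nabla^2{\bf w}\|^2+C(\|\nabla z\|^2+\|\nabla H\|^2)$. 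Recalling ${\bf h_2}=\big(\frac1\rho-\frac1{\tilde\rho}\big)\big(T\nabla\tilde\rho-\Delta\tilde{\bf u}\big)+w_1\partial_1\tilde u_1\sim\delta(z+w_1)$ and ${\bf h_1}=\tilde{\bf u}\cdot\nabla{\bf w}+{\bf w}\cdot\nabla{\bf w}\sim(\delta+\nu)\nabla{\bf w}$, the nonlinear contributions produce only $(\delta+\nu)$-multiples of $\|\nabla{\bf w}\|^2$ and $\|z,w_1\|^2$. Absorbing the $\epsilon$-terms gives
\begin{align*}
\frac{d}{dt}\|\nabla{\bf w}\|_{L^2(\Omega)}^2+c\|\nabla^2{\bf w}\|_{L^2(\Omega)}^2\lesssim\|\nabla z\|_{L^2(\Omega)}^2+\|\nabla H\|_{L^2(\Omega)}^2+\|\nabla{\bf w}\|_{L^2(\Omega)}^2+(\delta+\nu)\|z,w_1\|_{L^2(\Omega)}^2.
\end{align*}

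Next I would test $\eqref{E:5.3}_2$ with $\nabla z$. Now the pressure term furnishes the good dissipation $\int_\Omega\frac{T}{\rho}|\nabla z|^2\gtrsim\|\nabla z\|_{L^2(\Omega)}^2$, the time term becomes $\frac{d}{dt}\int_\Omega{\bf w}\cdot\nabla z+\int_\Omega\dive{\bf w}\,z_t$, and substituting $z_t=-\dive(\rho{\bf w})-\dive(z\tilde{\bf u})$ bounds the latter by $\|\nabla{\bf w}\|^2$ plus small terms. The delicate term is the viscous cross term $-\int_\Omega\frac{1}{\rho}\Delta{\bf w}\cdot\nabla z$: rather than a naive Young's inequality against $\|\nabla^2{\bf w}\|^2$, I would decompose $\Delta{\bf w}=\nabla\dive{\bf w}-\nabla\times\mbox{curl}{\bf w}$, whose curl part pairs with $\nabla z$ only through the harmless weight $\nabla(\frac1\rho)\sim\delta+\nabla z$, and then replace $\dive{\bf w}$ in the gradient part by the continuity equation. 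This turns the cross term into $\frac12\frac{d}{dt}\int_\Omega\frac{1}{\rho^2}|\nabla z|^2$ up to lower-order and $(\delta+\nu)$-small remainders. Handling $\int_\Omega\nabla H\cdot\nabla z$ by Young's inequality, this yields
\begin{align*}
\frac{d}{dt}\Big(\int_\Omega{\bf w}\cdot\nabla z+\tfrac12\int_\Omega\tfrac{1}{\rho^2}|\nabla z|^2\Big)+c'\|\nabla z\|_{L^2(\Omega)}^2\lesssim\|\nabla{\bf w}\|_{L^2(\Omega)}^2+\|\nabla H\|_{L^2(\Omega)}^2+(\delta+\nu)\|z,w_1\|_{L^2(\Omega)}^2.
\end{align*}

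Finally I would form $C_4\cdot(\text{second estimate})+(\text{first estimate})$ with $C_4$ large, so that $C_4c'\|\nabla z\|^2$ absorbs the $\|\nabla z\|^2$ generated in the first estimate, and integrate over $[0,t]$. The indefinite energy $\int_\Omega{\bf w}\cdot\nabla z$ evaluated at time $t$ is controlled by $\frac14\int_\Omega\frac{1}{\rho^2}|\nabla z|^2+C\|{\bf w}\|_{L^2(\Omega)}^2$, so its positive part is absorbed into the left-hand side and the remainder produces exactly the instantaneous term $\|{\bf w}\|_{L^2(\Omega)}^2$ on the right, while the data contribute $\mathbb{N}^2(0)$; this gives \eqref{E:5.40}. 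The main obstacle is precisely the viscous cross term in the second estimate: a direct Young's inequality would couple $\|\nabla z\|^2$ and $\|\nabla^2{\bf w}\|^2$ with constants saturating the arithmetic--geometric mean inequality, so the two dissipations could never be closed simultaneously. Converting this term into a time derivative of $\|\nabla z\|^2$ via the div--curl structure together with the continuity equation is what decouples them and makes the combination close.
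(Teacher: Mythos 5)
Your proof is correct and follows the same two-step architecture as the paper's: a first-order energy estimate for ${\bf w}$ producing the dissipation $\int_0^t\|\nabla^2{\bf w}\|_{L^2(\Omega)}^2$, a cross estimate obtained by testing $\eqref{E:5.3}_2$ with $\nabla z$ producing $\int_0^t\|\nabla z\|_{L^2(\Omega)}^2$, and a combination with one large constant, with the instantaneous term $\|{\bf w}\|_{L^2(\Omega)}^2$ arising from the indefinite energy $\int_\Omega{\bf w}\cdot\nabla z\,dx$ exactly as in the paper. The one structural difference is where the continuity-equation substitution $\dive{\bf w}=-\frac1\rho\left[z_t+\nabla\rho\cdot{\bf w}+\dive(z\tilde{\bf u})\right]$ is deployed to break the $\nabla z$--$\nabla^2{\bf w}$ coupling. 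The paper applies it in the \emph{first} estimate, to the pressure cross term $-\frac{T}{\rho}\partial_x^\alpha z\,\partial_x^\alpha(\dive{\bf w})$, converting it into $\partial_t\big(\frac{T}{2\rho^2}(\partial_x^\alpha z)^2\big)$ (see \eqref{E:5.32}--\eqref{E:5.36}), so that only $(\delta+\nu)\int_0^t\|\nabla z\|^2$ leaks to the right; the viscous cross term $\frac{\Delta{\bf w}}{\rho}\cdot\nabla z$ in the second estimate is then treated by a plain Young inequality, leaving $\int_0^t\|\nabla^2{\bf w}\|^2$ on the right of \eqref{E:5.39} to be absorbed by the large multiple of the first estimate. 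You do the mirror image: Young on the pressure cross term in the first estimate, and the div--curl decomposition plus continuity equation to convert the viscous cross term into $\partial_t\big(\frac{1}{2\rho^2}|\nabla z|^2\big)$ in the second. Both placements close, and each requires the trick in exactly one of the two estimates; your closing remark that a direct Young's inequality ``could never'' work is therefore slightly overstated --- the paper itself uses direct Young on the viscous cross term, having made the other estimate clean. A minor bonus of your route: the curl contribution is even more harmless than you state, since after integration by parts it reads $\int_\Omega\mbox{curl}\,{\bf w}\cdot\big(\nabla(\tfrac1\rho)\times\nabla z\big)dx$ and the $\nabla z$-part of $\nabla(\tfrac1\rho)$ cancels against $\nabla z$ identically, leaving only an $O(\delta)$ weight from $\nabla\tilde\rho$.
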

\begin{proof}
We multiply $\partial_x^{\alpha}\eqref{E:5.3}_2$ by $\cdot\partial_x^{\alpha}{\bf w}$ with the multi-index $|\alpha|=1$ to get
	\begin{align}\label{E:5.31}
	& \partial_t\left(\frac{1}{2}|\partial_x^\alpha {\bf w}|^2\right)-\frac{T }{\rho}\partial_x^\alpha z\partial_x^\alpha(\dive {\bf w})+ \frac{|\nabla \partial_x^\alpha{\bf w}|^2}{\rho}
	=\dive (\cdots)+\partial_x^{\alpha}H\dive \partial_x^{\alpha}{\bf w}-\partial_x^\alpha({\bf h}_1+{\bf h}_2)\cdot\partial_x^{\alpha}{\bf w}+I_{14},
\end{align}
where $(\cdots)=-\frac{T\partial_x^{\alpha}z\partial_x^{\alpha}{\bf w}}{\rho}+\frac{\nabla\left(\partial_x^{\alpha}{\bf w}\right)\cdot \partial_x^{\alpha}{\bf w}}{\rho}-\partial_x^{\alpha}H\partial_x^{\alpha}{\bf w}$
and 
\begin{align*}
	I_{14}=&\nabla \left(\frac{T}{\rho}\right)\partial_x^\alpha z \cdot \partial_x^\alpha{\bf w} -\nabla \partial_x^\alpha{\bf w}\cdot  \partial_x^\alpha{\bf w}\cdot \nabla \left(\frac{1}{\rho}\right)	
	+ \sum_{|\beta|=1, \beta \leq \alpha} \left[\partial_x^{\beta}\left(\frac{1}{\rho}\right)\Delta {\bf w}-\partial_x^{\beta}\left(\frac{T}{\rho}\right)\nabla z\right]\cdot \partial_x^\alpha{\bf w}.
\end{align*}
It follows from $\eqref{E:5.3}_1$ that 
\begin{align}\label{E:5.33}
	\partial_x^\alpha(\operatorname{div} {\bf w})=\frac{-1}{\rho}\left\{\partial_x^\alpha z_t+\sum_{|\beta|=1, \beta \leq \alpha} \partial_x^\beta \rho \partial_x^{\alpha-\beta}(\operatorname{div} {\bf w})+\partial_x^\alpha(\nabla z \cdot {\bf u}+\partial_1\tilde \rho w_1+z \dive \tilde{\bf u})\right\} .
\end{align}
By the calculations we get
\begin{align*}
		& -\frac{T}{\rho} \partial_x^\alpha z\left(-\frac{1}{\rho}\right) \partial_x^\alpha z_t=\frac{T}{\rho^2}\left(\frac{1}{2}\left(\partial_x^\alpha z\right)^2\right)_t=\left(\frac{T}{2 \rho^2}\left(\partial_x^\alpha z\right)^2\right)_t-\left(\frac{T}{\rho^2}\right)_t \frac{1}{2}\left(\partial_x^\alpha z\right)^2
\end{align*}
and 
\begin{align*}
		& -\frac{T}{\rho} \partial_x^\alpha z\left(-\frac{1}{\rho}\right) \partial_ x^\alpha(\nabla z \cdot {\bf u})
	= \dive\left(\frac{T {\bf u}\left(\partial_x^\alpha z\right)^2}{2 \rho^2}\right)-\dive \left(\frac{T {\bf u}}{2 \rho^2}\right)\left(\partial_x^\alpha z\right)^2+\sum_{|\beta|=1, \beta \leq \alpha}\frac{T}{\rho^2} \partial_x^\alpha z\partial_x^{\beta} {\bf u}\cdot \partial_x^{\alpha-\beta}\nabla z,
\end{align*}
which yields that 
\begin{align}\label{E:5.32}
-\int_{\Omega}\frac{T }{\rho}\partial_x^\alpha z\partial_x^\alpha(\dive {\bf w})dx\gtrsim & \frac{d}{dt}\int_{\Omega}\frac{T}{2 \rho^2}\left(\partial_x^\alpha z\right)^2dx-\left(\delta+\|{\bf w}, \nabla z, \nabla {\bf w}\|_{L^{\infty}(\Omega)}\right)\|z, w_1, \nabla z, \nabla {\bf w}\|_{L^{2}(\Omega)}^2
\notag\\
\gtrsim & \frac{d}{dt}\int_{\Omega}\frac{T}{2 \rho^2}\left(\partial_x^\alpha z\right)^2dx-\left(\delta+\nu\right)\|z, w_1, \nabla z, \nabla {\bf w}\|_{L^{2}(\Omega)}^2.
\end{align}
In addition, we can verify that 
\begin{align}
	-\int_{\Omega}\partial_x^\alpha({\bf h}_1+{\bf h}_2)\partial_x^{\alpha}{\bf w}dx
	\lesssim&(\delta+\nu)\|z, w_1, \nabla z, \nabla {\bf w}, \nabla^2 {\bf w}\|_{L^{2}(\Omega)}^2, \\
	\int_{\Omega}\partial_x^{\alpha}H\dive \partial_x^{\alpha}{\bf w}dx\leq &\epsilon\|\nabla \partial_x^\alpha {\bf w}\|_{L^{2}(\Omega)}^2 +C_{\epsilon}\| \nabla H\|_{L^{2}(\Omega)}^2 
\end{align}
and 
\begin{align}\label{E:5.34}
	\|I_{14}\|_{L^1(\Omega)}\lesssim& \left(\delta+\|\nabla {\bf w}, \nabla z\|_{L^{\infty}(\Omega)}\right)\| \nabla z, \nabla {\bf w}, \nabla^2 {\bf w}\|_{L^{2}(\Omega)}^2
	\lesssim \left(\delta+\nu\right)\| \nabla z, \nabla {\bf w}, \nabla^2 {\bf w}\|_{L^{2}(\Omega)}^2.
\end{align}
Then integrating \eqref{E:5.31} over $\Omega\times [0,t]$ and using \eqref{E:5.32}-\eqref{E:5.34}, we choose some constant $\epsilon>0$ small enough to get
\begin{align}\label{E:5.36}
	&\|\nabla z, \nabla {\bf w}\|_{L^{2}(\Omega)}^2+\int_0^t	\|\nabla^2 {\bf w}\|_{L^{2}(\Omega)}^2d\tau
	\notag\\
	\lesssim& \mathbb{N}^2(0)+C_{\epsilon}\int_0^t	\|\nabla H\|_{L^{2}(\Omega)}^2 d\tau
	+\left(\delta+\nu\right)\int_0^t\|z, w_1, \nabla z, \nabla {\bf w}\|_{L^{2}(\Omega)}^2d\tau.
\end{align}

	Multiplying $\eqref{E:5.3}_2$ by $\cdot \nabla z$ gives that 
	\begin{align}\label{E:5.38}
	\frac{T}{\rho}(\nabla z)^2=&-	({\bf w}\cdot \nabla z)_t+\dive({\bf w}z_t)+\dive {\bf w}\left[{\bf w}\cdot \nabla z+\partial_1\tilde \rho w_1+\rho \dive {\bf w}+\dive (z\tilde {\bf u})\right]
	\notag\\
	&-({\bf h}_1+{\bf h}_2)\cdot \nabla z-\nabla H\cdot \nabla z+\frac{\Delta {\bf w}}{\rho}\cdot \nabla z.
	\end{align}
Immediately, we integrate \eqref{E:5.38} over $
\Omega\times [0, t]$ and use $\|{\bf w}\|_{L^{\infty}(\Omega)}\lesssim \nu$ to get
\begin{align}\label{E:5.39}
	&\int_0^t\|\nabla z\|_{L^2(\Omega)}^2d\tau
	\lesssim \mathbb{N}^2(0)+\|{\bf w}, \nabla z\|_{L^2(\Omega)}^2+\int_0^t\|\nabla {\bf w}, \nabla^2 {\bf w}, \nabla H\|_{L^2(\Omega)}^2d\tau+\delta\int_0^t\|z, w_1\|_{L^2(\Omega)}^2d\tau.
\end{align}
Taking the procedure as $C_4\eqref{E:5.36}+\eqref{E:5.39}$ with some positive constnat $C_4$ large enough leads to \eqref{E:5.40} directly. Thus, the proof is completed.
\end{proof}

Based on Lemma \ref{L:4.4n} and Lemmas \ref{L:5.1n}-\ref{L:5.4} we derive the estimates for the anti-derivatives and the lower-order derivatives of the solution $(z, {\bf w}, H)(x,t)$.
\begin{lemma}\label{L:5.6}
	For any $T>0$, assume that $(z, {\bf w}, H)(x,t) \in X_{T}$ is the solution to the problem \eqref{E:5.3}-\eqref{E:1.3c}. If $\delta$ and $\nu$ are small, then it holds that
		\begin{align}\label{E:5.41}
		&\|R\|_{L^2(\mathbb{R})}^2+\|Z,\bar H\|_{H^1(\mathbb{R})}^2+\|H\|_{H^2(\Omega)}^2+\|z, {\bf w}, H_t\|_{H^1(\Omega)}^2+\int_0^t\left(\|H, H_t\|_{H^2(\Omega)}^2+\|\nabla z, \nabla {\bf w}, \nabla^2 {\bf w}\|_{L^2(\Omega)}^2\right)d\tau
	\notag\\
	&+\int_0^t\left(\|\sqrt{-\partial_1\tilde u_1}R,\partial_1R,\partial_1Z\|_{L^2(\mathbb{R})}^2+\|\bar H_t\|_{H^1(\mathbb{R})}^2+\|\bar H\|_{H^2(\mathbb{R})}^2\right)d\tau
		\lesssim \mathbb{N}^2(0).
	\end{align}
\end{lemma}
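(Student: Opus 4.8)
The plan is to obtain \eqref{E:5.41} as a single weighted sum of the five \emph{a priori} estimates already in hand — the zero-mode bound \eqref{E:4.41} of Lemma \ref{L:4.4n}, the two potential bounds \eqref{E:5.22} and \eqref{E:4.37}, the lower-order bound \eqref{E:5.30}, and the first-order bound \eqref{E:5.40} of Lemma \ref{L:5.4} — choosing the weights, and only afterwards $\delta$ and $\nu$, so that on the combined right-hand side every term is either consumed by a dissipation term on the left or carries a smallness factor $\delta+\nu$. The degenerate shock dissipation $\int_0^t\|\sqrt{-\partial_1\tilde u_1}R\|^2$ and the anti-derivative dissipations $\int_0^t\|\partial_1R,\partial_1Z\|^2$ required by \eqref{E:5.41} come from \eqref{E:4.41}; the parabolic gains $\int_0^t\|\nabla z,\nabla{\bf w},\nabla^2{\bf w}\|^2$ from \eqref{E:5.30}--\eqref{E:5.40}; and the potential gains $\int_0^t\|H,H_t\|_{H^2(\Omega)}^2$, $\int_0^t\|\bar H\|_{H^2(\mathbb{R})}^2$ from \eqref{E:5.22} and \eqref{E:4.41}.

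A preparatory reduction is essential. Combining \eqref{E:5.1p}, \eqref{E:5.1} and \eqref{E:5.2} of Lemma \ref{L:5.1n} and using the smallness of $\delta+\nu$ gives
\begin{align}\label{E:poqp}
\|z, w_1, r_1\|_{L^2(\Omega)}^2 \lesssim \|\partial_1 Z, \partial_1 R\|_{L^2(\mathbb{R})}^2 + \|\nabla z, \nabla {\bf w}\|_{L^2(\Omega)}^2.
\end{align}
Via \eqref{E:poqp}, each low-order factor $(\delta+\nu)\int_0^t\|z, r_1, w_1\|_{L^2(\Omega)}^2\,d\tau$ occurring in \eqref{E:4.41}, \eqref{E:5.22}, \eqref{E:5.30} and \eqref{E:5.40} becomes $(\delta+\nu)\int_0^t(\|\partial_1 Z,\partial_1 R\|_{L^2(\mathbb{R})}^2+\|\nabla z,\nabla{\bf w}\|_{L^2(\Omega)}^2)\,d\tau$, which for small $\delta+\nu$ is swallowed by the dissipations named above. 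In parallel I would integrate \eqref{E:4.37} in time to produce the term $\int_0^t\|\bar H_t\|_{H^1(\mathbb{R})}^2\,d\tau$ that appears on the left of \eqref{E:5.41}.

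The core of the argument is then to track the genuinely large (non-small-prefactor) cross terms and to check that they admit no cycle, so that a hierarchy of constants closes. The dangerous couplings are: the full-strength $\int_0^t\|\nabla H\|^2$ in \eqref{E:5.40}, which must be absorbed by the potential dissipation $\int_0^t\|H\|_{H^2(\Omega)}^2$ of \eqref{E:5.22}; the full-strength $\|{\bf w}\|^2$ and $\int_0^t\|\nabla{\bf w}\|^2$ in \eqref{E:5.40}, absorbed by the left side of \eqref{E:5.30}; and the full-strength $\|\bar H\|^2$, $\int_0^t\|\bar H\|^2$, $\int_0^t\|\bar H_t\|^2$ generated by \eqref{E:5.22}, absorbed respectively by $\|\bar H\|_{H^1(\mathbb{R})}^2$ and $\int_0^t\|\bar H\|_{H^2(\mathbb{R})}^2$ from \eqref{E:4.41} together with the integrated \eqref{E:4.37}. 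Viewing this as a directed graph — \eqref{E:5.40} pointing to \eqref{E:5.22} and \eqref{E:5.30}, \eqref{E:5.22} pointing to \eqref{E:4.41} and \eqref{E:4.37}, \eqref{E:4.37} pointing to \eqref{E:4.41}, while \eqref{E:4.41} and \eqref{E:5.30} radiate only small-prefactor terms — one sees there is no loop. Hence I can assign to \eqref{E:5.40}, \eqref{E:5.30}, \eqref{E:5.22}, \eqref{E:4.37}, \eqref{E:4.41} an increasing weight sequence such as $1, K, K, K^2, K^3$ and fix $K$ large first; every surviving right-hand contribution then carries a factor $\delta+\nu$ (or $\nu$, as with the non-integrated $\nu\|\nabla H\|^2$ of \eqref{E:4.41}, absorbed by $\|\nabla H\|^2$ on the left of \eqref{E:5.30}), so a final choice of $\delta,\nu$ small relative to the now-fixed powers of $K$ collapses the right side to $\mathbb{N}^2(0)$, giving \eqref{E:5.41}.

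I expect the main obstacle to be precisely the self-consistency of this weight hierarchy. Because the Poisson structure forces the potential dissipation $\int_0^t\|H\|_{H^2(\Omega)}^2$ to carry the undamped gradient $\int_0^t\|\nabla H\|^2$ left over from the momentum estimate \eqref{E:5.40}, the potential bound \eqref{E:5.22} must be weighted more heavily than \eqref{E:5.40}; yet \eqref{E:5.22} in turn deposits the zero-mode quantities $\bar H,\bar H_t$ onto the zero-mode estimates, so one must be certain the chain terminates at \eqref{E:4.41} — which emits only smallness — rather than looping back. Confirming this acyclicity, and that \eqref{E:poqp} genuinely reduces all the hazardous low-order norms to controlled dissipation, is the delicate point; once it is secured, the linear combination and the concluding smallness absorption are routine.
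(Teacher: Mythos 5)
Your proposal is correct and follows essentially the same route as the paper: the paper forms $C_5\int_0^t\eqref{E:4.37}\,d\tau+\eqref{E:5.22}+C_6\eqref{E:4.41}$, then $C_7\eqref{E:5.30}+\eqref{E:5.40}+C_8\eqref{E:5.40p}$, and finally invokes Lemma \ref{L:5.1n} to convert the residual $(\delta+\nu)\int_0^t\|z,r_1,w_1\|_{L^2(\Omega)}^2\,d\tau$ into dissipation-controlled quantities — exactly the weighted combination, absorption hierarchy, and low-order reduction you describe. Your explicit acyclicity check of the full-strength couplings is the (implicit) reason the paper's nested choice of large constants closes.
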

\begin{proof}
We take the procedure as $C_5\int_0^t\eqref{E:4.37}d\tau+\eqref{E:5.22}+C_6\eqref{E:4.41}$ with some positive constants $C_5, C_6$ large enough and obtain 
	\begin{align}\label{E:5.40p}
&\|R\|_{L^2(\mathbb{R})}^2+\|Z,\bar H\|_{H^1(\mathbb{R})}^2+\|H\|_{H^2(\Omega)}^2+\|H_t \|_{H^1(\Omega)}^2
\notag\\
&+\int_0^t \left(\|\sqrt{-\partial_1\tilde u_1}R,\partial_1R,\partial_1Z\|_{L^2(\mathbb{R})}^2+\|\bar H_t\|_{H^1(\mathbb{R})}^2+\|\bar H\|_{H^2(\mathbb{R})}^2+\|H, H_t\|_{H^2(\Omega)}^2\right)d\tau
\notag\\
\lesssim&\mathbb{N}^2(0)+\delta\|z\|_{L^2(\Omega)}^2
+(\delta+\nu) \int_0^t\left(\|z, \nabla {\bf w}\|_{H^1(\Omega)}^2+\|r_1, w_1\|_{L^2(\Omega)}^2\right) d \tau.
\end{align}
	Then taking the procedure as $C_7\eqref{E:5.30}+\eqref{E:5.40}+C_8\eqref{E:5.40p}$ with some positive constants $C_7, C_8$ large enough, we have
		\begin{align}\label{E:5.41bo}
		&\|R\|_{L^2(\mathbb{R})}^2+\|Z,\bar H\|_{H^1(\mathbb{R})}^2+\|H\|_{H^2(\Omega)}^2+\|z, {\bf w}, H_t\|_{H^1(\Omega)}^2
	\notag\\
	&+\int_0^t\left(\|\sqrt{-\partial_1\tilde u_1}R,\partial_1R,\partial_1Z\|_{L^2(\mathbb{R})}^2+\|\bar H_t\|_{H^1(\mathbb{R})}^2+\|\bar H\|_{H^2(\mathbb{R})}^2\right)d\tau
	\notag\\
	&+\int_0^t\left(\|H, H_t\|_{H^2(\Omega)}^2
	+\|\nabla z, \nabla {\bf w}, \nabla^2 {\bf w}\|_{L^2(\Omega)}^2\right)d\tau
		\lesssim \mathbb{N}^2(0)+(\delta+\nu)\int_0^t\|z, r_1, w_1\|_{L^2(\Omega)}^2d\tau.
	\end{align}
Finally, we substitute \eqref{E:5.1p}-\eqref{E:5.2} into \eqref{E:5.41bo}
to obtain \eqref{E:5.41}.
Thus, the proof is completed.
\end{proof}

Furthermore, we can obtain the estimates for the higher-order derivatives of $(z, {\bf w}, H)(x,t)$ by the similar way shown in Lemmas \ref{L:5.2} and \ref{L:5.4}.
\begin{lemma}\label{L:5.7}
	For any $T>0$, assume that $(z, {\bf w}, H)(x,t) \in X_{T}$ is the solution to the problem \eqref{E:5.3}-\eqref{E:1.3c}. If $\delta$ and $\nu$ are small, then it holds that
	\begin{align}
		&\|\nabla^{|\alpha|} z, \nabla^{|\alpha|} {\bf w}\|_{L^{2}(\Omega)}^2+\int_0^t	\|\nabla^{|\alpha|+1} {\bf w}, \nabla^{|\alpha|} z\|_{L^{2}(\Omega)}^2d\tau
		\lesssim \mathbb{N}^2(0),\quad |\alpha|=2, 3, 4,\label{E:5.42}\\
			&\|\nabla^{|\beta|} H\|_{H^2(\Omega)}^2+\|\nabla^{|\beta|} H_t \|_{H^1(\Omega)}^2+\int_0^t\left[\|\nabla^{|\beta|} H\|_{H^2(\Omega)}^2+\|\nabla^{|\beta|+1} H_t\|_{H^1(\Omega)}^2\right]d\tau
		\lesssim \mathbb{N}^2(0), \quad |\beta|=1, 2, 3.\label{E:5.42a}
	\end{align}
\end{lemma}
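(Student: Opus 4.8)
The plan is to reproduce, at each derivative level, the energy arguments of Lemmas~\ref{L:5.2} and \ref{L:5.4}, organising everything as a single induction on the differentiation order with Lemma~\ref{L:5.6} serving as the base case. Throughout, the top-order terms are closed using the a priori smallness \eqref{ass} together with the Gagliardo--Nirenberg inequalities of Lemma~\ref{de}, which let one place the $L^\infty$ norms on the factors carrying few derivatives.

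For the $(z,{\bf w})$-bound \eqref{E:5.42}, fix $\alpha$ with $|\alpha|\in\{2,3,4\}$. I would first apply $\partial_x^\alpha$ to $\eqref{E:5.3}_2$ and pair with $\partial_x^\alpha{\bf w}$, exactly as in \eqref{E:5.31}; invoking the differentiated continuity equation \eqref{E:5.33} to rewrite $\partial_x^\alpha(\dive{\bf w})$ generates the time derivative of $\frac{T}{2\rho^2}(\partial_x^\alpha z)^2$ (the analogue of \eqref{E:5.32}) and produces the dissipation $\|\nabla^{|\alpha|+1}{\bf w}\|_{L^2(\Omega)}^2$, at the cost of a single borderline term $\int_\Omega\partial_x^\alpha H\,\dive\partial_x^\alpha{\bf w}\,dx$ that is split as $\epsilon\|\nabla^{|\alpha|+1}{\bf w}\|_{L^2(\Omega)}^2+C_\epsilon\|\nabla^{|\alpha|}H\|_{L^2(\Omega)}^2$. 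To recover the missing $z$-dissipation I would then pair $\partial_x^\alpha\eqref{E:5.3}_2$ with $\partial_x^\alpha\nabla z$, following \eqref{E:5.38}--\eqref{E:5.39}, which yields $\frac{T}{\rho}|\nabla^{|\alpha|}z|^2$ modulo the ${\bf w}$-dissipation and $\|\nabla^{|\alpha|}H\|_{L^2(\Omega)}^2$. A large-constant combination of the two then gives the analogue of \eqref{E:5.40} at order $|\alpha|$.

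For the potential bound \eqref{E:5.42a}, fix $|\beta|\in\{1,2,3\}$ and mimic Lemma~\ref{L:5.2}: pair $\partial_x^\beta\eqref{E:5.3}_2$ successively with $\rho\nabla\partial_x^\beta H$ and $\rho\nabla\partial_x^\beta H_t$, using the differentiated Poisson equation $\eqref{E:5.3}_3$ to trade $\nabla^{|\beta|}z$ for $\Delta\nabla^{|\beta|}H$ and the identity \eqref{E:5.11new} for $\dive{\bf w}$. This produces the full dissipation $\|\nabla^{|\beta|}H\|_{H^2(\Omega)}^2+\|\nabla^{|\beta|+1}H_t\|_{H^1(\Omega)}^2$, whose right-hand side consists of $(\delta+\nu)$ times the $(z,{\bf w})$-dissipation at level $|\alpha|=|\beta|+2$ together with lower-order quantities already controlled by $\mathbb{N}^2(0)$ through Lemma~\ref{L:5.6}.

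The crucial step is the closure of the coupling. The $(z,{\bf w})$-estimate at level $|\alpha|=|\beta|+2$ feeds $C_\epsilon\int_0^t\|\nabla^{|\alpha|}H\|_{L^2(\Omega)}^2\,d\tau$ into its right-hand side, while the $H$-estimate at level $|\beta|$ feeds $(\delta+\nu)\int_0^t\|\nabla^{|\alpha|}z,\nabla^{|\alpha|+1}{\bf w}\|_{L^2(\Omega)}^2\,d\tau$ into its own; since $\|\nabla^{|\alpha|}H\|_{L^2(\Omega)}\le\|\nabla^{|\beta|}H\|_{H^2(\Omega)}$, a single large-constant linear combination of the two families (exactly as in the combination $C_7\eqref{E:5.30}+\eqref{E:5.40}+C_8(\cdots)$ of Lemma~\ref{L:5.6}) lets the coupling terms be absorbed by the smallness of $\delta+\nu$. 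Inducting on $k=|\alpha|$ from $2$ to $4$ then closes both \eqref{E:5.42} and \eqref{E:5.42a}. I expect the main obstacle to be the bookkeeping of the top-order commutators produced when $\partial_x^\alpha$ hits the nonlinearities ${\bf h_1},{\bf h_2},N_1,N_2$ at $|\alpha|=4$: each such term must be arranged so that all but one derivative fall on a factor whose $L^\infty$ norm is controlled by \eqref{E:3.1} via Lemma~\ref{de}, ensuring that no derivative of order exceeding the current dissipation level ever appears uncontrolled.
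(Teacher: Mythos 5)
Your proposal follows essentially the same route as the paper: its proof of Lemma \ref{L:5.7} likewise consists of repeating the pairings of Lemmas \ref{L:5.2} and \ref{L:5.4} at each derivative level, namely $\partial_x^{\alpha}(\cdot)\cdot\partial_x^{\alpha}{\bf w}$, $\partial_x^{\beta}(\cdot)\cdot\partial_x^{\beta}\nabla z$ and $\partial_x^{\beta}(\rho\,\cdot)\cdot(\partial_x^{\beta}\nabla H+\bar C_{\beta}\partial_x^{\beta}\nabla H_t)$ with $|\beta|=|\alpha|-1$, then closing by a large-constant combination together with Lemma \ref{L:5.6} and advancing level by level exactly as you describe. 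The only (notational) slip is that your $\nabla z$-pairing should be taken one order lower, i.e.\ $\partial_x^{\beta}$ with $|\beta|=|\alpha|-1$ tested against $\partial_x^{\beta}\nabla z$; as literally written, $\partial_x^{\alpha}(\cdot)\cdot\partial_x^{\alpha}\nabla z$ would produce the unavailable dissipation $\|\nabla^{|\alpha|+1}z\|_{L^2(\Omega)}^2$ at $|\alpha|=4$ rather than the $\|\nabla^{|\alpha|}z\|_{L^2(\Omega)}^2$ you correctly state as the outcome.
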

\begin{proof}
 We take the procedure as $\partial_x^{\alpha}\eqref{E:5.3}_2\cdot \partial_x^{\alpha}{\bf w}$, $\partial_x^{\beta}\eqref{E:5.3}_2$ by $\cdot \partial_x^{\beta}\nabla z$, as well as $\partial_x^{\beta}\left(\rho\eqref{E:5.3}_2\right)\cdot (\partial_x^{\beta}\nabla H+\bar C_{\beta} \cdot \partial_x^{\beta}\nabla H_t)$ for the positive constants $\bar C_{\beta}$ large suitable and then apply Lemma \ref{L:5.6} to hence that \eqref{E:5.42}-\eqref{E:5.42a} holds with the multi-indexs $|\alpha|=2$ and $|\beta|=1$. By analogy with this situation, we can use the similar arguments to prove that \eqref{E:5.42}-\eqref{E:5.42a} holds for the case of multi-indexs $|\alpha|=3$ and $|\beta|=2$ as well as $|\alpha|=4$ and $|\beta|=3$ respectively. Thus, the proof is completed.
\end{proof}

Finally, let $\nu=c_1\mathbb{N}(0)$ with some positive constant $c_1$ large enough. When the initial data satisfies \eqref{zxx} and $\nu_0$ is small, namely, $\mathbb{N}(0)\lesssim \nu_0\ll 1$, it follows from Lemmas \ref{L:5.6} and \ref{L:5.7} that
\begin{align*}
\mathbb{N}(T)=\sup_{t \in(0, T)}\left(\left\|Z, R\right\|_{L^2(\mathbb{R})}+\|z, {\bf w}, H_t\|_{H^4(\Omega)}+\|H\|_{H^5(\Omega)}\right)\leq \frac{\nu}{2}<\nu,
\end{align*}
which closes the {\it a priori} assumption \eqref{ass}. Thus, the global existence of the
solution $(z, {\bf w}, H)(x,t) \in X_{\infty}$ has been obtained.
Moreover, it holds that 
\begin{align}\label{E:5.43g}
\|z, {\bf w}, H, H_t\|_{H^4(\Omega)}\leq \frac{\nu}{2}<\nu, \quad \forall\, t>0.
\end{align}
In addition, it follows from $\eqref{E:5.3}_3$ that $H \in C(0, +\infty; H^6(\Omega))$ and $H_t \in C(0, +\infty; H^5(\Omega))$ with \[\|H\|_{H^6(\Omega)}+\|H_t\|_{H^5(\Omega)}\lesssim \nu.\]

\section{the time-asymptotic stability}\label{s4}
Now, to complete the proof of Theorem \ref{theorem1.1}, we still need to add proofs for \eqref{E:1.15} and \eqref{E:1.17}. From Lemma \ref{de} we decompose $(z, {\bf w}, H, H_t)$ as 
$$
(z, {\bf w}, H, H_t)=\sum_{k=1}^3\left(z^{(k)}, {\bf w}^{(k)}, H^{(k)}, H_t^{(k)}\right),
$$
where each $\left(z^{(k)}, {\bf w}^{(k)}, H^{(k)}, H_t^{(k)}\right)$ satisfies the $k$-dimensional G-N inequalities. Then, it holds that
\begin{align}
	\left\|z^{(1)}, {\bf w}^{(1)}, H^{(1)}, H_t^{(1)}\right\|_{L^{\infty}(\Omega)} & \lesssim\left\|\nabla\left(z^{(1)}, {\bf w}^{(1)}, H^{(1)}, H_t^{(1)}\right)\right\|_{L^2(\Omega)}^{\frac{1}{2}}\left\|z^{(1)}, {\bf w}^{(1)}, H^{(1)}, H_t^{(1)}\right\|_{L^2(\Omega)}^{\frac{1}{2}} \notag\\
	& \lesssim\|\nabla(z, {\bf w}, H, H_t)\|_{L^2(\Omega)}^{\frac{1}{2}}\|z, {\bf w}, H, H_t\|_{L^2(\Omega)}^{\frac{1}{2}}, \label{E:6.1a}\\
	\left\|z^{(2)}, {\bf w}^{(2)}, H^{(2)}, H_t^{(2)}\right\|_{L^{\infty}(\Omega)} & \lesssim\left\|\nabla\left(z^{(2)}, {\bf w}^{(2)}, H^{(2)}, H_t^{(2)}\right)\right\|_{L^2(\Omega)} \lesssim\|\nabla(z, {\bf w}, H, H_t)\|_{L^2(\Omega)},\label{E:6.2c}
\end{align}
and
\begin{align}\label{E:6.3a}
	\left\|z^{(3)}, {\bf w}^{(3)}, H^{(3)}, H_t^{(3)}\right\|_{L^{\infty}(\Omega)} & \lesssim\left\|\nabla^2\left(z^{(3)}, {\bf w}^{(3)}, H^{(3)}, H_t^{(3)}\right)\right\|_{L^2(\Omega)}^{\frac{1}{2}}\left\|z^{(3)}, {\bf w}^{(3)}, H^{(3)}, H_t^{(3)}\right\|_{L^6(\Omega)}^{\frac{1}{2}} 
	\notag\\
	&\lesssim\left\|\nabla^2\left(z^{(3)}, {\bf w}^{(3)}, H^{(3)}, H_t^{(3)}\right)\right\|_{L^2(\Omega)}^{\frac{1}{2}}\left\|\nabla\left(z^{(3)}, {\bf w}^{(3)}, H^{(3)}, H_t^{(3)}\right)\right\|_{L^2(\Omega)}^{\frac{1}{2}} 
\notag\\
& \lesssim\left\|\nabla^2(z, {\bf w}, H, H_t)\right\|_{L^2(\Omega)}^{\frac{1}{2}}\|\nabla(z, {\bf w}, H, H_t)\|_{L^2(\Omega)}^{\frac{1}{2}}.
\end{align}
Then from \eqref{E:6.1a}-\eqref{E:6.3a}, one has that
\begin{align}\label{E:6.4a}
	\|(z, {\bf w}, H, H_t)(t)\|_{L^{\infty}\left(\mathbb{R}^3\right)}=\|(z, {\bf w}, H, H_t)(t)\|_{L^{\infty}(\Omega)} \lesssim\|\nabla(z, {\bf w}, H, H_t)(t)\|_{L^2(\Omega)}^{\frac{1}{2}}.
\end{align}
Since $\|\nabla(z, {\bf w}, H, H_t)(t)\|_{L^2(\Omega)}^2 \in W^{1,1}((0,+\infty))$, one can get that
$$
\|\nabla(z, {\bf w}, H, H_t)(t)\|_{L^2(\Omega)}^2 \rightarrow 0 \quad \text { as } t \rightarrow+\infty,
$$
which yields that 
\[\|(z, {\bf w}, H, H_t)(t)\|_{L^{\infty}\left(\mathbb{R}^3\right)}  \rightarrow 0 \quad \mbox{as} \quad t \rightarrow+\infty.\]
Similarly, since $\left\|\nabla^2(z, {\bf w}, H, H_t)(t)\right\|_{L^2(\Omega)}^2$ and $\left\|\nabla^3(z, {\bf w}, H, H_t)(t)\right\|_{L^2(\Omega)}^2$ belong to the $W^{1,1}((0,+\infty))$ space respectively, one can also get that
\begin{align*}
\|\nabla(z, {\bf w}, H, H_t)(t)\|_{L^{\infty}\left(\mathbb{R}^3\right)} \lesssim\left\|\nabla^2(z, {\bf w}, H, H_t)(t)\right\|_{L^2(\Omega)}^{\frac12} \rightarrow 0 \quad \mbox{as} \quad t \rightarrow+\infty,\notag\\
\|\nabla^2(z, {\bf w}, H, H_t)(t)\|_{L^{\infty}\left(\mathbb{R}^3\right)} \lesssim\left\|\nabla^3(z, {\bf w}, H, H_t)(t)\right\|_{L^2(\Omega)}^{\frac12} \rightarrow 0 \quad \mbox{as} \quad t \rightarrow+\infty.
\end{align*} 
Furthermore, it follows from $\eqref{E:5.3}_3$ and the fact $\|\Delta H, \Delta H_t\|_{L^{2}\left(\Omega\right)}\gtrsim c_0 \|\nabla^2H, \nabla^2H_t\|_{L^{2}\left(\Omega\right)}$ that
\[\|\nabla^2 H\|_{H^{4}\left(\Omega\right)}+\|\nabla^2 H_t\|_{H^{3}\left(\Omega\right)}\lesssim \|z, {\bf w}, H\|_{H^{4}\left(\Omega\right)}+\|H_t\|_{H^{3}\left(\Omega\right)},\]
which implies that 
\begin{align*}
\|\nabla^2H(t)\|_{W^{2,\infty}\left(\mathbb{R}^3\right)}+\|\nabla^2H_t(t)\|_{W^{1,\infty}\left(\mathbb{R}^3\right)} 
\rightarrow 0 \quad \mbox{as} \quad t \rightarrow+\infty.
\end{align*} 
Note that ${\bf m}-\tilde{{\bf m}}=\rho{\bf w}+z \tilde{{\bf u}}$, thus we obtain \eqref{E:1.15}. 

Then, it remains to show the exponential decay rate of $\left(z^{\neq}, {\bf w}^{\neq}, H^{\neq}, H^{\neq}_t\right)$.  It follows from Sobolev inequality,  Lemma \ref{l1} and \eqref{E:5.43g} that
\begin{align}\label{qo1}
\sup _{t\geq0}\left\|\bar z, \bar{\bf w}, \bar H\right\|_{W^{3,+\infty}(\mathbb R)} \lesssim \sup _{t\geq0}\left\|\bar z, \bar{\bf w}, \bar H\right\|_{H^4(\mathbb{R})}\lesssim \sup _{t\geq0}\left\|z, {\bf w}, H\right\|_{H^4(\mathbb{R})} \lesssim \nu
\end{align}
and 
\begin{align}\label{qo3}
	\|\bar z_t, \bar{\bf w}_t, \bar H_t\|_{W^{1,\infty}(\mathbb{R})}\lesssim\|\bar z_t, \bar{\bf w}_t, \bar H_t\|_{H^{2}(\mathbb{R})}\lesssim \|z_t, {\bf w}_t, H_t\|_{H^{2}(\Omega)}\lesssim \|z, {\bf w}, H_t\|_{H^{4}(\Omega)}\lesssim \nu.
\end{align}
Then it follows from Lemma \ref{de} that
\begin{align}\label{qo2}
\sup _{t\geq0}\left\|z^{\neq}, {\bf w}^{m d}, H^{\neq}, H^{\neq}_t\right\|_{W^{2,+\infty}(\Omega)} \lesssim \sup _{t\geq0}\left\|z^{\neq}, {\bf w}^{\neq}, H^{\neq}, H^{\neq}_t\right\|_{H^4(\Omega)} \lesssim \nu .
\end{align}
In addition, from Lemma \ref{l1} we have
\begin{align}
	\| z^{\neq}, {\bf w}^{\neq}, H^{\neq}, H^{\neq}_t\|_{L^2(\Omega)}^2\lesssim\|\nabla z^{\neq}, \nabla {\bf w}^{\neq}, \nabla H^{\neq}, \nabla H^{\neq}_t\|_{L^2(\Omega)}^2. \label{E:6.a}
\end{align}
Note that the zero mode of any function is independent of the transverse variables, $x^{\prime}=\left(x_2, x_3\right)$. Then from \eqref{E:5.3} we have
\begin{align}\label{E:6.1}
	\begin{cases}
		\partial_t z^{\neq}+\dive \left(\tilde \rho {\bf w}^{\neq}+{\bf J}_1\right)=0,\\
		\partial_t{\bf w}^{\neq}+\left(\frac{T}{\rho}\nabla z\right)^{\neq}-\left(\frac{1}{\rho}\Delta {\bf w}\right)^{\neq}+\nabla H^{\neq}=-{\bf h_1}^{\neq}-{\bf h_2}^{\neq},\\
		-\Delta H^{\neq}=z^{\neq}-e^{\tilde E}H^{\neq}-N_2^{\neq},
	\end{cases}
\end{align}
where 
\begin{align*}
&{\bf J}_1=\left(z^{\neq}{\bf w}^{\neq}\right)^{\neq}+\bar z{\bf w}^{\neq}+\bar {\bf w}z^{\neq}+\tilde {\bf u}z^{\neq}, \notag\\
	&{\bf h}_1^{\neq}=\tilde {\bf u}\cdot \nabla {\bf w}^{\neq}+\left({\bf w}^{\neq}\cdot \nabla {\bf w}^{\neq}\right)^{\neq}+{\bf w}^{\neq}(\nabla \bar{\bf w})+\bar {\bf w}\nabla {\bf w}^{\neq},\notag\\
	&{\bf h}_2^{\neq}=\frac{\Delta \tilde {\bf u}-T\nabla \tilde \rho}{\tilde \rho}\left(\frac{z}{\rho}\right)^{\neq}+\nabla \tilde {\bf u}\cdot {\bf w}^{\neq},\notag\\
&N_2^{\neq}=e^{\tilde E}(e^H-1-H)^{\neq}.
\end{align*}

Before we derive the decay rates of $\left(z^{\neq}, {\bf w}^{\neq}, H^{\neq}, H^{\neq}_t\right)$, we first establish some estimates of the nonlinear terms in \eqref{E:6.1}.
\begin{lemma}\label{L:6.1}
	Under the assumptions of Theorem \ref{theorem1.1}, it holds that
	\begin{align}
		&\|	{\bf J}_1\|_{H^1(\Omega)}\lesssim (\delta+\nu)\| \nabla z^{\neq}, \nabla {\bf w}^{\neq}\|_{L^2(\Omega)},\label{E:6.2b}\\
		&\|{\bf h}_1^{\neq}\|_{L^2(\Omega)}\lesssim (\delta+\nu)\| \nabla {\bf w}^{\neq}\|_{L^2(\Omega)},\quad \|{\bf h}_1^{\neq}\|_{H^1(\Omega)}\lesssim (\delta+\nu)\| \nabla {\bf w}^{\neq}, \nabla^2 {\bf w}^{\neq}\|_{L^2(\Omega)},\label{E:6.3b}\\
&\|{\bf h}_2^{\neq}\|_{H^1(\Omega)}\lesssim \delta	\left\|\nabla z^{\neq}, \nabla {\bf w}^{\neq}\right\|_{L^2(\Omega)},\quad \left\|{\bf h}_2^{\neq}\right\|_{H^2(\Omega)}\lesssim \delta \left\|\nabla z^{\neq}, \nabla {\bf w}^{\neq}\right\|_{H^1(\Omega)},\label{E:6.2a}\\
&\|N_{2}^{\neq}\|_{H^1(\Omega)}\lesssim (\delta+\nu)	\left\|\nabla H^{\neq}\right\|_{L^2(\Omega)},\quad \|N_{2}^{\neq}\|_{H^2(\Omega)}\lesssim (\delta+\nu)	\left\|\nabla H^{\neq}\right\|_{H^1(\Omega)},\label{E:6.6a}\\
	&\|	N_{2t}^{\neq}\|_{L^2(\Omega)}\lesssim (\delta+\nu)	\left\|\nabla H^{\neq}, \nabla H_t^{\neq}\right\|_{L^2(\Omega)}.\label{E:6.6}
	\end{align}
\end{lemma}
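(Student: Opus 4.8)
The plan is to reduce every estimate to the schematic form ``small coefficient times a non-zero mode'' and then absorb the non-zero mode into a gradient via the Poincar\'e-type inequality $\|f^{\neq}\|_{L^p(\Omega)}\lesssim\|\nabla f^{\neq}\|_{L^p(\Omega)}$ of Lemma \ref{l1}. Two elementary facts drive this. First, since the zero mode depends only on $x_1$, multiplication by any $x_1$-function $a=a(x_1)$ commutes with the projection, $(af)^{\neq}=af^{\neq}$; together with the boundedness $\|g^{\neq}\|_{L^p(\Omega)}\lesssim\|g\|_{L^p(\Omega)}$ from Lemma \ref{l1}, this lets me peel off the shock-profile and zero-mode coefficients, which are $O(\delta)$ (for $\tilde{\bf u}$ and the derivatives of $\tilde\rho,\tilde{\bf u},\tilde E$, by \eqref{cs} and Lemma \ref{nl}) or $O(\nu)$ (for $\bar z,\bar{\bf w},\bar H$ and $z^{\neq},{\bf w}^{\neq},H^{\neq}$, by \eqref{qo1}--\eqref{qo3}). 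Second, for a nonlinear composition $F(u)$ with $u=\bar u+u^{\neq}$ I will use the mean-value identity $F(u)=F(\bar u)+\big(\int_0^1 F'(\bar u+\theta u^{\neq})\,d\theta\big)u^{\neq}$; since $F(\bar u)$ is an $x_1$-function and hence $(F(\bar u))^{\neq}=0$, this gives $(F(u))^{\neq}=\big[\big(\int_0^1 F'\,d\theta\big)u^{\neq}\big]^{\neq}$, whence $\|(F(u))^{\neq}\|_{L^2(\Omega)}\lesssim\big\|\int_0^1 F'(\bar u+\theta u^{\neq})\,d\theta\big\|_{L^\infty(\Omega)}\|u^{\neq}\|_{L^2(\Omega)}$.

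With these in hand, I would dispatch \eqref{E:6.2b}--\eqref{E:6.3b} first. Every summand of ${\bf J}_1$ and ${\bf h}_1^{\neq}$ is a product of a coefficient ($\tilde{\bf u}$, $\bar z$, $\bar{\bf w}$, $\nabla\bar{\bf w}$, or itself a non-zero mode) with a non-zero mode of $z$ or ${\bf w}$, and the genuinely quadratic pieces $(z^{\neq}{\bf w}^{\neq})^{\neq}$ and $({\bf w}^{\neq}\cdot\nabla{\bf w}^{\neq})^{\neq}$ are handled by first extracting $\|z^{\neq}\|_{L^\infty},\|{\bf w}^{\neq}\|_{L^\infty}\lesssim\nu$ (from \eqref{qo2}) and then invoking the $L^p$-boundedness of the projection. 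Each term is thus a coefficient of size $\lesssim\delta+\nu$ times a non-zero mode, and Poincar\'e converts the surviving $\|z^{\neq}\|,\|{\bf w}^{\neq}\|$ into $\|\nabla z^{\neq}\|,\|\nabla{\bf w}^{\neq}\|$. The $H^1$ versions follow by differentiating once and repeating, the extra derivative landing on the non-zero modes and producing the $\nabla^2{\bf w}^{\neq}$ term in \eqref{E:6.3b}.

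For \eqref{E:6.2a} I would note that the prefactor $(\Delta\tilde{\bf u}-T\nabla\tilde\rho)/\tilde\rho$, as well as $\nabla\tilde{\bf u}$, is $O(\delta)$ by Lemma \ref{nl}, and then apply the composition identity to $F(z)=z/(\tilde\rho+z)$, whose $F'$ is bounded on the small range of $z$, to get $\|(z/\rho)^{\neq}\|\lesssim\|z^{\neq}\|\lesssim\|\nabla z^{\neq}\|$; the factor $\delta$ then yields the $H^1$ and, after one more derivative, the $H^2$ bound. For \eqref{E:6.6a}--\eqref{E:6.6} the relevant composition is $G(H)=e^H-1-H$ with $G'(0)=0$, so $\int_0^1 G'(\bar H+\theta H^{\neq})\,d\theta=\int_0^1(e^{\bar H+\theta H^{\neq}}-1)\,d\theta$ is itself $O(\nu)$ by \eqref{qo1}--\eqref{qo2}; since $e^{\tilde E}$ is a bounded $x_1$-function, this gives $\|N_2^{\neq}\|\lesssim\nu\|H^{\neq}\|\lesssim\nu\|\nabla H^{\neq}\|$, with the higher norms again following by differentiation. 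For $N_{2t}^{\neq}$ I would split $N_{2t}=(e^{\tilde E})_t(e^H-1-H)+e^{\tilde E}(e^H-1)H_t$, treat the first term as above with the extra $O(\delta)$ factor $(e^{\tilde E})_t$, and for the second expand $HH_t=(\bar H+H^{\neq})(\bar H_t+H_t^{\neq})$, so that its non-zero mode $\bar H H_t^{\neq}+\bar H_t H^{\neq}+(H^{\neq}H_t^{\neq})^{\neq}$ is controlled, using $\|\bar H,\bar H_t\|_{L^\infty}\lesssim\nu$ and $\|H^{\neq}\|_{L^\infty}\lesssim\nu$, by $\nu(\|\nabla H^{\neq}\|+\|\nabla H_t^{\neq}\|)$ after Poincar\'e.

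The hard part is precisely the two nonlinear compositions $(z/\rho)^{\neq}$ and $(e^H-1-H)^{\neq}$: because $(\cdot)^{\neq}$ does not distribute over nonlinear functions, one cannot naively replace $z$ or $H$ by their non-zero modes, and a careless estimate would only bound these by $\|z\|,\|H\|$---quantities that do not decay---rather than by the non-zero-mode gradients. The mean-value decomposition around the zero mode is what resolves this, isolating an $x_1$-function that the projection annihilates and leaving a remainder that factors as a bounded coefficient times $u^{\neq}$, after which Lemma \ref{l1} applies. Everything else is routine product-rule bookkeeping using the uniform bounds \eqref{E:3.1} and \eqref{qo1}--\eqref{qo3}, which I would omit.
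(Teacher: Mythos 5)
Your proposal is correct, and for the two genuinely delicate terms it takes a different route from the paper. For $\left(\frac{z}{\rho}\right)^{\neq}$ the paper does not Taylor-expand: it writes $z^{\neq}=\left(\frac{z}{\rho}\rho\right)^{\neq}$, expands the product into zero/non-zero modes, isolates $\tilde\rho\left(\frac{z}{\rho}\right)^{\neq}$ on one side, and absorbs the remaining $O(\nu)$ terms that still contain $\left(\frac{z}{\rho}\right)^{\neq}$; for $\left(e^H-1-H\right)^{\neq}$ it first applies the Poincar\'e inequality of Lemma \ref{l1} to pass to $\nabla\left(e^H-1-H\right)=(e^H-1)\nabla H$, exploits the extra smallness of $e^H-1$, and again closes by absorption of a self-referential term. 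Your mean-value expansion $F(u)=F(\bar u)+\bigl(\int_0^1F'(\bar u+\theta u^{\neq})\,d\theta\bigr)u^{\neq}$ combined with $(F(\bar u))^{\neq}=0$ is a single uniform device that replaces both tricks, avoids any absorption step, and directly produces the $O(\nu)$ coefficient for $N_2^{\neq}$ from $G'(0)=0$; the price is that for the $H^1$ and $H^2$ versions you must differentiate the integral remainder (including the explicit $x_1$-dependence of $F$ through $\tilde\rho$) and re-decompose the resulting products, whereas the paper's Poincar\'e argument for $N_2^{\neq}$ hands you the gradient estimate essentially for free. Two cosmetic points: in the $N_{2t}^{\neq}$ step the product you should expand is $(e^H-1)H_t$ rather than $HH_t$ (the argument is unchanged since $(e^H-1)^{\neq}$ is controlled by your composition identity and $\overline{e^H-1}$ is $O(\nu)$), and note that your bound for $\left(\frac{z}{\rho}\right)^{\neq}$ carries an $O(1)$ rather than $O(\delta+\nu)$ constant, which is exactly what \eqref{E:6.2a} needs since the $O(\delta)$ smallness there comes from the shock-profile prefactor.
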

\begin{proof}
By the direct computations and using \eqref{qo1} and \eqref{qo2} we have \eqref{E:6.2b}-\eqref{E:6.3b}. To estimate the term ${\bf h}_2^{\neq}$, we rewrite $z^{\neq}$ as
\begin{align}\label{E:6.3}
	z^{\neq}=\left(\frac{z}{\rho}\rho\right)^{\neq}= \left[\left(\frac{z}{\rho}\right)^{\neq}z^{\neq}\right]^{\neq}+	\bar z\left(\frac{z}{\rho}\right)^{\neq}+\tilde \rho \left(\frac{z}{\rho}\right)^{\neq}+\overline{\left(\frac{z}{\rho}\right)}z^{\neq}.
\end{align}
Using the fact that $|\bar z|_{L^{\infty}}+\Big|\overline{\left(\frac{z}{\rho}\right)}\Big|_{L^{\infty}}+\Big|\left(\frac{z}{\rho}\right)^{\neq}\Big|_{L^{\infty}}\lesssim |z|_{L^{\infty}}+\Big|\frac{z}{\rho}\Big|_{L^{\infty}}\lesssim \nu$, we get
\begin{align}
	\left\|\left(\frac{z}{\rho}\right)^{\neq}\right\|_{L^2(\Omega)}\lesssim& \left\|\left[\left(\frac{z}{\rho}\right)^{\neq}z^{\neq}\right]^{\neq}\right\|_{L^2(\Omega)}+	\left\|z^{\neq}\right\|_{L^2(\Omega)}+	\left\|\overline{\left(\frac{z}{\rho}\right)}z^{\neq}\right\|_{L^2(\Omega)}
	\notag\\
	\lesssim& \left\|z^{\neq}\right\|_{L^2(\Omega)}\lesssim \left\|\nabla z^{\neq}\right\|_{L^2(\Omega)}.\nonumber
\end{align}
Furthermore, taking the \eqref{E:6.3} the gradient $\nabla$ and $\nabla^2$ respectively, we use the similar argument to get
\begin{align}
\left\|\left(\frac{z}{\rho}\right)^{\neq}\right\|_{H^1(\Omega)}\lesssim 	\left\|\nabla z^{\neq}\right\|_{L^2(\Omega)}
,\quad \left\|\nabla^2\left(\frac{z}{\rho}\right)^{\neq}\right\|_{L^2(\Omega)}\lesssim 	\left\|\nabla z^{\neq}, \nabla^2 z^{\neq}\right\|_{L^2(\Omega)},\label{E:6.2}
\end{align}
which implies \eqref{E:6.2a}.

To estimate the term $N_2^{\neq}$,  we use Poinca{\'r}e inequality, \eqref{qo1} and \eqref{qo2} to get
\begin{align}\label{E:6.7}
	&\|\left(e^H-1-H\right)^{\neq}\|_{L^2(\Omega)}
	\lesssim 	\|\nabla \left(e^H-1-H\right)^{\neq}\|_{L^2(\Omega)}= \| \left((e^H-1)\nabla H\right)^{\neq}\|_{L^2(\Omega)}
	\notag\\
	=&\| \left((e^H-1)^{\neq}\nabla H^{\neq}\right)^{\neq}+\overline{(e^H-1)}\nabla H^{\neq}+(\nabla \bar H)(e^H-1)^{\neq}\|_{L^2(\Omega)}
	\notag\\
	\lesssim&\left(\|\nabla H^{\neq}\|_{L^{\infty}(\Omega)}+\|\nabla \bar H\|_{L^{\infty}(\Omega)}\right)\|\left(e^H-1\right)^{\neq}\|_{L^2(\Omega)}+\|\overline{(e^H-1)}\|_{L^{\infty}(\Omega)}\|\nabla H^{\neq}\|_{L^2(\Omega)}
	\notag\\
	\lesssim&\nu\|\left(e^H-1\right)^{\neq}\|_{L^2(\Omega)}+\nu \|\nabla H^{\neq}\|_{L^2(\Omega)}
	\notag\\
	\lesssim&\nu\|\left(e^H-1-H\right)^{\neq}\|_{L^2(\Omega)}+\nu \|\nabla H^{\neq}\|_{L^2(\Omega)},
\end{align}
which yields that 
\begin{align}\label{E:6.8}
\|\left(e^H-1-H\right)^{\neq}\|_{L^2(\Omega)}\lesssim \nu \|\nabla H^{\neq}\|_{L^2(\Omega)}.
\end{align}
Moreover, it follows from \eqref{E:6.7}-\eqref{E:6.8} that 
\[\|\nabla\left(e^H-1-H\right)^{\neq}\|_{L^2(\Omega)}\lesssim \nu \|\nabla H^{\neq}\|_{L^2(\Omega)},\]
which implies the former of \eqref{E:6.6a}. And it is easy to check that the latter of \eqref{E:6.6a} holds by 
differentiaing $N_2^{\neq}$ twice. Finally, note that
\begin{align*}
N_{2t}^{\neq}=&e^{\tilde E}\left[\left((e^H-1)^{\neq}H_t^{\neq}\right)^{\neq}+(e^H-1)^{\neq}\bar H_t+\overline{(e^H-1)}H_t^{\neq}\right]
+\left(e^{\tilde E}\right)_t(e^H-1-H)^{\neq},
\end{align*}
we use \eqref{qo3} and \eqref{E:6.8} to get \eqref{E:6.6}.
Thus, the proof is completed.
\end{proof}
\subsection{the estimates of the non-zero mode of electric fields}
At the begining of this subsection, we first give the following lemma.
\begin{lemma}\label{L:6.2}
Under the assumptions of Theorem \ref{theorem1.1}, it holds that
	\begin{align}\label{E:6.9b}
	\|H_{tt}\|_{H^3(\Omega)}\lesssim \nu.
	\end{align}
\end{lemma}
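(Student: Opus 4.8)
The plan is to obtain $H_{tt}$ as the solution of an elliptic equation deduced from the Poisson equation $\eqref{E:5.3}_3$, and then to bound its right-hand side by $\nu$ with the help of the global bounds $\|z,{\bf w},H,H_t\|_{H^4(\Omega)}\lesssim\nu$ from \eqref{E:5.43g} together with $\|H\|_{H^6(\Omega)}+\|H_t\|_{H^5(\Omega)}\lesssim\nu$. Differentiating $-\Delta H=z-e^{\tilde E}H-N_2$ twice in $t$ and moving to the left the term $e^{\tilde E}H_{tt}$ produced by $(e^{\tilde E}H)_{tt}$, I would write
\[
(-\Delta+e^{\tilde E})H_{tt}=z_{tt}-(e^{\tilde E})_{tt}H-2(e^{\tilde E})_tH_t-N_{2tt}.
\]
Because $e^{\tilde E}$ depends only on $x_1$, is bounded below by a positive constant, and has arbitrarily small derivatives in every $W^{k,\infty}$ by Lemma \ref{nl}, the operator $-\Delta+e^{\tilde E}$ is uniformly elliptic and coercive on $\Omega$; hence, using the relation $\|\nabla^2u\|_{L^2(\Omega)}\lesssim\|\Delta u\|_{L^2(\Omega)}$ used repeatedly above, the standard elliptic estimate gives
\[
\|H_{tt}\|_{H^3(\Omega)}\lesssim\|z_{tt}\|_{H^1(\Omega)}+\|(e^{\tilde E})_{tt}H\|_{H^1(\Omega)}+\|(e^{\tilde E})_tH_t\|_{H^1(\Omega)}+\|N_{2tt}\|_{H^1(\Omega)}.
\]

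The principal obstacle is the estimate $\|z_{tt}\|_{H^1(\Omega)}\lesssim\nu$, since $z_{tt}$ is a second-order time derivative that does not appear in the energy functional and must be re-expressed through the equations in terms of spatial derivatives only. Using the continuity equation $\eqref{E:5.3}_1$ twice, I would write $z_{tt}=-\dive\,\partial_t(\rho{\bf w}+z\tilde{\bf u})$, so that $\|z_{tt}\|_{H^1(\Omega)}\lesssim\|\partial_t(\rho{\bf w}+z\tilde{\bf u})\|_{H^2(\Omega)}$. Expanding $\partial_t(\rho{\bf w}+z\tilde{\bf u})=\rho_t{\bf w}+\rho{\bf w}_t+z_t\tilde{\bf u}+z\tilde{\bf u}_t$, the only genuinely new quantities are $z_t$ and ${\bf w}_t$: the bound $\|z_t\|_{H^2(\Omega)}\lesssim\nu$ follows at once from $z_t=-\dive(\rho{\bf w}+z\tilde{\bf u})$ and \eqref{E:5.43g}, while for ${\bf w}_t$ I would solve the momentum equation $\eqref{E:5.3}_2$ for
\[
{\bf w}_t=\tfrac1\rho\Delta{\bf w}-\tfrac{T}{\rho}\nabla z-\nabla H-{\bf h}_1-{\bf h}_2
\]
and estimate each term in $H^2(\Omega)$ by $\|{\bf w}\|_{H^4(\Omega)}$, $\|z\|_{H^4(\Omega)}$ and $\|H\|_{H^3(\Omega)}$, all $\lesssim\nu$, the shock-profile coefficients being bounded (with smallness) by Lemma \ref{nl}; this yields $\|{\bf w}_t\|_{H^2(\Omega)}\lesssim\nu$. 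Since $\tilde\rho_t$ and $\tilde{\bf u}_t$ are bounded in every $W^{k,\infty}$ (again Lemma \ref{nl}), each summand of $\partial_t(\rho{\bf w}+z\tilde{\bf u})$ is then a product of an $H^2(\Omega)$-factor of size $\lesssim\nu$ with a bounded one, whence $\|z_{tt}\|_{H^1(\Omega)}\lesssim\nu$.

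For the remaining terms, $(e^{\tilde E})_t$ and $(e^{\tilde E})_{tt}$ are bounded and small by Lemma \ref{nl}, so $\|(e^{\tilde E})_{tt}H\|_{H^1(\Omega)}+\|(e^{\tilde E})_tH_t\|_{H^1(\Omega)}\lesssim\|H,H_t\|_{H^1(\Omega)}\lesssim\nu$. For $N_{2tt}$, I would expand, using $N_2=e^{\tilde E}(e^H-1-H)$,
\[
N_{2tt}=(e^{\tilde E})_{tt}(e^H-1-H)+2(e^{\tilde E})_t(e^H-1)H_t+e^{\tilde E}e^HH_t^2+e^{\tilde E}(e^H-1)H_{tt}.
\]
Every summand except the last is a product in which at least one factor carries the smallness $\|H\|_{W^{1,\infty}(\Omega)}+\|H_t\|_{W^{1,\infty}(\Omega)}\lesssim\nu$ (cf. \eqref{E:3.1}), and hence contributes $\lesssim\nu$ in $H^1(\Omega)$; the last term obeys $\|e^{\tilde E}(e^H-1)H_{tt}\|_{H^1(\Omega)}\lesssim\|e^H-1\|_{W^{1,\infty}(\Omega)}\|H_{tt}\|_{H^1(\Omega)}\lesssim\nu\,\|H_{tt}\|_{H^3(\Omega)}$. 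Inserting these bounds into the elliptic estimate yields $\|H_{tt}\|_{H^3(\Omega)}\lesssim\nu+\nu\,\|H_{tt}\|_{H^3(\Omega)}$, and absorbing the last term for $\nu$ small gives \eqref{E:6.9b}.
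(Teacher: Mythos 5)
Your proposal is correct and follows essentially the same route as the paper: differentiate the Poisson equation twice in $t$, bound $z_{tt}$ in $H^1(\Omega)$ through the continuity and momentum equations, and use the ellipticity of $-\Delta$ together with the positive zeroth-order coefficient to climb to $H^3$. The only cosmetic difference is that the paper moves the full term $e^{E}H_{tt}=e^{\tilde E}e^{H}H_{tt}$ to the left-hand side (so no absorption is needed) and then bootstraps $H^1\to\nabla^2\to\nabla^3$, whereas you keep $e^{\tilde E}(e^{H}-1)H_{tt}$ on the right and absorb it for small $\nu$ in a one-shot elliptic estimate; both are valid.
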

\begin{proof}
Differentiating $\eqref{E:5.3}_3$ with respect to $t$ twice gives
\begin{align}\label{E:6.10b}
	-\Delta H_{tt}=z_{tt}-(e^{\tilde E}H)_{tt}-N_{2tt}.
\end{align}
Then multiplying \eqref{E:6.10b} by $H_{tt}$ as well as integrating the result over $\Omega$ gives that 
	\begin{align*}
	\|\nabla H_{tt}\|_{L^2(\Omega)}^2+\int_{\Omega}e^EH_{tt}^2dx=\int_{\Omega}z_{tt}H_{tt}dx-\int_{\Omega}\left((e^{\tilde E})_{tt}(e^H-1)+2(e^{\tilde E})_{t}e^HH_t+e^EH_t^2\right)H_{tt}dx.
	\end{align*}
	From $\eqref{E:5.3}_1$ and $\eqref{E:5.3}_2$ we get
		\begin{align}\label{bu5}
	\|z_{tt}\|_{H^1(\Omega)}^2\lesssim \|z, H\|_{H^3(\Omega)}^2+\|{\bf w}\|_{H^4(\Omega)}^2\lesssim \nu^2,
	\end{align}
together with \eqref{E:5.43g}, which yields that 
	\begin{align}\label{E:6.12b}
	\|H_{tt}\|_{H^1(\Omega)}\lesssim \nu.
\end{align}
Furthermore, substituting \eqref{bu5}-\eqref{E:6.12b} into \eqref{E:6.10b} leads to 
	\begin{align}
	\|\nabla^2 H_{tt}\|_{L^2(\Omega)}\lesssim \nu,\nonumber
\end{align}
where we have used the fact $\|\Delta H_{tt}\|_{L^2(\Omega)}\geq c_0\|\nabla^2 H_{tt}\|_{L^2(\Omega)}$.
Finally, we take the gradient $\nabla \eqref{E:6.10b}$ and use the \eqref{bu5}-\eqref{E:6.12b} to get
\[\|\nabla^3 H_{tt}\|_{L^2(\Omega)}\lesssim \nu.\]
 Thus, the proof is completed.
	\end{proof}
\begin{lemma}\label{L:6.3}
Under the assumptions of Theorem \ref{theorem1.1}, it holds that
	\begin{align}\label{E:6.27}
		&\frac{d}{dt}(E_1(t)+C_9E_2(t))+\|\nabla H^{\neq}, \nabla H_t^{\neq}\|_{H^1(\Omega)}^2
		\lesssim(\delta+\nu)\| \nabla z^{\neq}, \nabla {\bf w}^{\neq}, \nabla^2{\bf w}^{\neq}\|_{L^2(\Omega)}^2,
	\end{align}
where $E_1(t)$ and $E_2(t)$ are given as in \eqref{E:6.35c} and \eqref{E:6.35d} respectively.
\end{lemma}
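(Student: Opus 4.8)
The plan is to adapt the electric-field estimate of Lemma \ref{L:5.2} (the one leading to \eqref{E:5.22}) to the non-zero mode system \eqref{E:6.1}, the decisive new features being the coercive term $e^{\tilde E}H^{\neq}$ in the Poisson equation $\eqref{E:6.1}_3$, the Poincar\'e-type inequality \eqref{E:6.a}, and the nonlinear bounds collected in Lemma \ref{L:6.1}. I take $E_1(t)$ to be the energy density produced by testing $\eqref{E:6.1}_2$ against $\rho\nabla H^{\neq}$ and $E_2(t)$ the one produced by testing against $\rho\nabla H_t^{\neq}$.

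\textbf{Step 1 (the $E_1$ functional).} First I multiply $\eqref{E:6.1}_2$ by $\rho\nabla H^{\neq}$ and integrate over $\Omega$. The time term $\rho\partial_t{\bf w}^{\neq}\cdot\nabla H^{\neq}$ is rewritten, exactly as in \eqref{E:5.5b}, using the continuity equation $\eqref{E:6.1}_1$ and the Poisson equation $\eqref{E:6.1}_3$, as $\frac{d}{dt}(\cdots)+\dive(\cdots)$ plus lower-order remainders; the extra term $e^{\tilde E}H^{\neq}$ in $\eqref{E:6.1}_3$ now furnishes an additional coercive contribution $e^{\tilde E}(H^{\neq})^2$. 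Through $z^{\neq}=-\Delta H^{\neq}+e^{\tilde E}H^{\neq}+N_2^{\neq}$ (see \eqref{nb1}), the pressure term is integrated by parts as in \eqref{O:5.8} to yield $\|\nabla H^{\neq},\Delta H^{\neq}\|_{L^2(\Omega)}^2$, while the viscous term $-(\tfrac{1}{\rho}\Delta{\bf w})^{\neq}\cdot\nabla H^{\neq}$ is handled by the Hodge splitting $\Delta{\bf w}=\nabla\dive{\bf w}-\nabla\times\mathrm{curl}\,{\bf w}$ together with the $(\cdot)^{\neq}$-analogue of \eqref{E:5.11new}, exactly as in \eqref{gw1}, converting $\dive{\bf w}^{\neq}$ into $\Delta H_t^{\neq}$ and producing $\frac{d}{dt}E_1(t)$. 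Bounding the nonlinearities ${\bf J}_1,{\bf h}_1^{\neq},{\bf h}_2^{\neq},N_2^{\neq}$ through Lemma \ref{L:6.1}, this step gives
\[\frac{d}{dt}E_1(t)+\|\nabla H^{\neq},\Delta H^{\neq}\|_{L^2(\Omega)}^2\lesssim\|H_t^{\neq},\nabla H_t^{\neq}\|_{L^2(\Omega)}^2+(\delta+\nu)\|\nabla z^{\neq},\nabla{\bf w}^{\neq},\nabla^2{\bf w}^{\neq}\|_{L^2(\Omega)}^2.\]

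\textbf{Step 2 (the $E_2$ functional).} Next I multiply $\eqref{E:6.1}_2$ by $\rho\nabla H_t^{\neq}$ and repeat the scheme of \eqref{E:5.12}--\eqref{E:5.28new}. The time term is rearranged as in \eqref{E:5.13}, where the second time derivative $(-\Delta H^{\neq}+e^{\tilde E}H^{\neq}+N_2^{\neq})_{tt}$ appears; the associated $H_{tt}$-remainders are controlled by Lemma \ref{L:6.2}, and the $Q_{2,2}$-type density $\tfrac12(\nabla H_t^{\neq})^2+\tfrac{e^{\tilde E}}{2}(H_t^{\neq})^2$ is absorbed into $E_2(t)$. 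Using $\|\Delta H_t^{\neq}\|_{L^2(\Omega)}\geq c_0\|\nabla^2 H_t^{\neq}\|_{L^2(\Omega)}$ and the bound \eqref{E:6.6} for $N_{2t}^{\neq}$, I obtain the full dissipation
\[\frac{d}{dt}E_2(t)+\|\nabla H_t^{\neq}\|_{H^1(\Omega)}^2\lesssim(\delta+\nu)\|\nabla z^{\neq},\nabla{\bf w}^{\neq},\nabla^2{\bf w}^{\neq}\|_{L^2(\Omega)}^2,\]
where $E_2(t)$ is coercive for $\|\nabla H^{\neq}\|_{L^2(\Omega)}^2+\|H_t^{\neq},\nabla H_t^{\neq}\|_{L^2(\Omega)}^2$.

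\textbf{Step 3 (combination) and the main obstacle.} Finally I form $E_1(t)+C_9E_2(t)$ with $C_9$ large, so that the dissipation $\|\nabla H_t^{\neq}\|_{H^1(\Omega)}^2$ from Step 2 absorbs the $\|H_t^{\neq},\nabla H_t^{\neq}\|_{L^2(\Omega)}^2$ left on the right of Step 1; the coercivity $\|\Delta H^{\neq}\|_{L^2(\Omega)}\geq c_0\|\nabla^2 H^{\neq}\|_{L^2(\Omega)}$ then upgrades $\|\nabla H^{\neq},\Delta H^{\neq}\|^2$ to $\|\nabla H^{\neq}\|_{H^1(\Omega)}^2$, giving \eqref{E:6.27}. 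The crucial contrast with the full-variable estimate \eqref{E:5.22}---where $\bar H,\bar H_t$ survive on the right through \eqref{ebu}---is that here the Poincar\'e inequality \eqref{E:6.a} gives $\|H^{\neq},H_t^{\neq}\|_{L^2(\Omega)}\lesssim\|\nabla H^{\neq},\nabla H_t^{\neq}\|_{L^2(\Omega)}$, so every zeroth-order potential term is swallowed by the left-hand dissipation and no zero-mode contribution remains. I expect the main difficulty to be exactly the control of the projected nonlinearities $(\tfrac{T}{\rho}\nabla z)^{\neq}$, $N_2^{\neq}=e^{\tilde E}(e^H-1-H)^{\neq}$ and $N_{2t}^{\neq}$, which do not factor through $(\cdot)^{\neq}$ and must be estimated purely by $\nabla z^{\neq},\nabla{\bf w}^{\neq},\nabla H^{\neq}$; this is precisely what Lemma \ref{L:6.1} delivers and what keeps the right-hand side both small (order $\delta+\nu$) and free of the zero mode.
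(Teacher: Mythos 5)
Your proposal is correct and follows essentially the same route as the paper: test $\eqref{E:6.1}_2$ against the gradient of $H^{\neq}$ and of $H_t^{\neq}$ (the paper uses the weight $\tilde\rho$ rather than $\rho$, a cosmetic difference), extract the coercive terms from the Poisson equation and the Hodge splitting of $\Delta{\bf w}^{\neq}$, control the nonlinearities and $H_{tt}$-remainders by Lemmas \ref{L:6.1} and \ref{L:6.2}, and combine with a large constant $C_9$ so that the $H_t^{\neq}$-dissipation absorbs the leftover from the first estimate. The identification of the Poincar\'e inequality \eqref{E:6.a} as the reason no zero-mode terms survive matches the paper's argument.
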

\begin{proof}
	Multiplying $\eqref{E:6.1}_2$ by $\cdot \tilde \rho\nabla H^{\neq}$ and integrating the resultants over $\Omega$ gives that
	\begin{align}\label{E:6.9}
		&\int_{\Omega}\partial_t{\bf w}^{\neq}\cdot \tilde\rho\nabla H^{\neq}dx+\int_{\Omega}\left(\frac{T}{\rho}\nabla z\right)^{\neq}\cdot \tilde \rho \nabla H^{\neq}dx-\int_{\Omega}\left(\frac{1}{\rho}\Delta {\bf w}\right)^{\neq}\cdot \tilde \rho\nabla H^{\neq}dx+\int_{\Omega} \tilde \rho\left(\nabla H^{\neq}\right)^2dx\notag\\
		=&-\int_{\Omega}{\bf h_1}^{\neq}\cdot \tilde \rho\nabla H^{\neq}dx-\int_{\Omega}{\bf h_2}^{\neq}\cdot \tilde \rho\nabla H^{\neq}dx.
	\end{align}
Applying the same argements in \eqref{E:5.5b} we get
\begin{align*}
	&\int_{\Omega}\partial_t{\bf w}^{\neq}\cdot \tilde \rho\nabla H^{\neq}dx
	=\frac{d}{dt}E_{1,1}(t)+J_2,
\end{align*}	
where 
\begin{align}
&E_{1,1}(t)=\int_{\Omega}\left[\nabla H^{\neq}\cdot \nabla H_t^{\neq}+e^{\tilde E}H^{\neq}H_t^{\neq}+\left(e^{\tilde E}\right)_t(H^{\neq})^2+H^{\neq}N_{2t}^{\neq}-\nabla H^{\neq}\cdot {\bf J}_1\right]dx,\label{bu1}\\
&J_2=\int_{\Omega}\left[\left(\nabla H_t^{\neq}\right)^2+e^{\tilde E}(H_t^{\neq})^2+\left(e^{\tilde E}\right)_tH^{\neq}H_t^{\neq}+H_t^{\neq}N_{2t}^{\neq}-\nabla H_t^{\neq}\cdot {\bf J}_1+\partial_t\tilde \rho{\bf w}^{\neq}\cdot \nabla H^{\neq}\right]dx.\nonumber
\end{align}
It follows from \eqref{E:6.a}, \eqref{E:6.2b} and \eqref{E:6.6} that
\begin{align*}
	J_2\lesssim \|\nabla H_t^{\neq}\|_{L^2(\Omega)}^2+(\delta+\nu)\|\nabla H^{\neq}, \nabla z^{\neq}, \nabla {\bf w}^{\neq}\|_{L^2(\Omega)}^2.
\end{align*}
Thus, we get 
\begin{align}\label{E:6.10}
	&\int_{\Omega}\partial_t{\bf w}^{\neq}\cdot \tilde \rho\nabla H^{\neq}dx
	\gtrsim \frac{d}{dt}E_{1,1}(t)
-\|\nabla H_t^{\neq}\|_{L^2(\Omega)}^2-(\delta+\nu)\|\nabla H^{\neq}, \nabla z^{\neq}, \nabla {\bf w}^{\neq}\|_{L^2(\Omega)}^2.
\end{align}	
Next, the direct computations shows that
\begin{align*}
	&\int_{\Omega}\left(\frac{T}{\rho}\nabla z\right)^{\neq}\cdot \tilde \rho \nabla H^{\neq}dx
	=\int_{\Omega}T\nabla z^{\neq}\cdot \nabla H^{\neq}dx-\int_{\Omega}T\left(\frac{z}{\rho}\nabla z\right)^{\neq}\cdot \nabla H^{\neq}dx\notag\\
	=&\int_{\Omega}T\left[\left(\Delta H^{\neq}\right)^2+e^{\tilde E}\left(\nabla H^{\neq}\right)^2\right]dx-\int_{\Omega}T\left(\frac{z}{\rho}\nabla z\right)^{\neq}\cdot \nabla H^{\neq}dx+J_3,
\end{align*}
where 
\begin{align*}
	J_3&=T\int_{\Omega}\nabla(e^{\tilde E})H^{\neq}\cdot \nabla H^{\neq}dx+T\int_{\Omega}\nabla N_2^{\neq}\cdot \nabla H^{\neq}dx.
\end{align*}
Note that
\begin{align}\label{po1m}
&\left(\frac{z}{\rho}\nabla z\right)^{\neq}=	\left(\left(\frac{z}{\rho}\right)^{\neq}\nabla z^{\neq}\right)^{\neq}+ \overline{\left(\frac{z}{\rho}\right)}\nabla z^{\neq}+(\nabla z)^{od}\left(\frac{z}{\rho}\right)^{\neq},
\end{align}
then we use \eqref{E:6.6a}, \eqref{E:6.2} and the fact that $\|\Delta H^{\neq}\|_{L^2(\Omega)}\geq c_0\|\nabla^2 H^{\neq}\|_{L^2(\Omega)}$ to get
\begin{align}\label{E:6.11}
	\int_{\Omega}\left(\frac{T}{\rho}\nabla z\right)^{\neq}\cdot \tilde \rho \nabla H^{\neq}dx
	\gtrsim \|\nabla H^{\neq}\|_{H^1(\Omega)}^2-(\delta+\nu) \|\nabla z^{\neq}\|_{L^2(\Omega)}^2.
\end{align}
In addition, for the third term on the left hand side of \eqref{E:6.9}, we have
\begin{align*}
-\left(\frac{1}{\rho}\Delta {\bf w}\right)^{\neq}\cdot \tilde \rho\nabla H^{\neq}
=&-\Delta {\bf w}^{\neq}\cdot \nabla H^{\neq}+\left(\frac{z}{\rho}\Delta {\bf w}\right)^{\neq}\cdot \nabla H^{\neq}
\notag\\
=&\dive(\cdots)+\dive{\bf w}^{\neq}\Delta H^{\neq}+\left(\frac{z}{\rho}\Delta {\bf w}\right)^{\neq}\cdot \nabla H^{\neq},
\end{align*}
where $(\cdots)=\mbox{curl}{\bf w}^{\neq}\times \nabla H^{\neq}-\dive {\bf w}^{\neq}\nabla H^{\neq}$.
It follows from $\eqref{E:6.1}_1$ and $\eqref{E:6.1}_3$ that 
\begin{align}
\dive{\bf w}^{\neq}\Delta H^{\neq}=&-\frac{1}{\tilde \rho}\left(z^{\neq}_t+\nabla \tilde \rho \cdot {\bf w}^{\neq}+\dive {\bf J}_1\right)\Delta H^{\neq}
\notag\\
=&-\frac{1}{\tilde \rho}\left[\left(-\Delta H^{\neq}+e^{\tilde E}H^{\neq}+N_2^{\neq}\right)_t+\nabla \tilde \rho \cdot {\bf w}^{\neq}+\dive {\bf J}_1\right]\Delta H^{\neq},\nonumber
 \end{align}
 together with \eqref{E:6.2b} and \eqref{E:6.6}, which leads to 
 \begin{align}\label{bu2}
 \int_{\Omega} \dive{\bf w}^{\neq}\Delta H^{\neq}dx\gtrsim& \frac{d}{dt} E_{1,2}(t)
-(\delta+\nu)\|\nabla H^{\neq}, \nabla^2 H^{\neq}, \nabla H_t^{\neq}, \nabla z^{\neq}, \nabla {\bf w}^{\neq}\|_{L^2(\Omega)}^2,
 \end{align}
where 
\begin{align}\label{bu4}
E_{1,2}(t)=\int_{\Omega}\frac{1}{2\tilde \rho}\left[(\Delta H^{\neq})^2+e^{\tilde E}(\nabla H^{\neq})^2\right]dx.
\end{align}
On the other hand, note that 
\begin{align*}
	&\left(\frac{z}{\rho}\Delta {\bf w}\right)^{\neq}=	\left(\left(\frac{z}{\rho}\right)^{\neq}\Delta {\bf w}^{\neq}\right)^{\neq}+\overline{\left(\frac{z}{\rho}\right)}\Delta {\bf w}^{\neq}+\overline{(\Delta {\bf w})}\left(\frac{z}{\rho}\right)^{\neq},
\end{align*}
then we use \eqref{E:6.2} to get
\begin{align}\label{bu3}
\Big\|\left(\frac{z}{\rho}\Delta {\bf w}\right)^{\neq}\cdot \nabla H^{\neq}\Big\|_{L^1(\Omega)}\lesssim(\delta+\nu)\|\nabla H^{\neq}, \nabla z^{\neq}, \nabla^2 {\bf w}^{\neq}\|_{L^2(\Omega)}^2.
\end{align}
Combining \eqref{bu2} with \eqref{bu3} we get 
\begin{align}\label{E:6.14}
	&-\int_{\Omega}\left(\frac{1}{\rho}\Delta {\bf w}\right)^{\neq}\cdot \tilde \rho\nabla H^{\neq}dx
	\gtrsim \frac{d}{dt}E_{1,2}(t)-(\delta+\nu)\|\nabla H^{\neq}, \nabla^2 H^{\neq}, \nabla H_t^{\neq}, \nabla z^{\neq}, \nabla {\bf w}^{\neq}, \nabla^2 {\bf w}^{\neq}\|_{L^2(\Omega)}^2.
\end{align}
Moreover, it follows from \eqref{E:6.3b}-\eqref{E:6.2a} that
\begin{align}\label{E:6.15}
-\int_{\Omega}{\bf h_1}^{\neq}\cdot \tilde \rho\nabla H^{\neq}dx-\int_{\Omega}{\bf h_2}^{\neq}\cdot \tilde \rho\nabla H^{\neq}dx
\lesssim(\delta+\nu)\|\nabla H^{\neq}, \nabla z^{\neq}, \nabla {\bf w}^{\neq}\|_{L^2(\Omega)}^2.
\end{align}
Substituting \eqref{E:6.10}-\eqref{E:6.11} and \eqref{E:6.14}-\eqref{E:6.15} into \eqref{E:6.9} gives that
\begin{align}\label{E:6.16}
&\frac{d}{dt}E_1(t)
+\|\nabla H^{\neq}\|_{H^1(\Omega)}^2\lesssim\|\nabla H_t^{\neq}\|_{L^2(\Omega)}^2+(\delta+\nu)\|\nabla z^{\neq}, \nabla {\bf w}^{\neq}, \nabla^2 {\bf w}^{\neq}\|_{L^2(\Omega)}^2,
\end{align}
where 
\begin{align}\label{E:6.35c}
E_1(t)=E_{1,1}(t)+E_{1,2}(t),
\end{align}
and  $E_{1,1}(t), E_{1,2}(t)$ are defined in \eqref{bu1} and \eqref{bu4} respectively.

To control the term containing the electric potential on the right hand side of \eqref{E:6.16}, multiplying $\eqref{E:6.1}_2$ by $\cdot \tilde \rho\nabla H_t^{\neq}$ and integrating the resultant over $\Omega$, we have 
	\begin{align}\label{E:6.21}
	&\frac{d}{dt}\int_{\Omega} \frac{\tilde \rho}{2}\left(\nabla H^{\neq}\right)^2dx+\int_{\Omega}\partial_t{\bf w}^{\neq}\cdot \tilde\rho\nabla H_t^{\neq}dx+\int_{\Omega}\left(\frac{T\tilde \rho}{\rho}\nabla z\right)^{\neq}\cdot  \nabla H_t^{\neq}dx-\int_{\Omega}\left(\frac{\tilde \rho}{\rho}\Delta {\bf w}\right)^{\neq}\cdot \nabla H_t^{\neq}dx
	\notag\\
	&=\int_{\Omega} \frac{\tilde \rho_t}{2}\left(\nabla H^{\neq}\right)^2dx-\int_{\Omega}{\bf h_1}^{\neq}\cdot \tilde \rho\nabla H_t^{\neq}dx-\int_{\Omega}{\bf h_2}^{\neq}\cdot \tilde \rho\nabla H_t^{\neq}dx.
\end{align}
 Applying the similar argument in \eqref{E:6.10} and using Lemma \ref{L:6.1}, we get
 \begin{align}\label{E:6.32}
 \int_{\Omega}\partial_t{\bf w}^{\neq}\cdot \tilde\rho\nabla H_t^{\neq}dx\gtrsim&
\frac{d}{dt} \int_{\Omega}\left[\frac{e^{\tilde E}}{2}\left(H_t^{\neq}\right)^2+\frac{1}{2}\left(\nabla H_t^{\neq}\right)^2\right]dx+\int_{\Omega}N_{2tt}^{\neq}H_t^{\neq}dx
 \notag\\&
 -(\delta+\nu)\|\nabla H_t^{\neq}, \nabla z^{\neq}, \nabla {\bf w}^{\neq}\|_{L^2(\Omega)}^2.
 \end{align}
By the careful computation we have
\begin{align}\label{bu7}
	N_{2tt}^{\neq}H_t^{\neq}=\left[\frac{1}{2}e^{\tilde E}\overline{(e^H-1)}\left(H_t^{\neq}\right)^2\right]_t+\left(e^{\tilde E}\right)_{tt}(e^H-1-H)^{\neq}H_t^{\neq}+J_4+J_5,
\end{align}
where 
\begin{align*}
J_4=&2\left(e^{\tilde E}\right)_{t}\left[\left((e^H-1)^{\neq}H^{\neq}_t\right)^{\neq}+\overline{(e^H-1)}H^{\neq}_t+(e^H-1)^{\neq}\bar H_t\right]H_t^{\neq},\notag\\
J_5=&e^{\tilde E}\left[\left((e^H-1)^{\neq}(H^2_t)^{\neq}\right)^{\neq}+(e^H-1)^{\neq}\overline{(H^2_t)}+\overline{(e^H)}(H^2_t)^{\neq}\right]H_t^{\neq}
\notag\\
&+e^{\tilde E}\left[\left((e^H-1)^{\neq}H_{tt}^{\neq}\right)^{\neq}+(e^H-1)^{\neq}\bar H_{tt}\right]H_t^{\neq}-\frac{1}{2}\left(e^{\tilde E}\overline{(e^H-1)}\right)_t\left(H_t^{\neq}\right)^2.
\end{align*}
From \eqref{E:6.9b} we get
\[\|H_{tt}^{\neq}\|_{W^{1,\infty}(\Omega)}+\|\bar H_{tt}\|_{W^{1,\infty}(\mathbb{R})}\lesssim \|H_{tt}^{\neq}\|_{H^{3}(\Omega)}+\|\bar H_{tt}\|_{H^{2}(\mathbb{R})}\lesssim \|H_{tt}\|_{H^{3}(\Omega)}\lesssim \nu,\]
together with \eqref{E:6.8} and the fact $(H^2_t)^{\neq}=\left((H^{\neq}_t)^2\right)^{\neq}+2\bar H_tH_t^{\neq}$, which leads to 
\begin{align}\label{E:6.34}
\|J_4\|_{L^1(\Omega)}+	\|J_5\|_{L^1(\Omega)}\lesssim (\delta+\nu)\| \nabla H^{\neq}, \nabla H_t^{\neq}\|_{L^2(\Omega)}^2.	
\end{align}
Substituting \eqref{bu7} and \eqref{E:6.34} into \eqref{E:6.32}, we get
 \begin{align}\label{E:6.35}
	\int_{\Omega}\partial_t{\bf w}^{\neq}\cdot \tilde\rho\nabla H_t^{\neq}dx\gtrsim& \frac{d}{dt} E_{2,1}(t)
	-(\delta+\nu)\| \nabla H^{\neq}, \nabla H_t^{\neq}, \nabla z^{\neq}, \nabla {\bf w}^{\neq}\|_{L^2(\Omega)}^2,
\end{align}
 where 
 \begin{align}
 E_{2,1}(t)=\int_{\Omega}\frac{1}{2}\left[\overline{\left(e^{ E}\right)}\left(H_t^{\neq}\right)^2+\left(\nabla H_t^{\neq}\right)^2\right]dx.\nonumber
 \end{align}
Taking the same tecnique used in \eqref{E:6.11} and \eqref{E:6.14} we have 
\begin{align}\label{E:6.14a}
	&\int_{\Omega}\left(\frac{T\tilde \rho}{\rho}\nabla z\right)^{\neq}\cdot  \nabla H_t^{\neq}dx
	=\int_{\Omega}T\nabla z^{\neq}\cdot \nabla H_t^{\neq}dx-\int_{\Omega}T\left(\frac{z}{\rho}\nabla z\right)^{\neq}\cdot \nabla H_t^{\neq}dx\notag\\
	\gtrsim& \frac{d}{dt}\int_{\Omega}\frac{T}{2}\left[e^{\tilde E}\left(\nabla H^{\neq}\right)^2+\left(\Delta H^{\neq}\right)^2\right]dx
	-(\delta+\nu)\|\nabla H^{\neq}, \nabla H_t^{\neq}, \nabla z^{\neq}\|_{L^2(\Omega)}^2
\end{align}
and 
\begin{align}
	-\int_{\Omega}\left(\frac{1}{\rho}\Delta {\bf w}\right)^{\neq}\cdot \tilde \rho\nabla H_t^{\neq}dx
	\gtrsim&\|\nabla H_t^{\neq}\|_{H^1(\Omega)}^2
-(\delta+\nu)\|\nabla H^{\neq}, \nabla z^{\neq}, \nabla {\bf w}^{\neq}, \nabla^2 {\bf w}^{\neq}\|_{L^2(\Omega)}^2,
\end{align}
where we have used the fact that $\|\Delta H_t^{\neq}\|_{L^2(\Omega)}\geq c_0\|\nabla^2 H_t^{\neq}\|_{L^2(\Omega)}$. In addition, it follows from Lemma \ref{L:6.1} that
\begin{align}\label{E:6.15a}
	-\int_{\Omega}{\bf h_1}^{\neq}\cdot \tilde \rho\nabla H_t^{\neq}dx-\int_{\Omega}{\bf h_2}^{\neq}\cdot \tilde \rho\nabla H_t^{\neq}dx
	\lesssim(\delta+\nu)\|\nabla H_t^{\neq}, \nabla z^{\neq}, \nabla {\bf w}^{\neq}\|_{L^2(\Omega)}^2.
\end{align}
Substituting \eqref{E:6.35} and \eqref{E:6.14a}-\eqref{E:6.15a} into \eqref{E:6.21} gives that 
 \begin{align}\label{E:6.35a}
	 \frac{d}{dt}E_2(t)+\|\nabla H_t^{\neq}\|_{H^1(\Omega)}^2
	\lesssim
	(\delta+\nu)\|\nabla H^{\neq}, \nabla z^{\neq}, \nabla {\bf w}^{\neq}, \nabla^2 {\bf w}^{\neq}\|_{L^2(\Omega)}^2,
\end{align}
where 
\begin{align}\label{E:6.35d}
	E_2(t)=E_{2,1}(t)+\int_{\Omega}\frac{T}{2}\left[e^{\tilde E}\left(\nabla H^{\neq}\right)^2+\left(\Delta H^{\neq}\right)^2\right]dx.
\end{align}
Finally, taking the procedure as $\eqref{E:6.16}+C_9\eqref{E:6.35a}$ with some positive constant $C_9$ large enough leads to \eqref{E:6.27}.
Thus, the proof is completed.
\end{proof}
\subsection{the estimates of non-zero mode of $z^{\neq}, {\bf w}^{\neq}$}
\begin{lemma}\label{L:6.4}
Under the assumptions of Theorem \ref{theorem1.1}, it holds that
		\begin{align}\label{E:6.49}
	&\frac{d}{dt}\int_{\Omega}\left(\frac12({\bf w}^{\neq})^2+\frac{T}{2\tilde \rho}(z^{\neq})^2\right)dx+\|\nabla{\bf w}^{\neq}\|_{L^2(\Omega)}^2
	\lesssim(\delta+\nu) \|\nabla z^{\neq}\|_{L^2(\Omega)}^2+\|\nabla H^{\neq}\|_{L^2(\Omega)}^2.
	\end{align}
\end{lemma}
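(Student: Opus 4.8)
The plan is to run a symmetrized energy estimate on the non-zero mode system \eqref{E:6.1}, modeled on the proof of \eqref{E:5.30} but with the projection $(\cdot)^{\neq}$ handled through Lemma \ref{L:6.1} and the Poincar\'e-type inequality \eqref{E:6.a}. First I would test the momentum equation $\eqref{E:6.1}_2$ with ${\bf w}^{\neq}$ and the continuity equation $\eqref{E:6.1}_1$ with the weighted multiplier $\frac{T}{\tilde\rho}z^{\neq}$, integrate over $\Omega$, and add. Testing $\partial_t{\bf w}^{\neq}\cdot{\bf w}^{\neq}$ and $\frac{T}{\tilde\rho}z^{\neq}\partial_t z^{\neq}$ produces the time-derivative of the claimed energy $\int_{\Omega}\left(\frac12({\bf w}^{\neq})^2+\frac{T}{2\tilde\rho}(z^{\neq})^2\right)dx$, up to a remainder $\sim(z^{\neq})^2\,\partial_t\tilde\rho$ which is $O(\delta)$ because $\partial_t\tilde\rho$ is small by Lemma \ref{nl}, and hence absorbable after the Poincar\'e inequality $\|z^{\neq}\|_{L^2}\lesssim\|\nabla z^{\neq}\|_{L^2}$.

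The core of the argument is the cancellation between the pressure term $\left(\frac{T}{\rho}\nabla z\right)^{\neq}\cdot{\bf w}^{\neq}$ from the momentum equation and the divergence term $\frac{T}{\tilde\rho}z^{\neq}\,\dive(\tilde\rho{\bf w}^{\neq})$ from the continuity equation. Using the orthogonality $\int_\Omega f^{\neq}g\,dx=\int_\Omega f^{\neq}g^{\neq}\,dx$ (which follows from $\overline{f^{\neq}}=0$ and lets me drop the projection when pairing against the non-zero mode ${\bf w}^{\neq}$), replacing $\frac{1}{\rho}$ by its leading part $\frac{1}{\tilde\rho}$, and integrating by parts, these two contributions combine into a perfect divergence $\int_\Omega\dive(\ldots)\,dx=0$ up to commutator terms carrying a factor $\partial_1\tilde\rho$ or $\frac{1}{\rho}-\frac{1}{\tilde\rho}=-\frac{z}{\rho\tilde\rho}$, all of size $O(\delta+\nu)$.

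For the viscous term I would use $-\left(\frac{1}{\rho}\Delta{\bf w}\right)^{\neq}\cdot{\bf w}^{\neq}$, drop the projection against ${\bf w}^{\neq}$, split $\Delta{\bf w}=\Delta{\bf w}^{\neq}+\partial_1^2\bar{\bf w}$, and integrate by parts; the leading piece yields the coercive dissipation $\int_\Omega\frac{1}{\rho}|\nabla{\bf w}^{\neq}|^2\,dx\gtrsim\|\nabla{\bf w}^{\neq}\|_{L^2(\Omega)}^2$, while the terms produced by $\nabla\frac{1}{\rho}$ and by $\partial_1^2\bar{\bf w}$ are absorbed using $\|\nabla\rho\|_{L^\infty}+\|\nabla\bar{\bf w}\|_{L^\infty}\lesssim\delta+\nu$ (from \eqref{qo1} and the a priori bound \eqref{E:5.43g}) together with the Poincar\'e inequality $\|{\bf w}^{\neq}\|_{L^2}\lesssim\|\nabla{\bf w}^{\neq}\|_{L^2}$. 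The electric coupling $\int_\Omega\nabla H^{\neq}\cdot{\bf w}^{\neq}\,dx$ is bounded by Young's inequality and the same Poincar\'e inequality, contributing the term $\|\nabla H^{\neq}\|_{L^2}^2$ on the right-hand side --- the only contribution without a smallness factor. Finally, the nonlinear terms ${\bf h}_1^{\neq}$, ${\bf h}_2^{\neq}$ (paired with ${\bf w}^{\neq}$) and ${\bf J}_1$ (paired, after integration by parts, with $\nabla z^{\neq}$) are all controlled by $(\delta+\nu)\|\nabla z^{\neq},\nabla{\bf w}^{\neq}\|_{L^2}^2$ directly from Lemma \ref{L:6.1}, so the ${\bf w}^{\neq}$-gradient part is absorbed into the dissipation and the $z^{\neq}$-gradient part produces the stated $(\delta+\nu)\|\nabla z^{\neq}\|_{L^2}^2$.

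I expect the main obstacle to be the interaction of the projection $(\cdot)^{\neq}$ with the variable coefficient $\frac1\rho$ and the quadratic nonlinearities: one cannot move derivatives or factor out $(\cdot)^{\neq}$ freely, so each nonlinear quantity must be reduced, via the decompositions of Lemma \ref{L:6.1} (e.g. $\left(\frac{z}{\rho}\right)^{\neq}$ estimated through \eqref{E:6.3}), to a genuine non-zero mode to which \eqref{E:6.a} applies. Keeping the velocity dissipation as the only second-order quantity --- so that $\Delta{\bf w}^{\neq}$ never survives on the right --- is exactly what forces the integrations by parts in the viscous and pressure steps above.
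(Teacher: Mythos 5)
Your proposal is correct and follows essentially the same route as the paper: test $\eqref{E:6.1}_2$ with ${\bf w}^{\neq}$, generate the $\tfrac{T}{2\tilde\rho}(z^{\neq})^2$ energy by feeding $\eqref{E:6.1}_1$ into the pressure term after integration by parts (which is algebraically the same as your separate test with $\tfrac{T}{\tilde\rho}z^{\neq}$), extract the dissipation from the viscous term with $\tfrac1\rho$ replaced by $\tfrac1{\tilde\rho}$, and control every commutator and nonlinear remainder via Lemma \ref{L:6.1}, the decomposition \eqref{E:6.3}, and the Poincar\'e inequality \eqref{E:6.a}. No substantive difference from the paper's argument.
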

\begin{proof}
Multiplying $\eqref{E:6.1}_2$ by $\cdot {\bf w}^{\neq}$ and integrating the resultant over $\Omega$, we use \eqref{E:6.3b}-\eqref{E:6.2a} to get 
	\begin{align}\label{E:6.46}
		&\frac{d}{dt}\int_{\Omega}\frac12({\bf w}^{\neq})^2dx+\int_{\Omega}\left(\frac{T}{\rho}\nabla z\right)^{\neq}\cdot {\bf w}^{\neq}dx-\int_{\Omega}\left(\frac{1}{\rho}\Delta {\bf w}\right)^{\neq}\cdot {\bf w}^{\neq}dx
		\notag\\
		\lesssim&(\delta+\nu) \|\nabla z^{\neq}\|_{L^2(\Omega)}^2+(\epsilon+\delta+\nu) \|\nabla{\bf w}^{\neq}\|_{L^2(\Omega)}^2+C_{\epsilon}\|\nabla H^{\neq}\|_{L^2(\Omega)}^2,
	\end{align}
where $\epsilon$ is a small constant to be determined later. It follows from $\eqref{E:6.1}_1$ and \eqref{E:6.2} that 
\begin{align}\label{E:6.47}
\int_{\Omega}\left(\frac{T}{\rho}\nabla z\right)^{\neq}\cdot {\bf w}^{\neq}dx=&\int_{\Omega}\frac{T}{\tilde \rho}\nabla z^{\neq}\cdot {\bf w}^{\neq}dx-\int_{\Omega}\left(\frac{Tz}{\rho}\nabla z\right)^{\neq}\cdot {\bf w}^{\neq}dx
\notag\\
=&-\int_{\Omega}\nabla\left(\frac{T}{\tilde \rho}\right) z^{\neq}\cdot {\bf w}^{\neq}dx+\int_{\Omega}\frac{Tz^{\neq}}{\tilde \rho}\left(\partial_t z^{\neq}+\nabla \tilde \rho\cdot {\bf w}^{\neq}\right)dx
\notag\\
&-\int_{\Omega}\nabla \left(\frac{Tz^{\neq}}{\tilde \rho}\right)\cdot {\bf J}_1dx-\int_{\Omega}\left(\frac{Tz}{\rho}\nabla z\right)^{\neq}\cdot {\bf w}^{\neq}dx
\notag\\
\gtrsim&\frac{d}{dt}\int_{\Omega}\frac{T}{2\tilde \rho}(z^{\neq})^2dx-(\delta+\nu)\|\nabla z^{\neq}, \nabla{\bf w}^{\neq}\|_{L^2(\Omega)}^2
\end{align}
and 
\begin{align}\label{E:6.48}
-\int_{\Omega}\left(\frac{1}{\rho}\Delta {\bf w}\right)^{\neq}\cdot {\bf w}^{\neq}dx=&-\int_{\Omega}\frac{1}{\tilde \rho}\Delta {\bf w}^{\neq}\cdot {\bf w}^{\neq}dx+\int_{\Omega}\left(\frac{z}{\rho}\Delta {\bf w}\right)^{\neq}\cdot {\bf w}^{\neq}dx
\notag\\
=&\int_{\Omega}\frac{1}{\tilde \rho}\left(\nabla {\bf w}^{\neq}\right)^2dx+\int_{\Omega}\nabla\left(\frac{1}{\tilde \rho}\right)\cdot\nabla {\bf w}^{\neq}\cdot {\bf w}^{\neq}dx
\notag\\
&-\int_{\Omega}\left(\frac{z}{ \rho}\nabla {\bf w}\right)^{\neq}\cdot \nabla{\bf w}^{\neq}dx-\int_{\Omega}\left(\nabla\left(\frac{z}{\rho}\right)\cdot \nabla{\bf w}\right)^{\neq}\cdot {\bf w}^{\neq}dx
\notag\\
\gtrsim&\|\nabla{\bf w}^{\neq}\|_{L^2(\Omega)}^2.
\end{align}
Then, substituting \eqref{E:6.47}-\eqref{E:6.48} into \eqref{E:6.46} and choosing $\epsilon$ small enough yields that \eqref{E:6.49}. Thus, the proof is completed.
\end{proof}

\begin{lemma}\label{L:6.5}
Under the assumptions of Theorem \ref{theorem1.1}, it holds that
		\begin{align}\label{E:6.61}
		&\frac{d}{dt}\int_{\Omega}\left[C_{10}\left(\frac12(\nabla{\bf w}^{\neq})^2+\frac{T}{2\tilde \rho^2}\left(\nabla z^{\neq}\right)^2\right)-{\bf w}^{\neq}\cdot \nabla z^{\neq}\right]dx+\|\nabla z^{\neq}, \nabla^2 {\bf w}^{\neq}\|_{L^{2}(\Omega)}^2
		\lesssim \|\nabla {\bf w}^{\neq}, \nabla H^{\neq}\|_{L^2(\Omega)}^2.
	\end{align}
\end{lemma}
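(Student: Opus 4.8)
The plan is to mimic the first-order energy method of Lemma \ref{L:5.4}, now applied to the non-zero-mode system \eqref{E:6.1}, and to offload every nonlinearity produced by the projection $(\cdot)^{\neq}$ to Lemma \ref{L:6.1}. I would obtain \eqref{E:6.61} by combining two weighted identities: a derivative estimate producing the positive-definite part $\frac12(\nabla{\bf w}^{\neq})^2+\frac{T}{2\tilde\rho^2}(\nabla z^{\neq})^2$ together with the dissipation $\|\nabla^2{\bf w}^{\neq}\|_{L^2(\Omega)}^2$, and a second estimate, obtained by testing $\eqref{E:6.1}_2$ against $\nabla z^{\neq}$, that supplies the missing dissipation $\|\nabla z^{\neq}\|_{L^2(\Omega)}^2$ at the price of the cross term $-{\bf w}^{\neq}\cdot\nabla z^{\neq}$ recorded in the functional.

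First I would apply $\partial_x^\alpha$ with $|\alpha|=1$ to $\eqref{E:6.1}_2$ and take the $L^2(\Omega)$ inner product with $\partial_x^\alpha{\bf w}^{\neq}$. The time-derivative term yields $\frac{d}{dt}\frac12\|\nabla{\bf w}^{\neq}\|_{L^2(\Omega)}^2$; the viscous term $-\left(\frac1\rho\Delta{\bf w}\right)^{\neq}$, after writing $\frac1\rho=\frac1{\tilde\rho}-\frac{z}{\rho\tilde\rho}$ and integrating by parts, produces the coercive $\frac1{\tilde\rho}\|\nabla^2{\bf w}^{\neq}\|_{L^2(\Omega)}^2$ plus remainders of the form $\left(\frac{z}{\rho}\Delta{\bf w}\right)^{\neq}$ controlled by \eqref{E:6.2}; the pressure term $\left(\frac{T}{\rho}\nabla z\right)^{\neq}$, after one integration by parts and substitution of $\partial_x^\alpha(\dive{\bf w}^{\neq})$ from the continuity equation $\eqref{E:6.1}_1$ (as in the derivation of \eqref{E:5.32} via \eqref{E:5.33}), generates the energy term $\frac{d}{dt}\int\frac{T}{2\tilde\rho^2}(\nabla z^{\neq})^2\,dx$; the term $\nabla H^{\neq}$ is absorbed into $\epsilon\|\nabla^2{\bf w}^{\neq}\|_{L^2(\Omega)}^2+C_\epsilon\|\nabla H^{\neq}\|_{L^2(\Omega)}^2$; and the nonlinear contributions $\partial_x^\alpha({\bf h}_1^{\neq}+{\bf h}_2^{\neq})$ and $\dive{\bf J}_1$ are handled by \eqref{E:6.2b}-\eqref{E:6.2a}. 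This yields $\frac{d}{dt}\int[\frac12(\nabla{\bf w}^{\neq})^2+\frac{T}{2\tilde\rho^2}(\nabla z^{\neq})^2]\,dx+c\|\nabla^2{\bf w}^{\neq}\|_{L^2(\Omega)}^2\lesssim\epsilon\|\nabla^2{\bf w}^{\neq}\|_{L^2(\Omega)}^2+(\delta+\nu)\|\nabla z^{\neq},\nabla{\bf w}^{\neq}\|_{L^2(\Omega)}^2+\|\nabla H^{\neq}\|_{L^2(\Omega)}^2$.

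Next I would take the $L^2(\Omega)$ inner product of $\eqref{E:6.1}_2$ with $\nabla z^{\neq}$. The pressure term gives the coercive dissipation $\int\frac{T}{\tilde\rho}(\nabla z^{\neq})^2\,dx$, modulo the correction $\left(\frac{z}{\rho}\nabla z\right)^{\neq}$ bounded via \eqref{E:6.2}. Writing $\int\partial_t{\bf w}^{\neq}\cdot\nabla z^{\neq}\,dx=\frac{d}{dt}\int{\bf w}^{\neq}\cdot\nabla z^{\neq}\,dx+\int(\dive{\bf w}^{\neq})\,\partial_t z^{\neq}\,dx$ and replacing $\partial_t z^{\neq}=-\dive(\tilde\rho{\bf w}^{\neq}+{\bf J}_1)$ from $\eqref{E:6.1}_1$, the resulting term is controlled by $\|\nabla{\bf w}^{\neq}\|_{L^2(\Omega)}^2$ together with \eqref{E:6.2b}; this is the origin of the cross term $-{\bf w}^{\neq}\cdot\nabla z^{\neq}$ appearing in \eqref{E:6.61}. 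The terms $\left(\frac1\rho\Delta{\bf w}\right)^{\neq}\cdot\nabla z^{\neq}$ and $\nabla H^{\neq}\cdot\nabla z^{\neq}$ are split by Young's inequality into $\epsilon\|\nabla z^{\neq}\|_{L^2(\Omega)}^2$ plus $C_\epsilon\|\nabla^2{\bf w}^{\neq}\|_{L^2(\Omega)}^2$ and $C_\epsilon\|\nabla H^{\neq}\|_{L^2(\Omega)}^2$, while the nonlinearities are again disposed of by Lemma \ref{L:6.1}.

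Finally I would form $C_{10}\cdot(\text{first estimate})+(\text{second estimate})$ with $C_{10}$ large. Taking $C_{10}$ bigger than the $C_\epsilon$ from the second estimate lets $C_{10}c\|\nabla^2{\bf w}^{\neq}\|_{L^2(\Omega)}^2$ absorb the $C_\epsilon\|\nabla^2{\bf w}^{\neq}\|_{L^2(\Omega)}^2$ term, while the coercive $\frac{T}{\tilde\rho}\|\nabla z^{\neq}\|_{L^2(\Omega)}^2$ absorbs $C_{10}(\delta+\nu)\|\nabla z^{\neq}\|_{L^2(\Omega)}^2$ once $\delta,\nu$ are taken small after $C_{10}$ is fixed; choosing $\epsilon$ small first closes the remaining $\epsilon$-terms. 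For $C_{10}$ large the functional $C_{10}\left(\frac12(\nabla{\bf w}^{\neq})^2+\frac{T}{2\tilde\rho^2}(\nabla z^{\neq})^2\right)-{\bf w}^{\neq}\cdot\nabla z^{\neq}$ is comparable to $\|\nabla z^{\neq},\nabla{\bf w}^{\neq}\|_{L^2(\Omega)}^2$ by Young's inequality, so the precise sign of the cross term is immaterial to the argument. This gives exactly \eqref{E:6.61}. The principal obstacle is not the energy bookkeeping but the control of the projected nonlinearities $\left(\frac{z}{\rho}\nabla z\right)^{\neq}$, $\left(\frac{z}{\rho}\Delta{\bf w}\right)^{\neq}$, ${\bf h}_i^{\neq}$ and ${\bf J}_1$ by gradients of the non-zero modes with a small prefactor; this is precisely where the decompositions behind Lemma \ref{L:6.1} (cf. \eqref{E:6.2} and \eqref{po1m}) are indispensable, since $(\cdot)^{\neq}$ does not commute with products.
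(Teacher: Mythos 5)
Your proposal is correct and follows essentially the same route as the paper: the same two tested identities (first differentiating $\eqref{E:6.1}_2$ once and pairing with $\partial_x^{\alpha}{\bf w}^{\neq}$, using $\eqref{E:6.1}_1$ to convert the pressure term into $\frac{d}{dt}\int\frac{T}{2\tilde\rho^2}(\nabla z^{\neq})^2dx$ as in \eqref{E:6.52}--\eqref{E:6.58}; then pairing $\eqref{E:6.1}_2$ with $\nabla z^{\neq}$ to produce the dissipation of $\nabla z^{\neq}$ and the cross term as in \eqref{E:6.60}), with the projected nonlinearities disposed of by Lemma \ref{L:6.1} and the final combination $C_{10}\cdot(\text{first})+(\text{second})$. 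No gaps.
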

\begin{proof}
		Taking the procedure as $\int_{\Omega}\partial_x^{\alpha}\eqref{E:6.1}_2\cdot \partial_x^{\alpha}{\bf w}^{\neq}dx$ with the multi-index $|\alpha|=1$, we have
			\begin{align}\label{E:6.50}
		&\frac{d}{dt}\int_{\Omega}\frac12(\partial_x^{\alpha}{\bf w}^{\neq})^2dx+\int_{\Omega}\partial_x^{\alpha}\left(\frac{T}{\rho}\nabla z\right)^{\neq}\cdot \partial_x^{\alpha}{\bf w}^{\neq}dx-\int_{\Omega}\partial_x^{\alpha}\left(\frac{1}{\rho}\Delta {\bf w}\right)^{\neq}\cdot \partial_x^{\alpha}{\bf w}^{\neq}dx
		\notag\\
		\lesssim&(\delta+\nu)\|\nabla z^{\neq}, \nabla {\bf w}^{\neq}, \nabla^{2} {\bf w}^{\neq}\|_{L^2(\Omega)}^2+\epsilon\|\nabla \partial_x^{\alpha} {\bf w}^{\neq}\|_{L^2(\Omega)}^2+C_{\epsilon}\|\partial_x^{\alpha}H^{\neq}\|_{L^2(\Omega)}^2.
		\end{align}
	It follows that
\begin{align}
	&\int_{\Omega}\partial_x^{\alpha}\left(\frac{T}{\rho}\nabla z\right)^{\neq}\cdot \partial_x^{\alpha}{\bf w}^{\neq}dx
	=-\int_{\Omega}\frac{T}{\tilde \rho}\partial_x^{\alpha}z^{\neq}\partial_x^{\alpha}\dive{\bf w}^{\neq}dx+J_6,\nonumber
\end{align}	
where 
\begin{align*}
J_6=&-\int_{\Omega}\nabla\left(\frac{T}{\tilde \rho}\right)\partial_x^{\alpha}z^{\neq}\cdot\partial_x^{\alpha}{\bf w}^{\neq}dx+\sum_{|\beta|=1, \beta \leq \alpha}\int_{\Omega}\partial_x^{\beta}\left(\frac{T}{\tilde \rho}\right)\partial_x^{\alpha-\beta}\nabla z^{\neq}\cdot \partial_x^{\alpha}{\bf w}^{\neq}dx
\notag\\
&-\frac{T}{\tilde \rho}\int_{\Omega}\partial_x^{\alpha}\left(\frac{z}{\rho}\nabla z\right)^{\neq}\cdot \partial_x^{\alpha}{\bf w}^{\neq}dx-\sum_{|\beta|=1, \beta \leq \alpha}\int_{\Omega}\partial_x^{\beta}\left(\frac{T}{\tilde \rho}\right)\partial_x^{\alpha-\beta}\left(\frac{z}{\rho}\nabla z\right)^{\neq}\cdot \partial_x^{\alpha}{\bf w}^{\neq}dx.
\end{align*}
Similar to \eqref{E:5.33}, it follows from $\eqref{E:6.1}_1$ that 
\begin{align}\label{E:6.52}
\partial_x^\alpha(\operatorname{div} {\bf w}^{\neq})=\frac{-1}{\tilde \rho}\left\{\partial_x^\alpha z^{\neq}_t+\partial_x^\alpha\left(\nabla \tilde \rho \cdot {\bf w}^{\neq}\right)+\partial_x^\alpha\dive {\bf J}_1+\sum_{|\beta|=1, \beta \leq \alpha} \partial_x^\beta \tilde\rho \partial_x^{\alpha-\beta}(\operatorname{div} {\bf w}^{\neq})\right\}.
\end{align}
We can check that 
\begin{align}
\int_{\Omega}\frac{T}{\tilde \rho^2}\partial_x^{\alpha}z^{\neq}\partial_x^{\alpha}z_t^{\neq}dx\gtrsim \frac{d}{dt}\int_{\Omega}\frac{T}{2\tilde \rho^2}\left(\partial_x^{\alpha}z^{\neq}\right)^2dx-\delta \|\partial_x^{\alpha} z^{\neq}\|_{L^2(\Omega)}^2
\end{align}
and from \eqref{qo1} and \eqref{qo2} that
\begin{align}\label{E:6.54}
	&\int_{\Omega}\frac{T}{\tilde \rho^2}\partial_x^{\alpha}z^{\neq}\partial_x^{\alpha}\dive {\bf J}_1dx
	\notag\\
=&\int_{\Omega}\frac{T}{\tilde \rho^2}\partial_x^{\alpha}z^{\neq}\partial_x^{\alpha}\dive \left(\left(z^{\neq}{\bf w}^{\neq}\right)^{\neq}+\bar z{\bf w}^{\neq}\right)dx-\int_{\Omega}\dive\left(\frac{T}{2\tilde \rho^2}\left(\bar{\bf w}+\tilde {\bf u}\right)\right)(\partial_x^{\alpha}z^{\neq})^2dx
\notag\\
&+\int_{\Omega}\frac{T}{\tilde \rho^2}\partial_x^{\alpha}z^{\neq}\partial_x^{\alpha}\left[\dive\left(\bar{\bf w}+\tilde {\bf u}\right)z^{\neq}\right]dx+\sum_{|\beta|=1, \beta \leq \alpha}\int_{\Omega}\frac{T}{\tilde \rho^2}\partial_x^{\alpha}z^{\neq}\partial_x^{\beta}\left(\bar{\bf w}+\tilde {\bf u}\right)\partial_x^{\alpha-\beta}\nabla z^{\neq}dx
\notag\\
	\lesssim&\|{\bf w}^{\neq}\|_{L^{\infty}(\Omega)}\|\partial_x^{\alpha}\nabla z^{\neq}\|_{L^{2}(\Omega)}\|\partial_x^{\alpha}z^{\neq}\|_{L^{2}(\Omega)}
	\notag\\
	&+(\delta+\|z^{\neq}\|_{W^{2,\infty}(\Omega)}+\|\partial_x^{\alpha+1}\bar{\bf w},\partial_x^{\alpha+1}\bar z\|_{L^{\infty}(\mathbb{R})})\|\nabla z^{\neq}, \partial_x^{\alpha}z^{\neq}, \nabla{\bf w}^{\neq}, \partial_x^{\alpha}{\bf w}^{\neq},\partial_x^{\alpha}\dive {\bf w}^{\neq}\|_{L^{2}(\Omega)}^2
	\notag\\
	\lesssim&(\delta+\nu)\|\nabla z^{\neq}, \nabla^{|\alpha|}z^{\neq}, \nabla{\bf w}^{\neq}, \nabla^2{\bf w}^{\neq},\nabla^{|\alpha|+1} {\bf w}^{\neq}\|_{L^{2}(\Omega)}^2,
\end{align}
where in the last inequality we have used the fact that 
\[\|\partial_x^{\alpha}\nabla z^{\neq}\|_{L^{2}(\Omega)}\lesssim \|z^{\neq}\|_{H^{3}(\Omega)}\lesssim \nu\quad\mbox{and}\quad \|{\bf w}^{\neq}\|_{L^{\infty}(\Omega)}\lesssim \|{\bf w}^{\neq}\|_{H^{2}(\Omega)}.\]
Then  \eqref{E:6.52}-\eqref{E:6.54} gives that
\begin{align}\label{aE:6.54}
-\int_{\Omega}\frac{T}{\tilde \rho}\partial_x^{\alpha}z^{\neq}\partial_x^{\alpha}\dive{\bf w}^{\neq}dx\gtrsim \frac{d}{dt}\int_{\Omega}\frac{T}{2\tilde \rho^2}\left(\partial_x^{\alpha}z^{\neq}\right)^2dx-(\delta+\nu)\|\nabla z^{\neq}, \nabla{\bf w}^{\neq}, \nabla^2 {\bf w}^{\neq}\|_{L^{2}(\Omega)}^2.
\end{align}
In addition, it follows from \eqref{qo2}, \eqref{E:6.2} and \eqref{po1m} that
\begin{align}
	\|J_6\|_{L^1(\Omega)}\lesssim&(\delta+\|z^{\neq}\|_{W^{2,\infty}(\Omega)})\|\partial_x^{\alpha}z^{\neq}, \nabla z^{\neq}, \partial_x^{\alpha}{\bf w}^{\neq}\|_{L^{2}(\Omega)}^2+\|z^{\neq}\|_{L^{\infty}(\Omega)}\|\partial_x^{\alpha}\nabla z^{\neq}\|_{L^{2}(\Omega)}\|\partial_x^{\alpha}{\bf w}^{\neq}\|_{L^{2}(\Omega)}
	\notag\\
	\lesssim&(\delta+\nu)\|\nabla z^{\neq}, \nabla^{|\alpha|} z^{\neq},  \nabla^{|\alpha|} {\bf w}^{\neq}\|_{L^{2}(\Omega)}^2,\nonumber
	\end{align}
together with \eqref{aE:6.54}, which yields that 
\begin{align}\label{E:6.56}
	&\int_{\Omega}\partial_x^{\alpha}\left(\frac{T}{\rho}\nabla z\right)^{\neq}\cdot \partial_x^{\alpha}{\bf w}^{\neq}dx
	\notag\\
	\gtrsim& \frac{d}{dt}\int_{\Omega}\frac{T}{2\tilde \rho^2}\left(\partial_x^{\alpha}z^{\neq}\right)^2dx
	-(\delta+\nu)\|\nabla z^{\neq}, \nabla^{|\alpha|} z^{\neq}, \nabla{\bf w}^{\neq}, \nabla^{2} {\bf w}^{\neq}, \nabla^{|\alpha|+1} {\bf w}^{\neq}\|_{L^{2}(\Omega)}^2.
\end{align}	
Similarly, it holds that
\begin{align*}
	-\int_{\Omega}\partial_x^{\alpha}\left(\frac{1}{\rho}\Delta {\bf w}\right)^{\neq}\cdot \partial_x^{\alpha}{\bf w}^{\neq}dx=\int_{\Omega}\frac{1}{\tilde \rho} |\nabla\partial_x^{\alpha}{\bf w}^{\neq}|^2dx+J_7,
\end{align*}
where 
\begin{align*}
	J_7=&\int_{\Omega}\nabla\left(\frac{1}{\tilde \rho}\right)\cdot(\nabla \partial_x^{\alpha}{\bf w}^{\neq}\cdot  \partial_x^{\alpha}{\bf w}^{\neq})dx
-\sum_{|\beta|=1, \beta \leq \alpha}\int_{\Omega}\partial_x^{\beta}\left(\frac{1}{\tilde \rho}\right)\partial_x^{\alpha-\beta}\Delta {\bf w}^{\neq}\cdot \partial_x^{\alpha}{\bf w}^{\neq}dx
\notag\\
&-\int_{\Omega}\nabla\left(\frac{1}{\tilde \rho}\right)\cdot\partial_x^{\alpha}\left(\frac{z}{\rho}\nabla {\bf w}\right)^{\neq}\cdot  \partial_x^{\alpha}{\bf w}^{\neq}dx-\frac{1}{\tilde \rho}\int_{\Omega}\partial_x^{\alpha}\left(\frac{z}{\rho}\nabla {\bf w}\right)^{\neq}\cdot \nabla\partial_x^{\alpha}{\bf w}^{\neq}dx
	\notag\\
	&-\int_{\Omega}\frac{1}{\tilde \rho}\partial_x^{\alpha}\left(\nabla\left(\frac{z}{\rho}\right)\cdot\nabla {\bf w}\right)^{\neq}\cdot \partial_x^{\alpha}{\bf w}^{\neq}dx+\sum_{|\beta|=1, \beta \leq \alpha}\int_{\Omega}\partial_x^{\beta}\left(\frac{1}{\tilde \rho}\right)\partial_x^{\alpha-\beta}\left(\frac{z}{\rho}\Delta {\bf w}\right)^{\neq}\cdot\partial_x^{\alpha}{\bf w}^{\neq}dx.
\end{align*}
From \eqref{E:6.2} we have
\begin{align}
\|J_7\|_{L^1(\Omega)}\lesssim& (\delta+\|z^{\neq}\|_{W^{2,\infty}(\Omega)}) \|\nabla {\bf w}^{\neq}, \nabla^2{\bf w}^{\neq}, \nabla^{|\alpha|+1} {\bf w}^{\neq}\|_{L^{2}(\Omega)}^2
		\notag\\
		&+\|\nabla{\bf w}^{\neq}\|_{L^{\infty}(\Omega)}\|\partial_x^{\alpha}\nabla z^{\neq}\|_{L^{2}(\Omega)}\|\partial_x^{\alpha}{\bf w}^{\neq}\|_{L^{2}(\Omega)}
		\notag\\
	\lesssim&(\delta+\nu) \| \nabla {\bf w}^{\neq}, \nabla^2{\bf w}^{\neq}, \nabla^{|\alpha|+1} {\bf w}^{\neq}\|_{L^{2}(\Omega)}^2,\nonumber
\end{align}
which yields that 
\begin{align}\label{E:6.57}
	-\int_{\Omega}\partial_x^{\alpha}\left(\frac{1}{\rho}\Delta {\bf w}\right)^{\neq}\cdot \partial_x^{\alpha}{\bf w}^{\neq}dx\gtrsim \|\nabla\partial_x^{\alpha}{\bf w}^{\neq}\|_{L^{2}(\Omega)}^2-(\delta+\nu) \| \nabla{\bf w}^{\neq},  \nabla^{|\alpha|}{\bf w}^{\neq}\|_{L^{2}(\Omega)}^2.
\end{align}
Substituting \eqref{E:6.56}-\eqref{E:6.57} into \eqref{E:6.50}, we have 
	\begin{align}\label{E:6.58}
	&\frac{d}{dt}\int_{\Omega}\left(\frac12(\nabla{\bf w}^{\neq})^2+\frac{T}{2\tilde \rho^2}\left(\nabla z^{\neq}\right)^2\right)dx+\|\nabla^2 {\bf w}^{\neq}\|_{L^{2}(\Omega)}^2
	\notag\\
	\lesssim&(\delta+\nu)\|\nabla z^{\neq}, \nabla {\bf w}^{\neq}\|_{L^2(\Omega)}^2+C_{\epsilon}\|\nabla H^{\neq}\|_{L^2(\Omega)}^2.
\end{align}

	Moreover, multiplying $\eqref{E:6.1}_2$ by $\cdot \nabla z^{\neq}$ gives that 
\begin{align*}
	\frac{T}{\tilde \rho}(\nabla z^{\neq})^2=&\left(\frac{Tz}{\rho}\nabla z\right)^{\neq}\cdot \nabla z^{\neq}-	({\bf w}^{\neq}\cdot \nabla z^{\neq})_t+\dive({\bf w}^{\neq}z_t^{\neq})+\dive {\bf w}^{\neq}\dive \left(\tilde \rho {\bf w}^{\neq}+{\bf J}_1\right)
	\notag\\
	&+(-{\bf h}_1^{\neq}-{\bf h}_2^{\neq})\cdot \nabla z^{\neq}-\nabla H^{\neq}\cdot \nabla z^{\neq}+\frac{\Delta {\bf w}^{\neq}}{\tilde \rho}\cdot \nabla z^{\neq}
	-\left(\frac{z}{\rho}\Delta {\bf w}\right)^{\neq}\cdot \nabla z^{\neq},
\end{align*}
then we use Lemma \ref{L:6.1} to get 
\begin{align}\label{E:6.60}
	\int_{\Omega}\frac{T}{2\tilde \rho}(\nabla z^{\neq})^2dx\lesssim \frac{d}{dt}\int_{\Omega}{\bf w}^{\neq}\cdot \nabla z^{\neq}dx+\|\nabla {\bf w}^{\neq}, \nabla^2 {\bf w}^{\neq}, \nabla H^{\neq}\|_{L^2(\Omega)}^2.
\end{align}
Taking the procedure as $C_{10}\eqref{E:6.58}+\eqref{E:6.60}$ with some positive constant $C_{10}$ large enough leads to \eqref{E:6.61}. Thus, the proof is completed.
\end{proof}

From Lemmas \ref{L:6.3}-\ref{L:6.5} we have
\begin{lemma}\label{L:6.6}
Under the assumptions of Theorem \ref{theorem1.1}, it holds that
\begin{align}\label{E:6.67a}
		&\frac{d}{dt}E_3(t)+\|\nabla H^{\neq}, \nabla H_t^{\neq}, \nabla {\bf w}^{\neq}\|_{H^1(\Omega)}^2+\| \nabla z^{\neq}\|_{L^2(\Omega)}^2
		\lesssim 0,
\end{align}
where $E_3(t)$ satisfies
	\begin{align}\label{E:6.67av}
\|\nabla H^{\neq}, H_t^{\neq}, z^{\neq}, {\bf w}^{\neq}\|_{H^1(\Omega)}^2\lesssim E_3(t)\lesssim  \|\nabla H^{\neq}, H_t^{\neq}, z^{\neq}, {\bf w}^{\neq}\|_{H^1(\Omega)}^2.
\end{align}
\end{lemma}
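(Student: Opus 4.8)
The plan is to obtain \eqref{E:6.67a}--\eqref{E:6.67av} purely by combining the three differential inequalities already established in Lemmas \ref{L:6.3}--\ref{L:6.5}, namely \eqref{E:6.27}, \eqref{E:6.49} and \eqref{E:6.61}, with a hierarchy of large constants and then absorbing all right-hand side terms into the accumulated dissipation. Concretely, I would set
\[
E_3(t):=C_{12}\bigl(E_1(t)+C_9E_2(t)\bigr)+C_{11}\int_{\Omega}\Bigl(\tfrac12(\mathbf{w}^{\neq})^2+\tfrac{T}{2\tilde\rho}(z^{\neq})^2\Bigr)dx+\int_{\Omega}\Bigl[C_{10}\Bigl(\tfrac12(\nabla\mathbf{w}^{\neq})^2+\tfrac{T}{2\tilde\rho^2}(\nabla z^{\neq})^2\Bigr)-\mathbf{w}^{\neq}\cdot\nabla z^{\neq}\Bigr]dx,
\]
so that $\frac{d}{dt}E_3$ is the combination $C_{12}\eqref{E:6.27}+C_{11}\eqref{E:6.49}+\eqref{E:6.61}$ for constants to be fixed in the order $C_{11}\gg1$, then $C_{12}\gg C_{11}$, and finally $\delta+\nu$ small depending on $C_{11},C_{12}$.

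The absorption is bookkeeping once the hierarchy is chosen. Summing the left-hand dissipations gives $C_{12}\|\nabla H^{\neq},\nabla H_t^{\neq}\|_{H^1}^2+C_{11}\|\nabla\mathbf{w}^{\neq}\|_{L^2}^2+\|\nabla z^{\neq},\nabla^2\mathbf{w}^{\neq}\|_{L^2}^2$, while the collected right-hand sides are controlled term-by-term: the $\|\nabla H^{\neq}\|_{L^2}^2$ contributions (of size $C_{11}+1$) are swallowed by $C_{12}\|\nabla H^{\neq}\|_{H^1}^2$ since $C_{12}\gg C_{11}$; the $\|\nabla\mathbf{w}^{\neq}\|_{L^2}^2$ contributions (of size $1+C_{12}(\delta+\nu)$) are swallowed by $C_{11}\|\nabla\mathbf{w}^{\neq}\|_{L^2}^2$ since $C_{11}\gg1$ and $C_{12}(\delta+\nu)\ll1$; and the remaining $\|\nabla z^{\neq}\|_{L^2}^2$ and $\|\nabla^2\mathbf{w}^{\neq}\|_{L^2}^2$ contributions, all carrying a factor $(\delta+\nu)$, are swallowed by the order-one dissipations $\|\nabla z^{\neq},\nabla^2\mathbf{w}^{\neq}\|_{L^2}^2$ coming from \eqref{E:6.61} once $\delta+\nu$ is small. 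What survives is $\gtrsim\|\nabla H^{\neq},\nabla H_t^{\neq}\|_{H^1}^2+\|\nabla\mathbf{w}^{\neq}\|_{L^2}^2+\|\nabla z^{\neq},\nabla^2\mathbf{w}^{\neq}\|_{L^2}^2$, which is exactly $\|\nabla H^{\neq},\nabla H_t^{\neq},\nabla\mathbf{w}^{\neq}\|_{H^1}^2+\|\nabla z^{\neq}\|_{L^2}^2$ after recombining $\|\nabla\mathbf{w}^{\neq}\|_{L^2}^2+\|\nabla^2\mathbf{w}^{\neq}\|_{L^2}^2=\|\nabla\mathbf{w}^{\neq}\|_{H^1}^2$, yielding \eqref{E:6.67a}.

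The remaining task is the coercivity/equivalence \eqref{E:6.67av}. Reading off the definitions in \eqref{bu1}, \eqref{bu4}, \eqref{E:6.35c}, \eqref{E:6.35d}, the positive quadratic core of $E_3$ contains $(\nabla H_t^{\neq})^2$ and $\overline{(e^{E})}(H_t^{\neq})^2$ from $E_{2,1}$, $(\nabla H^{\neq})^2$ and $(\Delta H^{\neq})^2$ from $E_{1,2}$ and $E_2$, $(\mathbf{w}^{\neq})^2,(z^{\neq})^2$ from the \eqref{E:6.49}-functional, and $(\nabla\mathbf{w}^{\neq})^2,(\nabla z^{\neq})^2$ from the \eqref{E:6.61}-functional. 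Using $\overline{(e^{E})}\gtrsim1$, the elliptic bound $\|\Delta H^{\neq}\|_{L^2}\gtrsim\|\nabla^2 H^{\neq}\|_{L^2}$, and the Poincaré-type inequality \eqref{E:6.a} to recover $\|H^{\neq}\|_{L^2},\|\mathbf{w}^{\neq}\|_{L^2}$, this core is comparable to $\|\nabla H^{\neq},H_t^{\neq},z^{\neq},\mathbf{w}^{\neq}\|_{H^1}^2$. I would then show the indefinite cross terms are negligible: $\int\nabla H^{\neq}\!\cdot\!\nabla H_t^{\neq}$ and $\int e^{\tilde E}H^{\neq}H_t^{\neq}$ by Young's inequality (with \eqref{E:6.a}); $\int H^{\neq}N_{2t}^{\neq}$ and $\int\nabla H^{\neq}\!\cdot\!\mathbf{J}_1$ by the small factors $(\delta+\nu)$ from \eqref{E:6.6} and \eqref{E:6.2b}; and the term $-\int\mathbf{w}^{\neq}\!\cdot\!\nabla z^{\neq}$ by $|\int\mathbf{w}^{\neq}\!\cdot\!\nabla z^{\neq}|\lesssim\|\nabla\mathbf{w}^{\neq}\|\,\|\nabla z^{\neq}\|$ (again via \eqref{E:6.a}), absorbed because $C_{10}$ is large. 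The upper bound in \eqref{E:6.67av} follows from Cauchy--Schwarz together with the boundedness/smallness of $\tilde\rho,e^{\tilde E},N_{2t}^{\neq},\mathbf{J}_1$.

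The main obstacle I anticipate is not the absorption, which is routine once the constant hierarchy $C_{11}\ll C_{12}$ and $\delta+\nu\ll1$ is correctly ordered, but rather verifying the two-sided bound \eqref{E:6.67av}: one must check that the lower bound genuinely captures $\|\nabla^2 H^{\neq}\|_{L^2}$ (through $\|\Delta H^{\neq}\|_{L^2}$) and the full $H^1$ norms of $z^{\neq},\mathbf{w}^{\neq}$ despite the several sign-indefinite cross terms embedded in $E_1,E_2$ and in the $-\mathbf{w}^{\neq}\!\cdot\!\nabla z^{\neq}$ correction, and that all the constants can be chosen consistently so that these cross terms are dominated by the diagonal quadratic part.
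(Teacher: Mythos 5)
Your proposal is correct and follows essentially the same route as the paper, which forms exactly the combination $C_{12}\eqref{E:6.27}+C_{11}\eqref{E:6.49}+\eqref{E:6.61}$ with the hierarchy $C_{12}\gg C_{11}\gg 1$ and $\delta+\nu$ small, and defines $E_3$ as the corresponding combination of the three functionals. Your verification of the two-sided bound \eqref{E:6.67av} (dominating the indefinite cross terms in $E_{1,1}$ and $-\int_\Omega{\bf w}^{\neq}\cdot\nabla z^{\neq}\,dx$ via Young's inequality, the Poincar\'e-type bound \eqref{E:6.a}, the smallness from Lemma \ref{L:6.1}, and the largeness of $C_9, C_{10}$) is in fact more detailed than the paper's, which dismisses this step as ``easy to verify.''
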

\begin{proof}
	Taking the procedure as $C_{12}\eqref{E:6.27}+C_{11}\eqref{E:6.49}+\eqref{E:6.61}$ and using the smallness of $\delta, \nu$, we have
		\begin{align}
		&\frac{d}{dt}E_3(t)+F_3(t)
		\lesssim 0,\nonumber
	\end{align}
where 
\begin{align*}
	E_3(t)=&C_{12}(E_1(t)+C_9E_2(t))+\int_{\Omega}C_{11}\left(\frac12({\bf w}^{\neq})^2+\frac{T}{2\tilde \rho}(z^{\neq})^2\right)dx
	\notag\\
	&+\int_{\Omega}\left(\frac12(\nabla{\bf w}^{\neq})^2+\frac{T}{2\tilde \rho^2}\left(\nabla z^{\neq}\right)^2-{\bf w}^{\neq}\cdot \nabla z^{\neq}\right)dx,
	\notag\\
	F_3(t)=&\frac{(C_{12}-C_{11}-1)}{2}\|\nabla H^{\neq}, \nabla H_t^{\neq}\|_{H^1(\Omega)}^2+\frac{(C_{11}-1)}{2}\|\nabla{\bf w}^{\neq}\|_{L^2(\Omega)}^2+\frac12\|\nabla z^{\neq}, \nabla^2 {\bf w}^{\neq}\|_{L^{2}(\Omega)}^2.
\end{align*}
It is easy to verify that there exists some positive constants $C_{11}$ and $C_{12}$ such that \eqref{E:6.67av}  
and 
	\begin{align*}
	F_3(t)\gtrsim  &\|\nabla H^{\neq}, \nabla H_t^{\neq}, \nabla {\bf w}^{\neq}\|_{H^1(\Omega)}^2+\| \nabla z^{\neq}\|_{L^2(\Omega)}^2
\end{align*}
hold. Thus, the proof is completed.
\end{proof}

As shown in Lemmas \ref{L:6.3} and \ref{L:6.5}, by taking the procedure as $\int_{\Omega}\partial_x^{\alpha}\eqref{E:6.1}_2\cdot \partial_x^{\alpha}{\bf w}^{\neq}dx$ with the multi-index $|\alpha|=2$, $\int_{\Omega}\partial_x^{\beta}\eqref{E:6.1}_2 \cdot \partial_x^{\beta}\nabla z^{\neq}dx$ as well as $\int_{\Omega}\partial_x^{\beta}\left(\tilde \rho\eqref{E:6.1}_2\right)\cdot (\partial_x^{\beta}\nabla H^{\neq}+\bar C_{\beta} \cdot \partial_x^{\beta}\nabla H_t^{\neq})dx$  with the multi-index $|\beta|=1$, we further prove the following lemma.
\begin{lemma}\label{L:6.7}
Under the assumptions of Theorem \ref{theorem1.1}, it holds that
	\begin{align}\label{iu5}
		&\frac{d}{dt}E_{4}(t)+\|\nabla^2 z^{\neq}, \nabla^3 {\bf w}^{\neq}\|_{L^{2}(\Omega)}^2+\|\nabla^2 H^{\neq}, \nabla^2 H_t^{\neq}\|_{H^1(\Omega)}^2
		\lesssim \| \nabla z^{\neq}, \nabla H^{\neq}\|_{L^2(\Omega)}^2+\|\nabla {\bf w}^{\neq}\|_{H^1(\Omega)}^2,
	\end{align}
	where $E_4(t)$ satisfies 
		\begin{align}\label{iu7}
		&\|\nabla^2 z^{\neq}, \nabla^2 {\bf w}^{\neq}\|_{L^2(\Omega)}^2+\|\nabla^2 H^{\neq}, \nabla H_t^{\neq}\|_{H^1(\Omega)}^2-\|\nabla {\bf w}^{\neq}\|_{L^2(\Omega)}^2-(\delta+\nu)\|\nabla z^{\neq}\|_{L^2(\Omega)}^2
		\notag\\
		\lesssim &E_{4}(t)
		\lesssim \|\nabla^2 z^{\neq}, \nabla^2 {\bf w}^{\neq}\|_{L^2(\Omega)}^2+\|\nabla^2 H^{\neq}, \nabla H_t^{\neq}\|_{H^1(\Omega)}^2+\|\nabla {\bf w}^{\neq}\|_{L^2(\Omega)}^2+(\delta+\nu)\|\nabla z^{\neq}\|_{L^2(\Omega)}^2.
	\end{align}
\end{lemma}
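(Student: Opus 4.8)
The plan is to reproduce the scheme of Lemmas \ref{L:6.3}--\ref{L:6.5} after lifting every identity there by one spatial derivative, and then to combine the resulting inequalities with large constants. First I would apply $\partial_x^{\alpha}$ with $|\alpha|=2$ to $\eqref{E:6.1}_2$ and test against $\partial_x^{\alpha}{\bf w}^{\neq}$. The viscous term $-(\frac{1}{\rho}\Delta {\bf w})^{\neq}$ produces, after integration by parts, the dissipation $\|\nabla^3{\bf w}^{\neq}\|_{L^2(\Omega)}^2$, while the commutators from $\partial_x^{\alpha}$ hitting $\frac{1}{\tilde\rho}$ and the projected nonlinearity $(\frac{z}{\rho}\Delta {\bf w})^{\neq}$ stay of size $(\delta+\nu)$ by \eqref{E:6.2} and \eqref{qo2}, exactly as in \eqref{E:6.57}. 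The pressure term is handled as in \eqref{E:6.52}--\eqref{E:6.56}: using $\eqref{E:6.1}_1$ to rewrite $\partial_x^{\alpha}\dive {\bf w}^{\neq}$ turns it into the time derivative $\frac{d}{dt}\int_{\Omega}\frac{T}{2\tilde\rho^2}(\partial_x^{\alpha}z^{\neq})^2\,dx$ plus $(\delta+\nu)$-small remainders. This yields the second-order analog of \eqref{E:6.58}.

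Next I would recover $\|\nabla^2 z^{\neq}\|_{L^2(\Omega)}^2$ by testing $\partial_x^{\beta}\eqref{E:6.1}_2$ ($|\beta|=1$) against $\partial_x^{\beta}\nabla z^{\neq}$, mirroring \eqref{E:6.60}: the material-derivative term is converted, via $\eqref{E:6.1}_1$, into $-\frac{d}{dt}\int_{\Omega}\partial_x^{\beta}{\bf w}^{\neq}\cdot \partial_x^{\beta}\nabla z^{\neq}\,dx$ up to terms controlled by $\|\nabla^2{\bf w}^{\neq},\nabla^3{\bf w}^{\neq},\nabla^2 H^{\neq}\|_{L^2(\Omega)}^2$, and the nonlinearities $\partial_x^{\beta}{\bf h}_i^{\neq}$, $\partial_x^{\beta}N_2^{\neq}$ are absorbed by the $H^2$-bounds \eqref{E:6.2a} and \eqref{E:6.6a} of Lemma \ref{L:6.1}. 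In parallel, I would repeat the argument of Lemma \ref{L:6.3} on $\partial_x^{\beta}(\tilde\rho\,\eqref{E:6.1}_2)$, testing against $\partial_x^{\beta}\nabla H^{\neq}+\bar C_{\beta}\,\partial_x^{\beta}\nabla H_t^{\neq}$; the Poisson equation $\eqref{E:6.1}_3$ together with $\eqref{E:6.1}_1$ converts $\dive {\bf w}^{\neq}$ into time derivatives of $H^{\neq}$ and delivers the dissipation $\|\nabla^2 H^{\neq},\nabla^2 H_t^{\neq}\|_{H^1(\Omega)}^2$, the sources $N_2^{\neq},N_{2t}^{\neq},N_{2tt}^{\neq}$ being handled by \eqref{E:6.6a}--\eqref{E:6.6} and the $H^3$-bound on $H_{tt}$ from Lemma \ref{L:6.2}.

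Finally I would form $C\cdot(\text{the }H^{\neq}\text{ estimate})+C'\cdot(\text{the }{\bf w}^{\neq}\text{ estimate})+(\text{the }z^{\neq}\text{ recovery})$ with $C\gg C'\gg 1$, take $E_4(t)$ to be the resulting energy functional — which contains $\|\nabla^2 z^{\neq},\nabla^2{\bf w}^{\neq}\|_{L^2(\Omega)}^2+\|\nabla^2 H^{\neq},\nabla H_t^{\neq}\|_{H^1(\Omega)}^2$ together with the indefinite cross term $-\int_{\Omega}\partial_x^{\beta}{\bf w}^{\neq}\cdot \partial_x^{\beta}\nabla z^{\neq}\,dx$ — and absorb the $(\delta+\nu)$-small top-order terms on the left using the smallness of $\delta,\nu$. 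Reading off the explicit form of $E_4$ gives the coercivity \eqref{iu7}: the cross term and the first-order leftovers are exactly why the corrections $\|\nabla{\bf w}^{\neq}\|_{L^2(\Omega)}$ and $(\delta+\nu)\|\nabla z^{\neq}\|_{L^2(\Omega)}$ appear in those bounds. The lower-order right-hand side $\|\nabla z^{\neq},\nabla H^{\neq}\|_{L^2(\Omega)}^2+\|\nabla{\bf w}^{\neq}\|_{H^1(\Omega)}^2$ is deliberately left intact, to be closed later against the first-order estimate of Lemma \ref{L:6.6}.

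The main obstacle, as at first order but now amplified, is that the projection $(\cdot)^{\neq}$ does not commute with products, so each top-order nonlinearity — especially $\partial_x^{\alpha}(\frac{z}{\rho}\nabla z)^{\neq}$ and $\partial_x^{\alpha}(\frac{z}{\rho}\Delta {\bf w})^{\neq}$ — must first be split by decompositions of the type \eqref{E:6.3} and \eqref{po1m} so that every resulting piece carries a genuine non-zero-mode factor bounded by its own gradient through Lemma \ref{l1}, and only then estimated with the sharpened $H^2$-bounds of Lemma \ref{L:6.1}. Keeping these top-order commutators at size $(\delta+\nu)$ rather than $O(1)$ is the delicate bookkeeping that makes the full linear combination close.
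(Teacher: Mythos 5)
Your proposal is correct and follows essentially the same route as the paper, which itself only sketches this lemma by instructing the reader to repeat the procedures of Lemmas \ref{L:6.3} and \ref{L:6.5} with the three pairings $\partial_x^{\alpha}\eqref{E:6.1}_2\cdot\partial_x^{\alpha}{\bf w}^{\neq}$ ($|\alpha|=2$), $\partial_x^{\beta}\eqref{E:6.1}_2\cdot\partial_x^{\beta}\nabla z^{\neq}$ and $\partial_x^{\beta}(\tilde\rho\,\eqref{E:6.1}_2)\cdot(\partial_x^{\beta}\nabla H^{\neq}+\bar C_{\beta}\partial_x^{\beta}\nabla H_t^{\neq})$ ($|\beta|=1$). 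You in fact supply more detail than the paper does, and you correctly identify the delicate points (the non-commutativity of $(\cdot)^{\neq}$ with products, the decompositions \eqref{E:6.3} and \eqref{po1m}, and the origin of the correction terms in \eqref{iu7} from the cross term $-\int_{\Omega}\partial_x^{\beta}{\bf w}^{\neq}\cdot\partial_x^{\beta}\nabla z^{\neq}\,dx$).
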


Combining Lemma \ref{L:6.6} with Lemma \ref{L:6.7} we get
\begin{lemma}
Under the assumptions of Theorem \ref{theorem1.1}, it holds that
\begin{align}\label{E:6.77i}
	\|H^{\neq}\|_{H^4(\Omega)}^2+\|H_t^{\neq}\|_{H^3(\Omega)}^2+\|z^{\neq}, {\bf w}^{\neq}\|_{H^2(\Omega)}^2\lesssim C_0e^{-ct}
\end{align}
for some positive constant $c$.
\end{lemma}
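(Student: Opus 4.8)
The plan is to fuse the two differential inequalities of Lemmas \ref{L:6.6}--\ref{L:6.7} into a single Lyapunov functional for the non-zero mode and then close it by the Poincar\'e-type inequality \eqref{E:6.a}. Set $\mathcal{E}(t):=C_{13}E_3(t)+E_4(t)$ with $C_{13}>0$ to be chosen large. Adding $C_{13}\eqref{E:6.67a}$ to \eqref{iu5}, the forcing on the right of \eqref{iu5}, namely $\|\nabla z^{\neq},\nabla H^{\neq}\|_{L^2(\Omega)}^2+\|\nabla{\bf w}^{\neq}\|_{H^1(\Omega)}^2$, is dominated by the dissipation $C_{13}\big(\|\nabla H^{\neq},\nabla H_t^{\neq},\nabla{\bf w}^{\neq}\|_{H^1(\Omega)}^2+\|\nabla z^{\neq}\|_{L^2(\Omega)}^2\big)$ produced by $C_{13}\eqref{E:6.67a}$ once $C_{13}$ is large enough. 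This yields
\begin{align}\label{combined}
\frac{d}{dt}\mathcal{E}(t)+\mathcal{D}(t)\lesssim 0,\qquad \mathcal{D}(t)\gtrsim \|\nabla z^{\neq}\|_{H^1(\Omega)}^2+\|\nabla{\bf w}^{\neq}\|_{H^2(\Omega)}^2+\|\nabla H^{\neq},\nabla H_t^{\neq}\|_{H^2(\Omega)}^2.
\end{align}

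The decisive point is that the non-zero mode has vanishing average in $x^{\prime}$, so \eqref{E:6.a} converts gradients back into full norms and lets the dissipation control the energy itself. From the two-sided bounds \eqref{E:6.67av} and \eqref{iu7}, and using that $C_{13}E_3$ absorbs the negative lower-order terms $-\|\nabla{\bf w}^{\neq}\|_{L^2(\Omega)}^2-(\delta+\nu)\|\nabla z^{\neq}\|_{L^2(\Omega)}^2$ appearing in the coercivity bound for $E_4$, one checks that $\mathcal{E}(t)\approx\|z^{\neq},{\bf w}^{\neq},H_t^{\neq}\|_{H^2(\Omega)}^2+\|\nabla H^{\neq}\|_{H^2(\Omega)}^2$. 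Applying \eqref{E:6.a} to the lowest-order contributions (so that $\|z^{\neq},{\bf w}^{\neq},H_t^{\neq}\|_{L^2(\Omega)}^2\lesssim\|\nabla z^{\neq},\nabla{\bf w}^{\neq},\nabla H_t^{\neq}\|_{L^2(\Omega)}^2$ and $\|H^{\neq}\|_{L^2(\Omega)}^2\lesssim\|\nabla H^{\neq}\|_{L^2(\Omega)}^2$) shows $\mathcal{E}(t)\lesssim\mathcal{D}(t)$. Hence \eqref{combined} becomes $\tfrac{d}{dt}\mathcal{E}(t)+c\,\mathcal{E}(t)\lesssim0$, and a Gronwall argument gives $\mathcal{E}(t)\le\mathcal{E}(0)e^{-ct}\lesssim C_0e^{-ct}$, where $\mathcal{E}(0)\lesssim\mathbb{N}^2(0)\lesssim C_0$. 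This already yields $\|z^{\neq},{\bf w}^{\neq}\|_{H^2(\Omega)}^2+\|H^{\neq}\|_{H^3(\Omega)}^2+\|H_t^{\neq}\|_{H^2(\Omega)}^2\lesssim C_0e^{-ct}$.

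It remains to gain one extra derivative on $H^{\neq}$ and $H_t^{\neq}$ from the Poisson structure $\eqref{E:6.1}_3$. Applying elliptic regularity to $-\Delta H^{\neq}=z^{\neq}-e^{\tilde E}H^{\neq}-N_2^{\neq}$, I would estimate $\|\nabla^4 H^{\neq}\|_{L^2(\Omega)}\lesssim\|\nabla^2(z^{\neq}-e^{\tilde E}H^{\neq}-N_2^{\neq})\|_{L^2(\Omega)}\lesssim\|z^{\neq}\|_{H^2(\Omega)}+\|H^{\neq}\|_{H^2(\Omega)}+\|N_2^{\neq}\|_{H^2(\Omega)}$, where the nonlinear remainder obeys $\|N_2^{\neq}\|_{H^2(\Omega)}\lesssim(\delta+\nu)\|\nabla H^{\neq}\|_{H^1(\Omega)}$ by the second estimate in \eqref{E:6.6a}; all right-hand terms decay like $C_0e^{-ct}$, giving $\|H^{\neq}\|_{H^4(\Omega)}^2\lesssim C_0e^{-ct}$. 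For $H_t^{\neq}$ I would use the time-differentiated equation $-\Delta H_t^{\neq}=z_t^{\neq}-(e^{\tilde E}H)_t^{\neq}-N_{2t}^{\neq}$, bounding $z_t^{\neq}$ through the continuity equation $z_t^{\neq}=-\dive(\tilde\rho{\bf w}^{\neq}+{\bf J}_1)$ (so $\|z_t^{\neq}\|_{H^1(\Omega)}\lesssim\|{\bf w}^{\neq}\|_{H^2(\Omega)}+\|{\bf J}_1\|_{H^2(\Omega)}$, the latter being controlled since $H^2(\Omega)$ is an algebra in three dimensions, cf.\ Lemma \ref{L:6.1}) and $N_{2t}^{\neq}$ through \eqref{E:6.6}; elliptic regularity then upgrades the decay to $\|H_t^{\neq}\|_{H^3(\Omega)}^2\lesssim C_0e^{-ct}$. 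Collecting these bounds yields \eqref{E:6.77i}.

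The main obstacle is not the Gronwall step but verifying the coercive equivalence $\mathcal{E}(t)\approx\mathcal{D}(t)$: one must choose $C_{13}$ uniformly so that simultaneously the forcing of \eqref{iu5} and the negative lower-order pieces in \eqref{iu7} are absorbed, while the final invocation of \eqref{E:6.a} keeps $\mathcal{E}\lesssim\mathcal{D}$ intact. This closure through the non-zero-mode Poincar\'e inequality is precisely the structural mechanism that makes the exponential rate possible and renders the anti-derivative method unnecessary here. The only other delicate input is keeping the nonlinear remainders $N_2^{\neq}$ and ${\bf J}_1$ subordinate in the elliptic upgrade, which is handled by Lemma \ref{L:6.1}.
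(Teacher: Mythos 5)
Your proposal is correct and follows essentially the same route as the paper: combine \eqref{E:6.67a} and \eqref{iu5} with a large constant into a single Lyapunov functional, use the two-sided bounds \eqref{E:6.67av}, \eqref{iu7} together with the non-zero-mode Poincar\'e inequality \eqref{E:6.a} to get $\mathcal{E}(t)\lesssim\mathcal{D}(t)$, apply Gronwall, and then upgrade the regularity of $H^{\neq}$ and $H_t^{\neq}$ via the Poisson equation $\eqref{E:6.1}_3$ and the nonlinear bounds of Lemma \ref{L:6.1}. The only cosmetic difference is that you spell out the elliptic upgrade through $\|\nabla^4 H^{\neq}\|_{L^2(\Omega)}$ and the continuity equation for $z_t^{\neq}$, where the paper packages the same content as $\|\Delta H^{\neq}\|_{H^2(\Omega)}^2+\|\Delta H_t^{\neq}\|_{H^1(\Omega)}^2\lesssim\|z^{\neq},{\bf w}^{\neq}\|_{H^2(\Omega)}^2+\|\nabla H^{\neq},\nabla H_t^{\neq}\|_{H^1(\Omega)}^2$.
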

\begin{proof}
Taking the procedure as $C_{15}\eqref{E:6.67a}+\eqref{iu5}$ with some positive constant $C_{15}$ large enough yields that
\begin{align*}
\frac{d}{dt}E(t)+\|\nabla H^{\neq}, \nabla H_t^{\neq}, \nabla {\bf w}^{\neq}\|_{H^2(\Omega)}^2+\| \nabla z^{\neq}\|_{H^1(\Omega)}^2
		\lesssim 0,
\end{align*}
where $E(t):=C_{15}E_3(t)+E_4(t)$.
It follows from \eqref{E:6.67av} and \eqref{iu7} that 
\begin{align*}
\|H^{\neq}\|_{H^3(\Omega)}^2+\|H_t^{\neq}, z^{\neq}, {\bf w}^{\neq}\|_{H^2(\Omega)}^2\lesssim E(t)
\lesssim  \|H^{\neq}\|_{H^3(\Omega)}^2+\|H_t^{\neq}, z^{\neq}, {\bf w}^{\neq}\|_{H^2(\Omega)}^2.
\end{align*}
In addition, from \eqref{E:6.a} we have
\begin{align*}
\|\nabla H^{\neq}, \nabla H_t^{\neq}, \nabla {\bf w}^{\neq}\|_{H^2(\Omega)}^2+\| \nabla z^{\neq}\|_{H^1(\Omega)}^2\gtrsim \|\nabla H^{\neq}, H_t^{\neq}, z^{\neq}, {\bf w}^{\neq}\|_{H^2(\Omega)}^2\gtrsim E(t).
\end{align*}
Thus there exists a positive constant $c_1$ such that 
\begin{align*}
\frac{d}{dt}E(t)+c_1E(t)
	\leq 0,
\end{align*}
which yields that
\begin{align*}
E(t)\leq E(0)e^{-c_1t}\lesssim \nu_0 e^{-c_1t}.
\end{align*}
Moreover, from \eqref{E:6.1}-\eqref{E:6.2b} and \eqref{E:6.6a}-\eqref{E:6.6} we get
\[\|\Delta H^{\neq}\|_{H^2(\Omega)}^2+\|\Delta H_t^{\neq}\|_{H^1(\Omega)}^2\lesssim \|z^{\neq}, {\bf w}^{\neq}\|_{H^2(\Omega)}^2+\|\nabla H^{\neq}, \nabla H_t^{\neq}\|_{H^1(\Omega)}^2,\]
together with the fact that $\|\Delta H^{\neq}, \Delta H_t^{\neq}\|_{L^2(\Omega)}\gtrsim c_0 \|\nabla^2 H^{\neq}, \nabla^2 H_t^{\neq}\|_{L^2(\Omega)}$,
which implies \eqref{E:6.77i}. Thus, the proof is completed.
\end{proof}

\begin{proof}[\bf Proof of Theorem \ref{theorem1.1}]
It suffices to prove \eqref{E:1.17}. Applying the same argument in \eqref{E:6.4a}, we use \eqref{E:6.77i} to get
\begin{align*}
&\|\rho^{\neq}, {\bf u}^{\neq}\|_{W^{1, \infty}(\Omega)}+\|E_t^{\neq}\|_{W^{2, \infty}(\Omega)}+\|E^{\neq}\|_{W^{3, \infty}(\Omega)}
\notag\\
=&\|z^{\neq}, {\bf w}^{\neq}\|_{W^{1, \infty}(\Omega)}(t)+\|H_t^{\neq}\|_{W^{2, \infty}(\Omega)}(t)+\|H^{\neq}\|_{W^{3, \infty}(\Omega)} 
\notag\\
\lesssim &\|\nabla(z^{\neq}, {\bf w}^{\neq})\|_{H^1(\Omega)}^{\frac{1}{2}}+\|\nabla H_t^{\neq}\|_{H^2(\Omega)}^{\frac{1}{2}}+\|\nabla H^{\neq}\|_{H^3(\Omega)}^{\frac{1}{2}}\lesssim e^{-\frac{c_1}{4}t}.
\end{align*}
Since 
\begin{align*}
{\bf m}^{\neq}=(\rho{\bf u})^{\neq}=(\rho^{\neq}{\bf u}^{\neq})^{\neq}+\bar {\bf u}\rho^{\neq}+\bar \rho{\bf u}^{\neq},
\end{align*}
we use the boundness of $\|\rho, {\bf u}\|_{W^{1,\infty}(\Omega)}$ to deduce
\eqref{E:1.17}. Thus, the proof is completed.
\end{proof}

 \noindent
{\bf Acknowledgements\ }
X. Wu's research is supported in part by the National Natural Science Foundation of China (No. 12201649) and Hunan Provincial Natural Science Foundation of China
(2023JJ40699).


\begin{thebibliography}{99}
\bibitem{Cai-Tan}
H. Cai, Z. Tan, Existence and stability of stationary solutions to the compressible Navier-Stokes-Poisson equations. {\it Nonlinear Analysis: Real World Applications}, 32(2016), 260-293. 

\bibitem{Chae}
D. Chae, On the nonexistence of global weak solutions to the navier-stokes-poisson equations in $\mathbb{R}^N$. {\it Comm. Partial Differ. Equ.}, 35(3)(2010), 535-557. 

\bibitem{Chen} F. Chen, Introduction to Plasma Physics and Controlled Fusion, second edition, Plenum Press, 1984.

\bibitem{Cui-Gao-Yin-Zhang}
H. Cui, Z. Gao, H. Yin, P. Zhang, Stationary waves to the two-fluid non-isentropic navier-stokes-poisson system in a half line: Existence, stability and convergence rate. {\it Discrete Contin. Dyn. Syst.}, 36(9)(2016), 4839-4870. 

%


\bibitem{Donatelli-Marcati}
D. Donatelli, P. Marcati, A quasineutral type limit for the Navier-Stokes-Poisson system with large data. {\it Nonlinearity}, 21(1)(2008), 135-148.


 \bibitem{Duan-Liu}
R. Duan, S. Liu, Stability of rarefaction waves of the Navier-Stokes-Poisson system. {\it J. Differential Equations}, 258(7)(2015), 2495-2530. 

 \bibitem{Duan-Liu-Zhang}
R. Duan, S. Liu, Z. Zhang, Ion-acoustic shock in a collisional plasma. {\it J. Differential Equations}, 269(4)(2020), 3721-3768. 




\bibitem{Duan-Yang}
R. Duan, X. Yang, Stability of rarefaction wave and boundary layer for outflow problem on the two-fluid Navier-Stokes-Poisson equations. {\it Communications on Pure and Applied Analysis}, 12(2)(2013), 985-1014. 




\bibitem{D-F-P-S}
 B. Ducomet, E. Feireisl, H. Petzeltová, I. Straškraba, Global in time weak solutions for compressible barotropic self-gravitating fluids. {\it Discrete Contin. Dyn. Syst.}, 11(1)(2004), 113-130. 
 
\bibitem{Guo-Pausader}
 Y. Guo, B. Pausader, Global smooth ion dynamics in the Euler-Poisson system, {\it Commun. Math. Phys.}, 303(1)(2011), 89-125.

\bibitem{Hao-Li}
C. Hao, H. Li, Global existence for compressible Navier-Stokes-Poisson equations in three and higher dimensions. {\it J. Differential Equations}, 246(12)(2009), 4791-4812.


\bibitem{Hong-Shi-Wang}
H. Hong, X. Shi, T. Wang, Stability of stationary solutions to the inflow problem for the two-fluid non-isentropic Navier-Stokes-Poisson system. {\it J. Differential Equations}, 265(4)(2018), 1129-1155. 



\bibitem{Hsiao-Li-Yang-Zou}
L. Hsiao, H. Li, T. Yang, C. Zou, Compressible non-isentropic bipolar navier-stokes-poisson system in $\mathbb{R}^3$. {\it Acta Mathematica Scientia}, 31(6)(2011), 2169-2194. 

\bibitem{Jang-Tice}
J. Jang, I. Tice, Instability theory of the Navier-Stokes-Poisson equations. {\it Analysis and PDE}, 6(5)(2013), 1121-1181.


\bibitem{Jiang-Lai-Yin-Zhu}
M. Jiang, S. Lai, H. Yin, C. Zhu, The stability of stationary solution for outflow problem on the Navier-Stokes-Poisson system. {\it Acta Mathematica Scientia}, 36(4)(2016), 1098-1116. 


\bibitem{Kobayashi-Suzuki}
T. Kobayashi, T. Suzuki, Weak solutions to the Navier-Stokes-Poisson equation. {\it Adv. Math. Sci. Appl.}, 18(1)(2008), 141-168.

\bibitem{Krall}
 N. Krall, A. Trivelpiece, Principles of Plasma Physics, McGraw-Hill, 1973.
 
  \bibitem{Kundu-Ghosh-Chatterjee-Das}
S. Kundu,  D. Ghosh, P. Chatterjee, B. Das, Shock waves in a dusty plasma with positive and negative dust, where electrons are superthermally distributed. {\it Bulg. J. Phys.}, 38(2011), 409-419.

 \bibitem{Li-Matsumura-Zhang}
H. Li, A. Matsumura, G. Zhang, Optimal decay rate of the compressible Navier-Stokes-Poisson system in $\mathbb{R}^3$. {\it Arch. Rational Mech. Anal.}, 196(2)(2010), 681-713. 

\bibitem{Li-Yang-Zou}
H. Li, T. Yang, C. Zou, Time asymptotic behavior of the bipolar Navier-Stokes-Poisson system. {\it Acta Mathematica Scientia}, 29(6)(2009), 1721-1736. 

\bibitem{Li-Zhu}
Y. Li, P. Zhu, Asymptotics toward a nonlinear wave for an outflow problem of a model of viscous ions motion. {\it Math. Models Methods Appl. Sci.}, 27(11)(2017), 2111-2145. 

\bibitem{Liu-Luo-Zhong}
H. Liu,T. Luo, H. Zhong, Global solutions to compressible Navier-Stokes-Poisson and Euler-Poisson equations of plasma on exterior domains. {\it J. Differential Equations}, 269(11)(2020), 9936-10001. 

 
 
\bibitem{Matsumura-Nishida}
A. Matsumura, T. Nishida, Initial boundary value problems for the equations of motion of compressible viscous and heat-conductive fluids. {\it Commun. Math. Phys.}, 89(4)(1983), 445-464. 



\bibitem{Ruan-Yin-Zhu}
L. Ruan, H. Yin, C. Zhu, Stability of the superposition of rarefaction wave and contact discontinuity for the non-isentropic Navier-Stokes-Poisson system. {\it Mathematical Methods in the Applied Sciences}, 40(7)(2017), 2784-2810. 



\bibitem{Smoller}
J. Smoller, Shock waves and reaction-diffusion equations. Second edition. Grundlehren der mathematischen Wissenschaften [Fundamental Principles of Mathematical Sciences], 258(1994). Springer-Verlag, New York, xxiv+632 pp. 

 

\bibitem{Tan-Wang-Wang}
Z. Tan, Y. Wang, Y. Wang, Stability of steady states of the Navier-Stokes-Poisson equations with non-flat doping profile. {\it SIAM J. Math. Anal.}, 47(1)(2015), 179-209.

\bibitem{Tan-Yang-Zhao-Zou}
Z. Tan, T. Yang, H. Zhao, Q. Zou, Global solutions to the one-dimensional compressible Navier-Stokes-Poisson equations with large data. {\it SIAM J. Math. Anal.}, 45(2)(2013), 547-571.


\bibitem{Wang-Wu}
W. Wang, Z. Wu, Pointwise estimates of solution for the Navier-Stokes-Poisson equations in multi-dimensions. {\it J. Differential Equations}, 248(7)(2010), 1617-1636. 

 \bibitem{Yuan}
 Q. Yuan, Nonlinear asymptotic stability of planar viscous shocks for 3D compressible Navier-Stokes equations with periodic perturbations. Arxiv:2022003.




\bibitem{Zhang-Li-Zhu}
G. Zhang, H. Li, C. Zhu, Optimal decay rate of the non-isentropic compressible Navier-Stokes-Poisson system in $\mathbb{R}^3$. {\it J. Differential Equations}, 250(2011), 866-891.



\bibitem{Zhang-Tan}
Y. Zhang, Z. Tan, On the existence of solutions to the Navier-Stokes-Poisson equations of a two-dimensional compressible flow. {\it Mathematical Methods in the Applied Sciences}, 30(3)(2007), 305-329. 

\end{thebibliography}
     \end{document}